\newcommand{\lb}{\label}
\newcommand{\beq}{\begin{equation}}
\newcommand{\eeq}{\end{equation}}
\newtheorem{theorem}{Theorem}[section]
\newtheorem{proposition}[theorem]{Proposition}
\newtheorem{lemma}[theorem]{Lemma}
\newtheorem{corollary}[theorem]{Corollary}
\newtheorem{definition}[theorem]{Definition}
\theoremstyle{remark}
\newtheorem{remark}[theorem]{Remark}
\definecolor{darkgreen}{rgb}{0,0.4,0}
\def \l {\left(}
\def\r {\right)}
\numberwithin{equation}{section}
\def\XXint#1#2#3{{\setbox0=\hbox{$#1{#2#3}{\int}$ }
\vcenter{\hbox{$#2#3$ }}\kern-.6\wd0}}
\def \l {\left(}
\def\r {\right)}
\newcommand{\bbN}{{\mathbb{N}}}
\newcommand{\bbR}{{\mathbb{R}}}
\newcommand{\bbZ}{{\mathbb{Z}}}
\newcommand{\bbT}{{\mathbb{T}}}
\newcommand{\calL}{{\mathcal L}}
\newcommand{\calD}{{\mathcal D}}
\newcommand{\loc}{{\textup{loc}}}
\newcommand{\eps}{\varepsilon}
\title[Free boundary Regularity]{Free Boundary Regularity of the Porous Medium Equation with nonlocal drifts in Dimension One}
\author[Yuming Paul Zhang]{\bfseries  Yuming Paul Zhang}
\address{
(Y.P. Zhang) Department of Mathematics \\ 
University of California   \\ 
San Diego\\
USA}
\email{yzhangpaul@ucsd.edu}
\begin{document}

\vspace{18mm} \setcounter{page}{1} \thispagestyle{empty}

\begin{abstract}
We study the free boundary of the porous medium equation with nonlocal drifts in dimension one. Under the assumption that the initial data has super-quadratic
growth at the free boundary, we show that the solution is smooth in space and $C^{2,1}_{\loc}$ in time, and then the free boundary is $C^{2,1}_{\loc}$. Moreover if the drift is local, both the solution and the free boundary are smooth.
\end{abstract}

\maketitle

\vspace{.1cm}
\noindent{\small {\bf Keywords:} degenerate diffusion; nonlocal drift; free boundary regularity; non-degeneracy; 
smoothness.}

\vspace{.1cm}
\noindent{\small  {\bf 2020 Mathematics Subject Classification}: 35R35, 35K65, 35B65, 45K05.}

\bigskip

\section{Introduction}

In this paper, we study the free boundary problem of the following equation in space dimension one
\begin{equation}\label{main}
\left\{\begin{aligned}
&\varrho_t=\l \varrho^m\r_{xx}+(\varrho V+\varrho \, W*\varrho)_x &\text{ in }&\mathbb{R}\times \bbR^+, \\
& \varrho(x,0)=\varrho_{0}(x) &\text{ on }& \mathbb{R},
\end{aligned}
\right.
\end{equation}
where ${V},W$ are two smooth vector fields in $\mathbb{R}\times \mathbb{R}^+$ and $m>1$. The initial data $\varrho_0$ is non-negative, bounded and compactly supported. 

The nonlinear diffusion in \eqref{main} represents an anti-congestion effect. 
When $W\equiv 0$, the system models a gas flow in one direction in a 3-dimensional Euclidean space filled with homogeneous porous medium and $\varrho$ is the gas density, \cite{caffarelli1979regularity}. The vector field $-{V}$ corresponds to an external force (e.g. wind) acting on the flow \cite{BGHP,kimlei,carrillo2001entropy,chayes2013aggregation}.  
In mathematical biology, the nonlocal term involving $W$ appears in the model of chemotaxis, angiogenesis and motion of animal crowds, where the behavior of agents are largely driven by interaction forces, due to chemical or social effects \cite{TBL,carrillo2001entropy,carrillo2019aggregation,carrillo2019,blanchet2009critical}. The equation covers an important class of aggregation-diffusion equations with smooth kernels.



\medskip

The \textit{finite propagation} property is one well-known feature of the nonlinear diffusion, see section 14 \cite{book}. The property implies that if the non-negative solution $\varrho$ is initially compactly supported, then $\varrho$ stays compactly supported for all finite time. Therefore it makes sense to consider the solution's \textit{free boundary} (or \textit{interface}) that separates the region where there is gas ($\varrho>0$) from the void place ($\varrho=0$). 

Note that if the solution $\varrho$ is strictly positive, \eqref{main} is a uniformly parabolic equation and the nonlocal drift inherits the regularity of $\varrho$. Hence by iteration $\varrho$ is smooth. So problems occur near the free boundary. 
The goal of the paper is to study the regularity property of both the solution and the free boundary of \eqref{main}.

\medskip

Let us briefly discuss the regularizing mechanism of the interface. 
We start with rewriting the equation in the form of a continuity equation,
\begin{equation*}
\varrho_t+(\varrho \,(-u_x+{B}) )_x=0
\end{equation*}
where $u:=\frac{m}{m-1}\varrho^{m-1}$ is the pressure variable and 
\begin{equation}\label{B}
{B}:=-\l V+W*\varrho \r(x,t).
\end{equation}
Notice that the support of $\varrho$ is the same as the support of its pressure. So it suffices to consider $u$, and the discussions below are for general dimensions. 

In dimension $d$, the pressure variable $u$ satisfies
\begin{equation}\label{premain}
\left\{\begin{aligned}
&u_t=(m-1) u\,\Delta u+|\nabla u|^2-\nabla u\cdot{B}-(m-1)u \,\nabla\cdot{B} &\text{ in }&\mathbb{R}^d\times \bbR^+, \\
& u(x,0)=u_{0}(x) &\text{ on }& \mathbb{R}^d,
\end{aligned}
\right.
\end{equation}
where $u_0:=\frac{m}{m-1}\varrho_0^{m-1}$.
Formally, the speed of the free boundary to its outer normal direction (which equals $\frac{-\nabla u}{|\nabla u|}$) is the same as $\frac{u_t}{|\nabla u|}$. Let us denote the free boundary at time $t$ as $\Gamma_t(u):=\partial\{u(\cdot,t)>0\}$. If $u\Delta u=0$ at a free boundary point which can be induced from
\begin{equation}\label{cond: fund esti}
\Delta u(x,t)> -\infty \quad\text{ in }\quad\mathbb{R}^d\times\bbR^+,
\end{equation}
then \eqref{premain} implies that the velocity of the free boundary equals
\begin{equation}
    \label{1.7}
\frac{u_t}{|\nabla u|} 
=|\nabla u|- \frac{\nabla u\cdot B}{|\nabla u|} 
\end{equation}
since $u=0$ on $\Gamma_t(u)$.
Here
\eqref{cond: fund esti} is often referred to as the \textit{fundamental estimate} and \eqref{1.7} is called \textit{Darcy's law}.
Now if the free boundary is  \textit{non-degenerate}:
\begin{equation}
    \label{nondeg}
    \liminf_{y\to x}|\nabla u|(y,t)>0\quad\text{ for }\quad x\in\Gamma_t(u),
\end{equation}
in view of \eqref{1.7}, the free boundary expands with a positive speed relatively to $B$. This movement is expected to regularize the free boundary since it comes from the diffusion, and this is also strongly bonded to the regularity of the solution.
Both \eqref{cond: fund esti} and \eqref{nondeg} are crucial to the free boundary problem. 

\medskip

When ${V}=W\equiv 0$, the equation for all dimensions is the well-known {\it Porous Medium Equation} $(PME)$ and there is an extensive literature studying the regularity properties of the free boundary. 
The fundamental estimate of $(PME)$ is due to Aronson and Benilan \cite{fundamentalest}. Caffarelli and Friedman \cite{CFregularity} 
have shown that the solution's free boundary can be described by $t=S(x)$ where $S$ is H\"{o}lder continuous if $u_0$ satisfies \eqref{1.6} i.e. $u_0$ grows slightly faster than the quadratic-growth near the boundary. 
Later Caffarelli, V\'{a}zquez and Wolanski \cite{CVWlipschitz} prove that after a finite time, the free boundary is a Lipschitz continous $d$-dimensional surface, and furthermore if
\eqref{cond: fund esti} and \eqref{nondeg} hold at $t=0$,
the free boundary is non-degenerate for all time.
Based on non-degeneracy, Caffarelli and Wolanski proved that the interface is a $C^{1,\alpha}$ surface in \cite{C1alpha}.
Later Koch \cite{koch}, Aronson and V\'{a}zquez \cite{LV12} improved the regularity: the solution is actually smooth uniformly up to the free boundary and the free boundary is a smooth surface after the finite time. 
In dimension one, these results are known earlier in \cite{LV12,LV30,LV1}. In \cite{LV}, Lee and V\'{a}zquez found that in general dimensions the solution becomes concave in finite time. More recently
Kienzler, Koch and V\'{a}zquez \cite{kienzler2018flatness} proved that, without assuming non-degeneracy on the initial data, flatness of the solution implies smoothness of both the solution and the free boundary.


\medskip

Regarding the equation with drifts, well-posedness of \eqref{main} is established in \cite{bertozzi2009existence,bertsch,carrillo2001entropy}. If $V,W$ are potential vector fields, the equation shares the feature of being a gradient flow of a free energy functional, as discussed extensively in the literature ( \cite{ambrosio2008gradient,carrillo2019aggregation,zhang2018continuity,carrillo2019} etc.). We only mentioned a small portion of works in this active field of research and the topics range from regularity, asymptotics, singular limits, vanishing viscosity and phase transition etc. For the case when $V$ is smooth and $W\equiv 0$, it was proved in \cite{dib83,di1982continuity} that the solution becomes H\"{o}lder continuous instantaneously after time $0$. In view of \cite{hwang2019continuity,zhang2017regularity}, $V\in  {L^p_xL^q_t}$ locally  with $\frac{d}{p}+\frac{2}{q}>1$ is enough to deduce the continuity. 

To our knowledge, the regularity of the free boundary is widely open even for $W\equiv 0$. 
The presence of drifts poses significant challenges and it typically implies that one cannot rely on the classical results. 
Even for travelling wave solutions in space dimension $2$ with a smooth divergence-free drift, an interesting numerical experiment by Monsaigeon \cite{numerics} suggests the possibility of singular free boundary (while the travelling wave solutions of $(PME)$ are of the form $(x_1+ct)_+$ and the free boundary is just hyperplanes). The analysis of this observation is lacking, and we only know from \cite{traveling} the existence of Lipschitz traveling wave solution.
 When $-V$ is a convex potential vector field, Kim and Lei \cite{kimlei} showed the exponential convergence rate of the free boundary to the one of an equilibrium. The author and Kim \cite{kim2018fb} showed that in general dimensions with general smooth $V$ and $W\equiv 0$, the free boundary is locally non-degenerate and then $C^{1,\alpha}$ under a cone monotonicity condition on the solution and an upper bound on $u_t-C|\nabla u|$. 

This paper reports the first attempt to characterize the free boundary regularity in the presence of both local and nonlocal drifts. By making use of the huge advantage of space dimension one, we are able to obtain a Lipschitz regularity of the pressure variable (which is not known in general dimensions), and some regularity property of the nonlocal drifts. Next, since in dimension one the free boundary is just a collection of functions of time, we do not need the cone monotonicity condition assumed in \cite{kim2018fb} to prevent sudden topological changes in the free boundary. With the obtained regularity of the solution and the drift, we modify the method that is introduced in \cite{caffarelli1979regularity} to obtain the non-degeneracy under the assumption that the free boundary is strictly expanding relatively to the streamlines. 
Eventually,
we show that the solution is $C^{2,1}_{\loc}$ in time and smooth in space uniformly up to the free boundary, and the free boundary is $C^{2,1}_{\loc}$. 


\subsection{Our results}

The goal of this paper is to study the regularity property of both the solution and the free boundary of \eqref{main} in space dimension one.
We first show the following regularity (of the solution and the drift) that is uniform in $\bbR$ and locally uniform in time. 
Throughout the paper, we will assume that $V$ and $W$ are bounded in $C_{x,t}^k(\bbR\times [0,\infty))$ for each $k\geq0$.

\begin{theorem}\label{T.1.1}
{\rm [Lemmas~\ref{L.3.1}, \ref{L.3.2}]}
Let $\varrho$ be the solution to \eqref{main} with bounded, non-negative, compactly supported initial data $\varrho_0$, and with $V,W$ bounded in $ C^\infty_{x,t}(\bbR\times[0,\infty))$. Let $u$ be the pressure.
Then for any $T>\tau>0$,
there exists a constant $C=C(\tau,T)$ such that
\[\|u_x\|_{L^\infty(\mathbb{R}\times [\tau,T])}+\|u_t\|_{L^\infty(\mathbb{R}\times [\tau,T])}\leq C.\]
Let $B=B(x,t)$ be as given in \eqref{B}. For any $k\geq 0$, there exists $C'=C'(\tau,T,k)$ such that
\[\|B\|_{C^k_xC^{1,1}_t(\mathbb{R}\times [\tau,T])} \leq C'.\]

\end{theorem}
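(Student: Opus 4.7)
The plan is to obtain Lipschitz regularity of the pressure $u$ via an Aronson--B\'enilan-type fundamental estimate adapted to the drift, then to exploit the smoothness of the kernel $W$ and finite propagation to upgrade this to the asserted $C^k_x C^{1,1}_t$ regularity of $B$.

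I first record two a priori facts. Writing \eqref{main} as a continuity equation with bounded transport velocity $-u_x+B$ and applying the standard maximum principle gives $\|\varrho\|_{L^\infty(\mathbb{R}\times[0,T])}\leq e^{CT}\|\varrho_0\|_\infty$ with $C$ depending on $\|B_x\|_\infty$, so $u=\tfrac{m}{m-1}\varrho^{m-1}$ is uniformly bounded; finite propagation (applicable since $\|B\|_{L^\infty_{x,t}}\leq C$ on $[0,T]$) keeps $\operatorname{supp}\varrho(\cdot,t)$ in a fixed compact interval. Next I run the Aronson--B\'enilan argument on $P:=u_{xx}$ in the pressure equation \eqref{premain}: the drift contribution $-\partial_x^2(u_x B+(m-1)u B_x)$ is a bounded perturbation, its size requiring only $\|\partial_x^k B\|_\infty$, which itself reduces to $\|\partial_x^k(W*\varrho)\|_\infty\leq \|\partial_x^k W\|_\infty\|\varrho\|_{L^1}$. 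A parabolic maximum principle applied to $tP$ yields $u_{xx}\geq -C(1+1/t)$, and combined with $u\geq 0$ and $\|u\|_\infty\leq C$, the elementary pointwise argument (if $|u_x(x_0,t_0)|$ were large, integrating $u_{xx}\geq -C(\tau)$ twice would force $u$ to exceed its sup-norm near $x_0$) gives $\|u_x\|_{L^\infty(\mathbb{R}\times[\tau,T])}\leq C(\tau,T)$.

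For $u_t$ the lower bound comes from the B\'enilan--Crandall estimate $u_t\geq -u/((m-1)t)-C$, which persists under the addition of the bounded drift. The upper bound is the delicate one: I apply the parabolic maximum principle directly to $P:=u_t$, whose evolution equation (obtained by differentiating \eqref{premain} in $t$) has leading term $(m-1)u P_{xx}$, first-order coefficient bounded by $\|u_x\|_\infty+\|B\|_\infty$, zero-order coefficient $(m-1)(u_{xx}-B_x)\geq -C(\tau)$ by the previous step, and forcing term bounded in terms of $\|B_t\|_\infty, \|u_x\|_\infty, \|B_{xt}\|_\infty, \|u\|_\infty$; comparison with a super-solution of the form $C(1+1/t)$ on $[\tau,T]$ then yields $\|u_t\|_{L^\infty(\mathbb{R}\times[\tau,T])}\leq C(\tau,T)$.

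For the drift regularity, spatial derivatives fall harmlessly on the kernel, $\partial_x^k(W*\varrho)=(\partial_x^k W)*\varrho$, bounded uniformly in terms of $\|\partial_x^k W\|_\infty\|\varrho\|_{L^1}$. For time regularity I pass $\partial_t$ inside the convolution and integrate by parts against the equation for $\varrho$ (legitimate by compact support):
\begin{equation*}
\partial_t(W*\varrho)(x,t) = (W_{xx}*\varrho^m)(x,t) - \bigl(W_x*(\varrho V+\varrho\, W*\varrho)\bigr)(x,t),
\end{equation*}
which is bounded since each factor is. Differentiating once more in time and using the identity $(\varrho^m)_t=\varrho\,u_t$ (bounded by the previous step), together with analogous rewritings of $(\varrho V)_t$ and $(\varrho W*\varrho)_t$ obtained by another PDE-integration by parts, gives a uniform bound on $\partial_t^2(W*\varrho)$, which is precisely the $C^{1,1}_t$ claim; the bound after $k$ spatial derivatives is identical, since $\partial_x$ commutes with both $\partial_t$ and convolution. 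The main obstacle is the upper bound on $u_t$: standard Aronson--B\'enilan gives only one-sided control of $u_{xx}$, so bounding $u\cdot u_{xx}$ from above is the essential difficulty, and the drift terms must be handled without presupposing regularity of $u$ not yet established. The key to avoiding circularity is that spatial regularity of $W*\varrho$ depends only on $\|\varrho\|_\infty$ and its compact support, so in the AB-type computations the coefficients involving $B$ may be treated as \emph{given} smooth functions.
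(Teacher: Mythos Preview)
Your argument for the upper bound on $u_t$ has a genuine gap. Differentiating the pressure equation in $t$ gives
\[
P_t=(m-1)u\,P_{xx}+(2u_x-B)P_x+(m-1)(u_{xx}-B_x)\,P-u_xB_t-(m-1)uB_{xt},
\]
and you correctly note that the zero-order coefficient $(m-1)(u_{xx}-B_x)$ is bounded \emph{below} by the fundamental estimate. But for a comparison with a supersolution $w=C(1+1/t)$ (constant in $x$) you would need $w_t-(m-1)(u_{xx}-B_x)\,w\geq f$, and this requires an \emph{upper} bound on $u_{xx}$, which you do not have. Equivalently, if you eliminate $u_{xx}$ using the equation $u_{xx}=\tfrac{P-|u_x|^2+u_xB+(m-1)uB_x}{(m-1)u}$, the zero-order term becomes $+P^2/u$ with the wrong sign: at a large positive maximum of $P$ this term drives $P_t$ upward, so no bound follows.

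The paper circumvents this by working not with $P=u_t$ but with the combination $\varphi:=(m-1)u\,u_{xx}+m\,u_x^2=u_t+(m-1)u_x^2+u_xB+(m-1)uB_x$. The extra $u_x^2$ term changes the algebra: after substituting $(m-1)u_{xx}=(\varphi-mu_x^2)/u$, the equation for $\varphi$ takes the form $\varphi_t-(m-1)u\,\varphi_{xx}-(2u_x-B)\varphi_x=-\tfrac{1}{u}(\varphi-f_1)(\varphi-f_2)+f_3$ with $f_1,f_2,f_3$ bounded by $C(1+1/t)$. The crucial sign is now $-\varphi^2/u$, which gives the absorption needed to compare with $w=C_2(1+1/t)$. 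Your treatment of the fundamental estimate, the bound on $|u_x|$, and the spatial and first-time-derivative regularity of $B$ via integration by parts is fine and matches the paper in spirit; the one missing idea is this choice of auxiliary quantity for the $u_t$ upper bound.
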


The obtained regularity of $B$ is essential for establishing non-degeneracy later.
The spatial regularity of $B$ follows immediately from the regularity of $V,W$ and that the solution is compactly supported. However the bounds on $B_t$ and $B_{tt}$ are more delicate and require more careful treatment because
\[
B_t=-\l V_t+W_t*\varrho+W* \varrho_t \r
\]
and it is possible that when $m>2$
\[
\varrho_t=\frac{1}{m}\l\frac{m-1}{m}u \r^\frac{2-m}{m-1} u_t
\]
is singular at $u=0$ even with smooth $u$.
Enlightened by Theorem 15.6 \cite{book}, we overcome this problem by proving that $\varrho_t(\cdot,t)\in L^p$ for some $p$ and for a.e. $t>0$. Using this identification of $\varrho_t(\cdot,t)$, we obtain that $B$ is Lipschitz continuous in time, and combining this with the equation \eqref{main}, the Lipschitz bound of $B_t$ follows. We will explain the obstacle that prevents us from further improving the regularity of $B$ after Theorem \ref{T.1.4}.

For solutions without compact support, to obtain the regularity, we only need to further assume some integrability condition on $W$ and derivatives of $W$, see Remark \ref{R.3.3}. The same applies to Theorems \ref{T.1.2}-\ref{T.1.4} below. For simplicity we mainly  discuss solutions with compactly supported initial data in the paper.

\medskip

Now we proceed to study the free boundary's regularity. We will firstly show that the free boundary is Lipchitz continuous and then justify Darcy's law in Lemma \ref{L.4.2}.

As suggested in \eqref{1.7}, the gas diffuses and at the same time flows along the drift. So it is important to consider the \textit{streamline} $X(x_0,t_0;t)$ that is defined to be the integral curve along the vector field $B$ starting at $(x_0,t_0)$ for a time period $t$ i.e.
$X(t):=X(x_0,t_0;t)$ is the unique solution to
\begin{equation}\label{4.1}
    \left\{\begin{aligned}
    &\partial_t X(t)=B(X(t),t_0+t)\\
    &X(0)=x_0.
    \end{aligned}\right.
\end{equation}
For $(PME)$ the positive zone of solutions is non-contracting, while when there is a drift, 
the positive zone is non-contracting relatively to streamlines. Actually we have the following stronger alternative result.
We use the notation
\[r(t):=\sup\{x\,|\, u(x,t)>0\}\]
as the right-hand side free boundary of $u$.


\begin{proposition}\label{P.1.2}
Let $x_0=r(t_0)$ for some $t_0>0$. Then either of the following holds:
\begin{itemize}
    \item[\normalfont{(i)} ] $r(t)=X(x_0,t_0;t-t_0)$ for all $t\in [0,t_0]$;
    \medskip
    \item[\normalfont{(ii)} ] $r(t)>X(x_0,t_0;t-t_0)$ for all $t>t_0$ and $r(t)<X(x_0,t_0;t-t_0)$ for all $t\in [0,t_0)$.
\end{itemize}
Moreover, if the initial data satisfies
\begin{equation}\label{1.6}
    u_0(x)\geq c (r(0)-x)_+^\gamma \quad\text{ for some }c>0,\gamma \in (0,2) \text{ and all }|x-r(0)|<c,
\end{equation}
then {\normalfont{(ii)}} happens for all boundary points.
\end{proposition}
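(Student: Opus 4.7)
The plan combines Darcy's law (Lemma~\ref{L.4.2}) for the free boundary velocity, ODE comparison with the streamline $X_t := X(x_0, t_0; t - t_0)$, and a persistence-of-non-degeneracy input propagating the regime $|u_x(r(t)^-, t)| > 0$ forward in time. First I would use $r'(t) = |u_x(r(t)^-, t)| + B(r(t), t) \ge B(r(t), t)$ together with the spatial Lipschitz bound on $B$ from Theorem~\ref{T.1.1} and ODE comparison with \eqref{4.1} to derive $r(t) \ge X(r(s), s; t-s)$ for $s \le t$; by non-crossing of trajectories of \eqref{4.1} this is equivalent to $r(t) \le X(r(s), s; t-s)$ for $t \le s$. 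Specializing $s = t_0$ yields $r \ge X_\cdot$ on $[t_0, \infty)$ and $r \le X_\cdot$ on $[0, t_0]$, with equality at $t_0$.

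Next, writing $L$ for the spatial Lipschitz constant of $B$ and setting $\Phi^{\pm}(t) := (r(t) - X_t) e^{\pm L t}$, a direct computation using $B(r(t),t) - B(X_t,t) \ge -L|r(t) - X_t|$ gives
\begin{equation*}
(\Phi^+)'(t) \ge e^{Lt} |u_x(r(t)^-, t)| \quad \text{on } [t_0, \infty), \qquad (\Phi^-)'(t) \ge e^{-Lt} |u_x(r(t)^-, t)| \quad \text{on } [0, t_0],
\end{equation*}
so both $\Phi^{\pm}$ are nondecreasing with $\Phi^{\pm}(t_0) = 0$. Since $\Phi^+ \ge 0$ and $\Phi^- \le 0$ on the respective intervals, if $r(t_1) = X_{t_1}$ for some $t_1 > t_0$ or $r(t_2) = X_{t_2}$ for some $t_2 < t_0$, the corresponding $\Phi^{\pm}$ is forced to vanish throughout $[t_0, t_1]$ or $[t_2, t_0]$; in particular $r \equiv X_\cdot$ and $|u_x(r(t)^-, t)| \equiv 0$ on that interval.

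To conclude the dichotomy it remains to extend such a touching interval all the way to $t = 0$. This reduces to showing that the set $\{t \ge 0 : |u_x(r(t)^-, t)| = 0\}$ is an initial interval, i.e., a persistence-of-non-degeneracy statement: once the boundary moves strictly faster than the local streamline, it continues to do so. I would prove this via an intersection-comparison argument in the streamline-adjusted coordinates $y = x - X_t$, in which the pressure equation \eqref{premain} near the free boundary becomes a one-dimensional $(PME)$ with a bounded zero-order perturbation, so that the classical Aronson-Caffarelli / Caffarelli-Friedman waiting-time analysis applies. I expect this persistence step to be the main technical obstacle, while the earlier steps are essentially one-line ODE / Gronwall arguments.

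Finally, the moreover statement under \eqref{1.6} follows because the hypothesis $u_0(x) \ge c(r(0) - x)_+^\gamma$ with $\gamma < 2$ is precisely the Caffarelli-Friedman sufficient condition for zero waiting time for $(PME)$. Transplanted to the streamline-adjusted frame at each boundary point $x = r(0)$, it permits the construction of an explicit Barenblatt-type subsolution fitting beneath $u_0$ whose free boundary strictly advances past the streamline for every $t > 0$. Comparison then gives $r(t) > X(x, 0; t)$ at every $x = r(0)$ for all $t > 0$, which excludes case (i) via the dichotomy and therefore forces case (ii) at every boundary point.
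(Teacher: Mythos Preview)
Your strategy differs from the paper's. The paper invokes Theorem~1.2 of \cite{kim2018fb}, which supplies at each boundary point $(r(t_0),t_0)$ a strong local dichotomy: either the backward streamline lies on $\Gamma$ for all $s\in[0,t_0]$, or there is a quantitative separation $r(t_0+s)>X(s)$ and $r(t_0-s)<X(-s)$ (in fact with a power-law gap $\sim s^\beta$) for all small $s>0$; the global statement then follows by iteration together with Lemma~\ref{L.4.1}. Your route via Darcy's law and the Gr\"onwall functions $\Phi^\pm$ correctly handles the soft comparison $r\gtrless X$ and the ``touching interval'' structure, but the step you flag as the main obstacle is a genuine gap.

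Specifically, the persistence statement you need --- ``$|u_x(r(s)^-,s)|>0$ implies $|u_x(r(t)^-,t)|>0$ for all $t>s$'' --- is exactly Theorem~\ref{T.5.1}, whose proof \emph{uses} Proposition~\ref{P.1.2} (through the strict inequality $r(t+h)>X(r(t),t;h)$ at points where $k(t)=0$; see \eqref{5.15}). You therefore cannot quote it here, and your proposed independent argument does not work as described: in the frame $y=x-X_t$ the pressure equation reads
\[
v_\tau=(m-1)v\,v_{yy}+v_y^2-v_y\tilde B-(m-1)v\,\tilde B_y,\qquad \tilde B(y,\tau):=B(y+X_\tau,\tau)-B(X_\tau,\tau),
\]
which still carries the first-order drift $-v_y\tilde B$ (only $\tilde B(0,\tau)=0$), not merely a bounded zero-order perturbation. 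The Caffarelli--Friedman waiting-time and persistence machinery for $(PME)$ is therefore not directly applicable; one must rebuild the barriers with the drift present, which is precisely the substantive content of \cite{kim2018fb} (for the dichotomy and for \eqref{1.6}) and of the paper's Section~\ref{s.4} (for persistence). The same objection applies to your Barenblatt-subsolution argument for the ``moreover'' clause.
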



The proposition is a direct corollary of \cite[Theorem 1.2]{kim2018fb}, and we will sketch the proof in the appendix.

The alternative (i) corresponds to a relative waiting time phenomena. The second alternative shows that the support $\{u(\cdot,t)>0\}$ is always strictly expanding relatively to the streamlines once it starts strictly relatively expanding. The growth condition \eqref{1.6} prevents waiting times and forces the free boundary to start moving (relatively to the streamlines) immediately.  
The condition \eqref{1.6} is optimal in the sense that there are stationary solutions to \eqref{main} with quadratic growth at the free boundary (see the proof of Theorem \ref{T.1.3}).

As discussed before, we want the free boundary not only to strictly expand but also to have a positive expanding speed (which is the non-degeneracy) since we expect that regularization comes from movement.
In \cite{kim2018fb}, the non-degeneracy for general dimensions is obtained for type (ii) free boundary with an extra monotonicity assumption which is satisfied in a traveling wave type setting. Nevertheless it is not easy to verify the monotonicity assumption in other general situations even in dimension one, and proving non-degeneracy is still a hard problem. We solved this problem in space dimension one by taking a different approach. We are able to improve type (ii) free boundary to non-degenerate free boundary that corresponds to a positive sign of $-D_x^- u(r(t),t)$ by \eqref{nondeg}. For simplicity, we assume  \eqref{1.6}.

\begin{theorem}\label{T.1.2}
{\rm [Lemma~\ref{L.4.2}, Corollary~\ref{C.5.2}]}
Let $u$ be given as in Theorem \ref{T.1.1}. Then $-D_x^- u(r(t),t):=-\lim_{x\to r(t)^-}u_x(x,t)$ exists for all $t>0$. If in addition \eqref{1.6} holds, then   $-D_x^- u(r(t),t)$ is positive for all $t>0$.


\end{theorem}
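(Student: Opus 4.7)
The plan splits according to the two claims in the theorem.

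\textbf{Existence of $-D_x^-u(r(t),t)$.} I would deduce this from a one-sided second-derivative bound of Aronson--B\'{e}nilan type for the pressure, which should come out of the same arguments that produce Lemma~\ref{L.3.1}; in the present drift setting it should read
\[
u_{xx}(\cdot,t)\ \ge\ -C(\tau,T)\qquad\text{on }\mathbb{R}\times[\tau,T],
\]
in the viscosity sense inside $\{u>0\}$. This makes $x\mapsto u_x(x,t)+C(\tau,T)\,x$ nondecreasing on each connected component of $\{u(\cdot,t)>0\}$. Since $u_x$ is globally bounded by Theorem~\ref{T.1.1} and $u$ vanishes on $[r(t),\infty)$, the monotone function then has a left limit at $r(t)$, and hence so does $u_x(\cdot,t)$, yielding a finite nonpositive value $D_x^-u(r(t),t)$.

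\textbf{Strict positivity under \eqref{1.6}.} Fix $t_0>0$ and put $x_0=r(t_0)$. Proposition~\ref{P.1.2} together with \eqref{1.6} forces alternative (ii) at every boundary point, so that for every $t_1\in(0,t_0)$ one has the strict relative gap
\[
X(x_0,t_0;t_1-t_0)-r(t_1)>0,
\]
i.e.\ the support is \emph{strictly} expanding along the streamline through $(x_0,t_0)$. My plan is to adapt the comparison argument of \cite{caffarelli1979regularity}: first pass to a Lagrangian frame that follows this streamline, which reduces \eqref{premain} to a porous medium equation with smooth lower-order coefficients (this change of variables is legitimate because Theorem~\ref{T.1.1} provides $B\in C^{k}_xC^{1,1}_t$). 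Next, on the thin parabolic box $[r(t_1),x_0]\times[t_1,t_0]$, construct an explicit linear subsolution $\phi(y,s)=c\,(\bar r(s)-y)_+$ whose moving front $\bar r$ advances at a speed only slightly exceeding the drift, calibrating the slope $c>0$ from the gap above together with the global Lipschitz bound on $u$ from Theorem~\ref{T.1.1} and the lower bound \eqref{1.6} propagated to time $t_1$. Finally, apply the standard comparison principle for the pressure equation, with $B$ treated as a fixed smooth coefficient, to conclude $\phi\le u$ on the box; evaluating at $s=t_0$ and letting $y\uparrow x_0$ then gives $-D_x^-u(x_0,t_0)\ge c>0$.

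\textbf{Main obstacle.} The principal difficulty is the nonlocality of $B$: because $B$ depends on $\varrho$ itself, one cannot a priori compare $u$ against a barrier computed for an externally prescribed vector field. This is resolved by invoking Theorem~\ref{T.1.1}, which already secures enough regularity of $B$ to treat it as a given smooth coefficient throughout the comparison. A secondary technical issue is that the lower-order terms $-u_x B-(m-1)u\,B_x$ in \eqref{premain} must not overwhelm the gain coming from strict relative expansion; this is handled by choosing $t_0-t_1$ sufficiently small so that these terms act as controlled perturbations of the porous-medium dynamics and can be absorbed into the constant~$c$. The other subtle point is showing that the constant $c$ can be chosen independently of $t_1$ as $t_1\uparrow t_0$, which will follow from the uniform-in-$t_1$ character of the gap provided by alternative (ii) applied along $[0,t_0]$ together with the a priori bounds of Theorem~\ref{T.1.1}.
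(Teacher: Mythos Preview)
Your argument for the existence of $D_x^-u(r(t),t)$ via the semiconvexity bound $u_{xx}\ge -C$ is correct and is exactly what the paper does (Lemma~\ref{L.4.2}, relying on the fundamental estimate, Proposition~\ref{T.2.3}).

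For positivity, there is a genuine gap. Your plan rests on a ``uniform-in-$t_1$ character of the gap'' $X(x_0,t_0;t_1-t_0)-r(t_1)$, but alternative (ii) in Proposition~\ref{P.1.2} is purely qualitative: the gap is strictly positive for $t_1<t_0$, yet it vanishes as $t_1\uparrow t_0$ and nothing prevents it from being $o(t_0-t_1)$. You then face an irreconcilable tension: taking $t_0-t_1$ small to treat the drift terms as perturbations forces the gap (and hence your slope $c$) to be small, while taking $t_0-t_1$ of order one leaves the barrier uncontrolled over $[t_1,t_0]$. Moreover, fitting a linear subsolution with front at $r(t_1)$ already requires $-u_x(r(t_1),t_1)\ge c$, which is the very statement you want at an earlier time; and ``propagating \eqref{1.6} to time $t_1$'' yields at best $u\gtrsim(r(t_1)-x)^\gamma$ with $\gamma$ possibly in $(1,2)$, too weak to sit above a linear profile near the boundary.

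The paper avoids this circularity by decoupling the two ingredients. The hard step is a forward \emph{propagation} result (Theorem~\ref{T.5.1}): if $k(t_1):=-D_x^-u(r(t_1),t_1)>0$ at some time, then $k(t)\ge e^{-\sigma(t-t_1)}k(t_1)$ for all later $t$. This is obtained from a delicate barrier~\eqref{5.5'} built on a second-order Taylor approximation of the streamline (here the $C^{1,1}_t$ bound on $B$ is essential), leading to the distributional inequality $k'+\sigma k\ge 0$. Only afterwards (Corollary~\ref{C.5.2}) does the strict expansion enter, and merely to locate \emph{one} time $t_1<t_0$ with $D_t^+r(t_1)>B(r(t_1),t_1)$, i.e.\ $k(t_1)>0$; propagation then carries this to $t_0$. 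No quantitative lower bound on the gap is ever needed.
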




To prove the non-degeneracy result, we will firstly show that if the free boundary is non-degenerate at one time, then it is non-degenerate for all time after. The corresponding conclusion for $(PME)$ in spatial dimension one is given in \cite{caffarelli1979regularity}.
In \cite{caffarelli1979regularity}, one key step is to use the fundamental estimate and Barenblatt profiles to construct suitable barriers, which then reveal a lower bound on the acceleration of the free boundary. 
For us, additional difficulties come from the drift. 

With drifts, the most direct approach is to follow the streamlines. However on one hand the coordinate of using streamlines does not cope well with the diffusion in the equation \eqref{main}, on the other hand using simple approximations of streamlines might not be accurate enough for the purpose of estimating the second derivative of the free boundary. 
In fact we construct new barriers involving the second order in time approximation of the streamlines, see \eqref{5.5'}, to carry out the argument. We rely heavily on the bound of $\|B\|_{C_x^3{C}_{t,{\loc}}^{1,1}}$, in which the estimate for time derivatives is non-trivial. Then the estimate at each single time, interpreted as inequalities between distributions, implies that a positive relative expanding speed of the free boundary can not decrease to $0$ in finite time. From this we conclude with the non-degeneracy property.


\medskip

With non-degeneracy, we are able to prove the following regularity of the solution which is uniform up to the free boundary. It then follows the regularity of the free boundary.

\begin{theorem}\label{T.1.4}
Assume the conditions of Theorem \ref{T.1.1}. For any $t_0>0$ and $k\in\bbN^+$, there exist $\eta,C>0$ such that 
\[\|u\|_{C_{x}^k{C}_t^{2,1}(N_{\eta}(t_0))}\leq C\]
where
\begin{equation}\label{Neta}
    N_{\eta}(t_0):={\{(x,t)\,|\, r(t)-x\in (0,\eta),\,|t-t_0|<\eta\}},
\end{equation}
and $r(\cdot)$ is a $C^{2,1}$ function on $(t_0-\eta,t_0+\eta)$.

\end{theorem}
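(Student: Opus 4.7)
The plan is to reduce the statement to a regularity theorem for a degenerate parabolic equation on a fixed rectangle via a partial hodograph transform, taking as essential input the non-degeneracy from Theorem~\ref{T.1.2}. The pointwise strict inequality $-D_x^- u(r(t_0),t_0)>0$, combined with the Lipschitz bound $u_x\in L^\infty$ from Theorem~\ref{T.1.1} and continuity of $r(\cdot)$, upgrades first to a quantitative monotonicity: there exist $\alpha,\delta,\eta_0>0$ with $u_x(x,t)\le -\alpha$ whenever $|t-t_0|<\eta_0$ and $r(t)-\delta<x<r(t)$. The map $(x,t)\mapsto(u(x,t),t)$ is then a bi-Lipschitz homeomorphism of this strip onto a rectangle $R:=[0,y_0)\times(t_0-\eta,t_0+\eta)$, and I denote its inverse by $\psi(y,s)$, so that $u(\psi(y,s),s)=y$, the free boundary corresponds to $\{y=0\}$, and $r(s)=\psi(0,s)$.

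Using $u_x=1/\psi_y$, $u_t=-\psi_s/\psi_y$ and $u_{xx}=-\psi_{yy}/\psi_y^{3}$, substituting into the one-dimensional pressure equation $u_t=(m-1)uu_{xx}+u_x^{2}-u_x B-(m-1)uB_x$ from \eqref{premain} and multiplying by $-\psi_y$ produces
\begin{equation*}
\psi_s-(m-1)\,y\,\frac{\psi_{yy}}{\psi_y^{2}} \;=\;-\frac{1}{\psi_y}+B(\psi,s)+(m-1)\,y\,\psi_y\,B_x(\psi,s) \quad\text{in } R.
\end{equation*}
The second-order coefficient degenerates linearly at $\{y=0\}$, but because $\psi_y\le -1/\alpha$ the ratio $y/\psi_y^{2}$ is well behaved, placing the equation in the quasilinear porous-medium class treated by Daskalopoulos--Hamilton, Koch, and in the drift-free setting by Kienzler--Koch--V\'azquez~\cite{kienzler2018flatness}. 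Evaluated at $y=0$, the equation reduces to $\psi_s(0,s)=-1/\psi_y(0,s)+B(\psi(0,s),s)$, which is the Darcy's law from Lemma~\ref{L.4.2}.

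The third step is to run the Schauder-type bootstrap for this class of equations, producing smoothing up to the degenerate boundary $\{y=0\}$. Since $\psi$ is Lipschitz on $R$ with $\psi_y$ uniformly bounded above and away from zero, and since $B\in C_x^k C_t^{1,1}$ by Theorem~\ref{T.1.1}, the degenerate-parabolic theory gives $\psi\in C_y^\infty(R)$ with every $y$-derivative H\"older continuous in $s$. Differentiating the equation once in $s$ produces a linear degenerate parabolic equation for $\psi_s$ whose coefficients inherit the Lipschitz $s$-regularity of $B_t$; this yields $\psi_s\in C_t^{1,1}$ and hence $\psi\in C_s^{2,1}$. Undoing the hodograph via $x=\psi(y,s)$ sends $R$ onto $N_\eta(t_0)$ for $\eta$ small, transferring the bounds to $\|u\|_{C_x^k C_t^{2,1}(N_\eta(t_0))}\le C$ for every $k$, and giving $r=\psi(0,\cdot)\in C^{2,1}(t_0-\eta,t_0+\eta)$.

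The principal obstacle is the finite time-regularity of $B$: as explained in the paper after Theorem~\ref{T.1.1}, for $m>2$ the identity $\varrho_t=m^{-1}((m-1)u/m)^{(2-m)/(m-1)}u_t$ obstructs any improvement of $B_t$ beyond Lipschitz, which is precisely what caps the time regularity of $\psi$, $u$, and $r$ at $C^{2,1}_t$. The delicate task is therefore to track where $B_t$ (and $B_{tt}$ understood distributionally) enter the bootstrap, and to verify that no higher time-derivatives of $B$ are needed to reach $C_t^{2,1}$; the spatial bootstrap, by contrast, is comparatively routine since $B\in C_x^\infty$.
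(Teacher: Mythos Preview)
Your hodograph approach is a legitimate alternative, genuinely different from the paper's. The paper instead follows the Aronson--V\'azquez program of \cite{LV12}: working with $v(x,t)=u(x+X(t),t_0+t)$ along a streamline, an explicit barrier of the form $\phi(x,t)=\alpha/(\zeta(t)-x)+\beta/(\zeta_*(t)-x)$ is compared with $p=v_{xx}$ to bound $v_{xx}$ from above near the boundary (Lemma~\ref{L.6.2}); then an induction on the order of spatial differentiation, using that each $v^{(j)}$ satisfies a linear equation of the form $\phi_t=(m-1)v\phi_{xx}+f_1\phi_x+f_2\phi+f_3$ with bounded coefficients once lower derivatives are controlled, together with the barrier transformation lemmas of \cite{LV12}, gives $|v^{(j)}|\le C_j$ for all $j$ (Lemma~\ref{T.6.2}). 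Time regularity is then read off directly: $\partial_t\partial_x^k u$ and $\partial_t^2\partial_x^k u$ each satisfy linear equations with coefficients built from spatial derivatives of $u$ and from $B,B_t$, so their $t$-Lipschitz bounds follow from $B\in C_x^\infty C_t^{1,1}$; the $C^{2,1}$ regularity of $r$ is obtained by differentiating $u(r(t),t)=0$ twice and invoking non-degeneracy.

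Your route outsources the hard spatial bootstrap to a degenerate-parabolic Schauder theory on a fixed rectangle, which is conceptually cleaner but has two points to fill in. First, upgrading the pointwise inequality $-D_x^-u(r(t_0),t_0)>0$ to uniform monotonicity $u_x\le-\alpha$ on a strip is not a consequence of $u_x\in L^\infty$ alone; you need the continuity of $u_x$ up to the boundary (Corollary~\ref{C.6.1}, which itself rests on the $C^{1,\alpha}$ regularity of $r$ in Theorem~\ref{T.6.1}) or else Theorem~\ref{T.5.1} combined with the fundamental estimate as in \eqref{B.2}. Second, the references you invoke (Daskalopoulos--Hamilton, \cite{koch}, \cite{kienzler2018flatness}) treat the drift-free PME, and you must check that their estimates accommodate the quasilinear lower-order terms $B(\psi,s)+(m-1)y\,\psi_y B_x(\psi,s)$ with $B$ only $C_t^{1,1}$; this is plausible and in the spirit of H\"ollig--Kreiss \cite{LV30}, but it is precisely the substantive content of the proof and should not be black-boxed. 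The paper's barrier method is more self-contained; yours would be shorter if the external theory is truly available off the shelf for this equation.
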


The proof of the theorem is given in Section \ref{s.5}. We adopt the inductive argument given in \cite{LV12} where, after assuming non-degeneracy, both the solution and the free boundary of $(PME)$ are shown to be smooth. In our case, a notable modification is necessary. To compensate the effect from the drift, we will follow the streamline starting at one boundary point and study equation \eqref{6.8}. 

\medskip

Now let us explain (with some formal calculations) the reason why we are only able to obtain $C^{1,1}_{\loc}$ and $C^{2,1}_{\loc}$ regularity in time for $B$ and $u$ respectively, even with smooth $V,W$. 
The problem comes from the nonlocal drift. Since $B=-V-W*\varrho$, to study the regularity of $B$, we need to make sense of the derivatives of $\varrho$ in time.
Recall that $\varrho=\l\frac{m-1}{m} u \r^\frac{1}{m-1}$. In general if $\frac{1}{m-1}\notin \bbN$, $\varrho$ is not a smooth function even when $u$ is smooth. Suppose that $u_t>0$ on the free boundary and then $\varrho_t$ is unbounded near the free boundary. As discussed before, it can be shown that $\varrho_t$ is integrable in space. Formally the equation \eqref{main} yields
\begin{equation}
    \label{1.8}\varrho_{tt}=(\varrho^m)_{xxt}-(B\varrho)_{xt}. 
\end{equation}
Let us ignore functions with only spatial derivatives, and then the right-hand side of \eqref{1.8} is of the linear form $f_1\varrho_t+f_2\varrho_{xt}+f_3\varrho_{xxt}$ for some bounded functions $f_i$.
Thus, we expect $\varrho_{tt}$ to be integrable. However $\varrho_{ttt}$ can be very singular near the free boundary. In fact further differentiating \eqref{1.8} in $t$ shows 
\begin{equation}
    \label{1.9}\varrho_{ttt}=(\varrho^m)_{xxtt}-(B\varrho)_{xtt}
\end{equation}
and there are singular terms involving $\varrho_t^2$ in the expression due to the nonlinearity of the equation. Also formally computing the convolution $W*\varrho_{ttt}$ using \eqref{1.9} and integration by parts, we might need to evaluate singular function $(\varrho^m)_{tt}$ at free boundary points. Thus it is possible that $B_{ttt}$ is not well-defined and so we are only able to obtain $C^{1,1}_{\loc}$ regularity of $B$ in time. 
Due to this and in view of the pressure variable equation, we can only expect $u$ to be $C^{2,1}_{\loc}$.

In general, suppose $W\neq 0$, $\frac{1}{m-1}\notin \bbN$ and there is no topological changes of the support, the problem of smoothness of both $B$ and $u$ in time remains open.



\medskip

When the initial data's support is one open interval, the support of the solution is simply an evolving interval. Then we have the following global result, in which we will use the notations
\[
\Omega:=\{(x,t)\,|\,u(x,t)>0,t>0\},\quad \Omega_t:=\{ u(\cdot,t)>0\}.
\]

\begin{theorem}\label{T.1.5}
Assume the conditions of Theorem \ref{T.1.1}. Suppose $\Omega_0$ is a finite interval and 
\beq\lb{1.91}
u_0(x)\geq c (\text{dist}(x,(\Omega_0)^c)^\gamma \quad\text{ for some }c>0,\gamma \in (0,2).
\eeq
Then in $\Omega$, $u$ is smooth in space and $C^{2,1}_{\loc}$ in time locally uniformly up to the free boundary (in the sense of Theorem \ref{T.1.4}), and there exist two $C^{2,1}_{\loc}$ functions $l(\cdot),r(\cdot)$ on $(0,\infty)$ such that $\Omega_t=(l(t),r(t))$.

Moreover if $W\equiv 0$ or $\frac{1}{m-1}\in\bbN$, $u$ is smooth in $\Omega$ locally uniformly up to the free boundary, and there exist two smooth functions $l(\cdot),r(\cdot)$ on $(0,\infty)$ such that $\Omega_t=(l(t),r(t))$.
\end{theorem}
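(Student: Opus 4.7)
The plan is to reduce Theorem \ref{T.1.5} to three ingredients: the fact that $\Omega_t$ remains a single interval for all $t>0$, the boundary regularity from Theorem \ref{T.1.4} applied to each endpoint, and interior parabolic regularity from Theorem \ref{T.1.1}. The two upgrade cases ($W\equiv 0$ or $\frac{1}{m-1}\in\bbN$) are handled by revisiting the inductive argument used for Theorem \ref{T.1.4} with now-smooth drift.

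Set $r(t):=\sup\{x:u(x,t)>0\}$ and $l(t):=\inf\{x:u(x,t)>0\}$; by finite propagation and the Lipschitz regularity of $u$ from Theorem \ref{T.1.1}, both are bounded and continuous, with $l(0),r(0)$ equal to the endpoints of $\Omega_0$. The hypothesis \eqref{1.91} implies that \eqref{1.6} holds at each of these endpoints, so Proposition \ref{P.1.2}(ii) and Theorem \ref{T.1.2} give that $r(t)$ and $l(t)$ are strictly expanding relative to the streamlines and non-degenerate for every $t>0$. I would then show $\Omega_t=(l(t),r(t))$: since \eqref{premain} is uniformly parabolic on the open positive set $\{u>0\}$ with $C^\infty_xC^{1,1}_t$ coefficients (again by Theorem \ref{T.1.1}), the strong maximum principle propagates interior positivity, and any putative interior zero would contradict this together with the non-degeneracy of the two endpoints. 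I expect this topological step to require the most care, since the equation degenerates at $u=0$; one makes it rigorous by comparing $u$ from below on any compact sub-interval of $\Omega_0$ with a Barenblatt-type sub-solution perturbed to absorb the smooth drift $B$.

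Granted connectedness, Theorem \ref{T.1.4} applied at each of the two boundary points $(l(t_0),t_0)$ and $(r(t_0),t_0)$ gives $C^k_xC^{2,1}_t$ control of $u$ in a one-sided parabolic neighborhood of each and shows $l,r\in C^{2,1}$ on a time neighborhood of $t_0$; covering $(0,\infty)$ by such neighborhoods yields $l,r\in C^{2,1}_{\loc}$. In the interior, for any compact $K\Subset\Omega$ Theorem \ref{T.1.1} gives $0<c\leq u\leq C$ on $K$, so \eqref{premain} is classically uniformly parabolic on $K$ with $C^\infty_xC^{1,1}_t$ coefficients; standard Schauder bootstrap in $x$ together with finite-difference estimates in $t$ then gives $u\in C^k_xC^{2,1}_t$ on compact subsets of $\Omega$ for every $k$, completing the first part of the theorem.

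For the upgrade to full smoothness: when $W\equiv 0$, the drift $B=-V$ is already $C^\infty_{x,t}$, which directly removes the obstruction explained after Theorem \ref{T.1.4}. When $\frac{1}{m-1}\in\bbN$, the identity $\varrho=\bigl(\tfrac{m-1}{m}u\bigr)^{1/(m-1)}$ exhibits $\varrho$ as a polynomial in $u$, so $\varrho$, and hence $B=-V-W*\varrho$, inherits the full $(x,t)$-smoothness of $u$. In either case one re-runs the induction used for Theorem \ref{T.1.4} (following a streamline out of a boundary point, as in Section \ref{s.5}) without the $C^{1,1}_t$ ceiling on $B$ that previously blocked the time-bootstrap, yielding smoothness of $u$ up to the free boundary and of $l,r$. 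The remaining obstacle is purely bookkeeping: checking that once $B$ is $C^\infty_{x,t}$, each step of the Section \ref{s.5} induction really does produce the next order of time-regularity, rather than being blocked by another nonlinearity hidden in the boundary-straightening reduction.
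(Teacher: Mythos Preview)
Your proposal is correct and follows essentially the same route as the paper: derive that $\Omega_t$ is a single interval, apply Theorem~\ref{T.1.4} at each endpoint together with interior parabolic regularity, and then re-run the time-bootstrap once $B$ is smooth (either because $W\equiv 0$ or because $\varrho=c\,u^{1/(m-1)}$ is a polynomial in $u$). The only place you diverge is the connectedness step, which you flag as the hardest and propose to handle via the strong maximum principle and Barenblatt subsolutions; the paper instead dispatches it in one line using Lemma~\ref{L.4.1}: the contrapositive says every zero of $u(\cdot,t)$ lies on a backward streamline of zeros reaching $t=0$, and since the $1$D streamline flow is an order-preserving homeomorphism of $\bbR$, the zero set at time $t$ is contained in the image of the two initial half-lines $(-\infty,l_0]\cup[r_0,\infty)$, forcing $\Omega_t$ to be an interval. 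Your barrier argument would ultimately reprove this same fact, so nothing is wrong, but note that the mechanism is streamline preservation (Lemma~\ref{L.4.1}) rather than non-degeneracy of the endpoints, which plays no role here.
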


Here the condition \eqref{1.91} corresponds to \eqref{1.6} which forces both the right and left free boundary to move relatively to the streamlines at all time. Thus instead of \eqref{1.91}, if assuming that $l(\cdot),r(\cdot)$ are not of type (i) after time $t_0\geq 0$, then the same conclusion holds for all $t>t_0$. The proof is the same.

For $(PME)$, the free boundary is strictly expanding after a finite time, \cite{book}. However for the equation with drifts, both nonlocal and local ones, it is possible to have permanent waiting time i.e. there is one streamline lying on the free boundary for all time. 

\begin{theorem}\label{T.1.3}
Let $\bbT$ denote the one dimensional flat torus.
There exist $W\in C^\infty(\mathbb{T})$ and a non-negative function $\varrho\in C^{\infty}(\mathbb{T})$ such that $\{\varrho(x)=0\}\neq \emptyset$ and $\varrho$ is a stationary solution to \eqref{main} with $m=2$, $V\equiv 0$ and domain $\mathbb{T}$.

For any $m>1$ and an open set $I\subseteq\bbR$, there exist a non-negative function $\rho_0$ with support $I$ and a smooth $V=V(x)$ such that the solution $\rho$ to \eqref{main} with $W\equiv 0$ and with initial data $\varrho(0)$ satisfies:
the free boundary $\Gamma_t=\Gamma_0$ is time-independent.

\end{theorem}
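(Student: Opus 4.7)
I would prove both parts by explicit construction of a stationary solution; once such $\rho_0$ (resp.\ $\varrho$) is found, the cited well-posedness of \eqref{main} yields that the Cauchy problem preserves it in time, so $\Gamma_t=\Gamma_0$ for every $t>0$, and the asserted ``permanent waiting time'' follows because every free boundary point is then a fixed point of the drift $B$ defined in \eqref{B} and hence its own streamline.

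For Part 2 (local drift, general $m>1$), the stationary equation on $\bbR$ is $(\rho_0^m)_{xx}+(\rho_0 V)_x=0$. Integrating once gives $(\rho_0^m)_x+\rho_0 V=C$, and the integration constant vanishes since $\rho_0$ will have compact support. On $\{\rho_0>0\}$ the resulting identity reads $V=-u_{0,x}$, where $u_0:=\tfrac{m}{m-1}\rho_0^{m-1}$ is the pressure. The plan is therefore: pick any smooth $\rho_0\ge 0$ with $\operatorname{supp}(\rho_0)=\overline{I}$ and $\rho_0$ vanishing to infinite order on $\partial I$ (a standard $\exp(-1/\operatorname{dist}(x,\partial I))$-type bump construction), define $V(x):=-u_{0,x}(x)$ inside $I$, and extend by $0$ outside. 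The mildly delicate point is that $V\in C^\infty(\bbR)$ even when $m\in(1,2)$: here one uses the infinite-order vanishing of $\rho_0$ at $\partial I$ to verify that $\rho_0^{m-1}$ and all its derivatives also vanish to infinite order (the formal derivative $(m-1)\rho_0^{m-2}\rho_0'$ is controlled because the superpolynomial smallness of $\rho_0'$ beats the polynomial blow-up of $\rho_0^{m-2}$). With $V$ so defined, $(\rho_0^m)_x=\rho_0\,u_{0,x}=-\rho_0 V$ on all of $\bbR$, and differentiating once recovers the stationary equation.

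For Part 1 (nonlocal drift on $\bbT$, $m=2$), the stationary equation integrates to $(\varrho^2)_x+\varrho(W*\varrho)=C$; evaluating at any zero of $\varrho$ forces $C=0$. I would try the trigonometric ansatz
\[
\varrho(x)=\tfrac12\bigl(1+\cos(2\pi x)\bigr)=\cos^2(\pi x),
\]
which is smooth, non-negative on $\bbT$, and has zero set $\{\tfrac12\}\neq\emptyset$. A direct computation gives $(\varrho^2)_x=-2\pi\sin(2\pi x)\,\varrho$, so after dividing by $\varrho$ on $\{\varrho>0\}$ and extending by continuity, the stationary equation collapses to the single convolution identity $(W*\varrho)(x)=2\pi\sin(2\pi x)$ on $\bbT$. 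Because $\hat\varrho(k)$ is supported on $\{0,\pm 1\}$ and the right-hand side has Fourier support $\{\pm 1\}$, the relation $\hat W(k)\hat\varrho(k)=\widehat{2\pi\sin(2\pi x)}(k)$ is a finite linear system: it forces $\hat W(\pm 1)=\mp 4\pi i$ and leaves $\hat W(k)$ free for $|k|\ge 2$, so I simply set those to zero to obtain the smooth kernel $W(x)=8\pi\sin(2\pi x)$. Direct substitution verifies stationarity, and $(W*\varrho)(\tfrac12)=0$ gives $B(\tfrac12)=0$, so the single free boundary point $\tfrac12$ is indeed a streamline for all time.

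The step I expect to be the main conceptual obstacle is implicit in Part 1: the statement is only saved from real difficulty because it requires $\{\varrho=0\}$ merely to be nonempty, which permits the trigonometric-polynomial ansatz whose finite Fourier support trivializes the deconvolution for $W$. If one instead wished $\{\varrho=0\}$ to have nonempty interior (arguably a more convincing ``waiting time''), then $\varrho$ would be non-analytic, $\hat\varrho(k)$ would be nonzero for infinitely many $k$, and producing a smooth $W$ satisfying $W*\varrho=-2\varrho'$ on $\operatorname{supp}(\varrho)$ becomes a genuine Fourier division problem: one would have to choose a smooth extension of $-2\varrho'$ outside the support whose Fourier coefficients are divisible by $\hat\varrho(k)$ with a rapidly decaying quotient. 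Since the theorem as stated asks only for a nonempty zero set, the explicit single-point construction suffices.
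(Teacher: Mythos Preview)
Your proposal is correct and follows essentially the same strategy as the paper: explicit construction of a stationary profile.

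For Part~1, the paper takes $\varrho=1+\sin(2\pi x)$ and computes the convolution directly via trig identities to arrive at $W=8\pi\sin(2\pi x)$; your choice $\varrho=\tfrac12(1+\cos(2\pi x))$ with a Fourier-side argument is the same construction up to a phase shift, and yields the same $W$.

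For Part~2 the two arguments are dual. The paper fixes a smooth potential $\Psi_i<0$ on each component $I_i$ (vanishing outside), sets $V=\sum_i(\Psi_i)_x$ and $\varrho_*=\tfrac{m-1}{m}\sum_i(-\Psi_i)^{1/(m-1)}$; smoothness of $V$ is then immediate since it is the derivative of a smooth function, and no regularity of $\varrho_*$ is needed. You instead fix a smooth $\rho_0$ vanishing to infinite order on $\partial I$ and set $V=-u_{0,x}$, which forces you to verify that $\rho_0^{m-1}$ (hence $V$) is smooth when $m-1\notin\bbN$; your infinite-order-vanishing argument handles this correctly. The paper's ordering is a bit slicker, but both work. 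One further difference: the paper does not stop at the stationary $\varrho_*$ but shows that \emph{any} $\varrho_0\le\varrho_*$ with the same positivity set also has a frozen free boundary, by combining comparison with $\varrho_*$ (which traps $\Gamma_t$ from outside) and the non-contraction along streamlines from Lemma~\ref{L.4.1} (which traps it from inside, using $V=0$ on $\Gamma_0$). For the theorem as stated, however, your appeal to uniqueness for the stationary initial data already suffices.
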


The proof is given at end of Section \ref{s.5}.
In the above two examples, $B= 0$ on the time-independent free boundary. Then streamlines starting at those points are stationary. Also, the solutions have quadratic growth near the free boundary. This implies that the growth condition \eqref{1.6} is indeed necessary to have strictly expanding (relatively to streamlines) free boundary. 

Let us mention that when $W\equiv 0$, several examples in general dimensions are given in \cite{kim2018fb} showing the preservation and formation of singularities at free boundary points with waiting time.

\subsection{Outline of the paper}

Section \ref{s.2} contains notations, preliminary definitions and the proof of the fundamental estimate. Section \ref{s.3} proves the Lipschitz continuity of $u$, and then the $C_x^\infty{C}_t^{1,1}$ regularity of $B$. In \ref{ss.3.1}, we study the equation of the free boundary and prove Darcy's law. Non-degeneracy of the free boundary is given in Section \ref{s.4} under the mild condition \eqref{1.6} on the initial data. In Section \ref{s.5}, we consider higher regularity of the solution and the free boundary, and we provide the proofs of Theorems \ref{T.1.4}-\ref{T.1.3}.


We gratefully acknowledges partial support by an
AMS-Simons Travel Grant. We would like to thank anonymous referees for their helpful comments and suggestions.

\section{Notations and Preliminaries}\label{s.2}

\subsection{Notations.}


Assume $O\subseteq \mathbb{R}\times \mathbb{R}^+$ is open. For $m,n\in\bbN\cup \{\infty\}$, $C_x^m{C}_t^n(O)$ denotes functions in $O$ that are $m$-times continuously differentiable in space and $n$-times continuously differentiable in time.  Let $f(x,t)$ be a function on $\mathbb{R}\times \mathbb{R}^+$ and suppose $f\in C_x^m{C}_t^n(U)$. For $m,n<\infty$, we write
\[\|f\|_{C_x^m{C}_t^n(O)}:=\sum\limits_{ \substack{
0\leq i\leq m\\
0\leq j\leq n}}\sup_{(x,t)\in O}|\partial_x^i\partial_t^j f(x,t)|.\]
By $C_{x,t}^n$ we mean $C_x^n{C}_t^n$. For $n\geq 1$, we denote $f^{(n)}:=\partial^n_x f$ for abbreviation.

For $0<\gamma\leq 1$, $m\in \bbN\cup\{\infty\}$ and $n\in\bbN$, the H\"{o}lder space $C_x^{m}{C}_t^{n,\gamma}(O)$ consists of all functions $f\in C_x^{m}{C}_t^n(O) $ for which the following norm is finite
\[\|f\|_{C_x^{m}{C}_t^{n,\gamma}(O)}:=\|f\|_{C_x^{m}{C}_t^n(O)}+\sup_{\substack{
    (x,t),(x,s)\in \overline{O}\\
    t\neq s}}\left\{\frac{|\partial_x^m\partial_t^nf(x,t)-\partial_x^m\partial_t^nf(x,s)|}{|t-s|^\gamma}\right\}.\]
If $\gamma=1$, then $\partial_x^m\partial_t^nf$ satisfies the Lipschitz condition in time. By $f\in C_x^mC_{t,\loc}^{n,\gamma}$, we mean that $\|f\|_{C_x^{m}{C}_t^{n,\gamma}(\bbR\times [t_1,t_2])}\leq C$ for some  $C=C(t_1,t_2)>0$ for all $0<t_1<t_2<\infty$.

Let $U$ be an open subset of $\bbR$. 
The H\"{o}lder space $C^{n,\gamma}(U)$ consists of all functions in $U$ that are $n$-times differentiable and  $g^{(n)}$ is $\gamma$-H\"{o}lder continuous. The $C^{n,\gamma}$ norm is defined similarly as the above.

We write $\|f\|_\infty$ as the  essential supremum norm ($L^\infty$ norm) of $f$ in its domain.

By $U^c$, we mean the complement of $U$ in $\mathbb{R}$.

We write
\[\Omega(\varrho):=\{(x,t)\,|\, \varrho(x,t)>0,\,t>0\}, \quad \Omega_t(\varrho):=\{x\,|\, \varrho(\cdot,t)>0\}\]
and
\[ \Gamma_t(\varrho):=\partial \Omega_t(\varrho),\quad \Gamma(\varrho):=\bigcup_{t\in\bbR^+}(\Gamma_t\times\{t\}).\]
Suppose $\varrho$ solves \eqref{main} and $u$ is its pressure. Then the above four sets stay the same after replacing $\varrho$ by $u$. We will omit their dependence on $\varrho$ (or $u$) when it is clear from the context.

Throughout the paper, if there is no further description, we denote $C$ as various \textit{universal constants}, by which we mean constants that only depend on $m,\varrho_0$, and regularities of $V,W$.

\medskip

Now we introduce the notion of solutions. 

\begin{definition}\label{def1.1}
Let $\varrho_0(x)$ be a non-negative, bounded and integrable function. For any $T>0$, we say that a non-negative, bounded and integrable function $\varrho(x,t):\mathbb{R}\times (0,T)\to[0,\infty)$ is a solution to \eqref{main} if 
\begin{equation}\label{d11}
\begin{aligned}
&\quad \varrho\in C([0,T],L^1(\mathbb{R}))\cap L^\infty(\mathbb{R}\times[0,T])\;\text{ and }\; (\varrho^m)_x\in L^2(\mathbb{R}\times [0,T]),\end{aligned}
 \end{equation}
and for all test functions $\phi\in C_c^\infty(\mathbb{R}\times [0,T))$
\[\int_0^T\int_{\mathbb{R}} \varrho\,\phi_t\,dxd= \int_{\mathbb{R}} \varrho_0\,\phi(\cdot,0)\,dx+\int_0^T\int_{\mathbb{R}}(( \varrho^m)_x+\varrho\,(V+W*\varrho))\,\phi_x\, dxdt.\]

We say that $u=\frac{m}{m-1}\varrho^{m-1}$ is the pressure of the density $\varrho$.
\end{definition}

\begin{theorem}\label{exitence boundedness}
Assume that $\varrho_0 $ is non-negative, bounded and compactly supported, and $V,W\in C^1_{x,t}$.
Then there exists a unique weak solution $\varrho$ to \eqref{main} with initial data $\varrho_0$ for all time. Moreover there exists a constant $C>0$ depending only on $m,\|\varrho_0\|_\infty+\|\varrho_0\|_{L^1},\|V\|_{C^0_{x,t}},\|W\|_{C^0_{x,t}}$ such that 
$
\|\varrho\|_{L^\infty(\bbR\times\bbR^+)}\leq C.
$
\end{theorem}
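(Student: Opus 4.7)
The plan is to decouple the nonlocal term by a fixed-point scheme and then deduce existence, uniqueness, and the $L^\infty$ bound from corresponding results for the porous medium equation with a prescribed smooth drift, which are classical and cited in the introduction. First I would fix $T>0$ and work in $X_T := C([0,T];L^1(\bbR))\cap L^\infty(\bbR\times[0,T])$. Given $\tilde\varrho\in X_T$ with appropriate $L^1$ and $L^\infty$ bounds, the induced vector field
$$\tilde B(x,t):=V(x,t)+(W(\cdot,t)*\tilde\varrho(\cdot,t))(x)$$
lies in $C^1_{x,t}$ since $V,W\in C^1_{x,t}$ and $\tilde\varrho\in L^1\cap L^\infty$. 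The PME with prescribed drift
$$\varrho_t=(\varrho^m)_{xx}+(\varrho \tilde B)_x,\qquad \varrho(\cdot,0)=\varrho_0$$
then has a unique weak solution $\Phi(\tilde\varrho)$ by the results of \cite{bertsch,carrillo2001entropy,bertozzi2009existence}. Since $\|W*\tilde\varrho_1-W*\tilde\varrho_2\|_\infty\leq \|W\|_\infty\|\tilde\varrho_1-\tilde\varrho_2\|_{L^1}$ and PME-with-drift admits an $L^1$-contraction, the map $\Phi$ is a contraction on a short time interval $[0,\tau]$, producing a local fixed point. Iterating over successive intervals — noting that all the relevant constants depend only on $m$, $\|\varrho_0\|_{L^1}+\|\varrho_0\|_\infty$, $\|V\|_\infty$ and $\|W\|_\infty$, hence are uniform in $t$ — yields a solution on all of $\bbR\times\bbR^+$.

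For the $L^\infty$ bound the crucial input is mass conservation: integrating \eqref{main} in $x$, with boundary terms at infinity vanishing by finite speed of propagation, gives $\|\varrho(\cdot,t)\|_{L^1}=\|\varrho_0\|_{L^1}$ for every $t\geq 0$. As a consequence
$$\|B\|_{L^\infty(\bbR\times\bbR^+)}\leq \|V\|_\infty+\|W\|_\infty\|\varrho_0\|_{L^1},$$
which depends only on $C^0$ data as the statement requires. With this uniform drift bound, the $L^\infty$ estimate for $\varrho$ reduces to an a priori bound for a PME with a prescribed, globally bounded drift. I would carry this out by working in the weak formulation with test functions of the form $(\varrho-M)_+^{p-1}$: integration by parts in $\int (\varrho B)_x(\varrho-M)_+^{p-1}$ transfers the spatial derivative off $B$ and onto $\varrho$, so the resulting energy estimate involves only $\|B\|_\infty$ (not $\|B_x\|_\infty$). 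Sending $p\to\infty$ after closing the Grönwall-type inequality produces the desired $L^\infty$ bound depending only on the $C^0$ norms.

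Uniqueness follows from an $L^1$-contraction argument in the spirit of Kruzhkov's doubling of variables. For two solutions $\varrho_1,\varrho_2$ with the same initial data, the corresponding drifts satisfy $\|B_1-B_2\|_\infty\leq \|W\|_\infty\|\varrho_1-\varrho_2\|_{L^1}$, and applying the $L^1$-contraction of PME-with-drift (treating one of the drifts as the fixed reference and the other as a perturbation) gives
$$\tfrac{d}{dt}\|\varrho_1-\varrho_2\|_{L^1}\leq C\,\|\varrho_1-\varrho_2\|_{L^1},$$
with $C$ depending only on $\|\varrho_i\|_\infty$ and $\|W\|_\infty$. Grönwall then forces $\varrho_1\equiv\varrho_2$, and in particular upgrades the local fixed point above to a genuine unique solution.

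The main obstacle is keeping the $L^\infty$ bound uniform in time with a constant depending only on $C^0$ norms of $V,W$. A direct maximum-principle argument at a spatial maximum brings in $B_x$ and yields at best an exponential-in-$t$ estimate; the remedy is to remain in the weak formulation throughout, so that spatial derivatives sit on $\varrho$ rather than on the drift, and to exploit the dispersive decay intrinsic to the PME (of Aronson--Benilan type) to absorb the contribution of the bounded drift perturbation on long time scales. Everything else is a packaging of existing technology.
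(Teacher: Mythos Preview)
The paper does not actually prove this theorem: it cites \cite{bertozzi2009existence,bedrossian2011local} for well-posedness---observing that in one dimension $W$ can be modified outside a large ball to fit their admissible-kernel framework, without affecting the compactly supported solution---and \cite{blanchet2009critical,bertozzi2009existence,zhang2018class} for the uniform $L^\infty$ bound. Your fixed-point alternative to the regularization in those references is reasonable, though the step ``PME-with-drift admits an $L^1$-contraction, hence $\Phi$ is a contraction'' is imprecise: $L^1$-contraction compares two solutions driven by the \emph{same} drift, whereas here you need $L^1$ stability when the drift itself is perturbed. That estimate does exist (via the same Kruzhkov doubling you later invoke for uniqueness), but it is a different statement and should be named as such.

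The real gap is the uniform-in-time $L^\infty$ bound. A Moser iteration with test function $(\varrho-M)_+^{p-1}$ closed by Gr\"onwall yields at best $\|\varrho(\cdot,t)\|_\infty\le Ce^{Ct}$: you correctly arrange that only $\|B\|_\infty$ enters, but nothing in that differential inequality lets the diffusion absorb the exponential growth. The Aronson--B\'enilan estimate you invoke is a lower bound on $u_{xx}$ and does not feed into a Moser scheme in any direct way. The references the paper cites obtain time-uniformity by a different mechanism, exploiting the $L^1\to L^\infty$ smoothing of the porous medium operator and iterating it against the controlled drift-induced growth over successive time intervals. Without an ingredient of that type your argument does not close uniformly in $t$, which is exactly the dependence the theorem asserts.
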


The well-posedness result can be derived similarly as done in \cite{bertozzi2009existence,bedrossian2011local}. 
Since the solution to be derived is compactly supported due to that the initial data is compactly supported (see Lemma \ref{L.2.2}), we can take advantage of space dimension one and modify $W$ for $|x|$ being large enough so that it is the spatial gradient of an admissible kernel in the sense of Theorem 4.2 in \cite{bertozzi2009existence} and the modification does not affect the solution in a finite time. Then we are able to adapt the
regularisation technique (used in the paper to remove the degeneracy of the diffusion) to obtain the unique weak solution that is compactly supported in the finite time. If $W$ is a potential of a convex function, we also refer readers to Theorem 11.2.8 \cite{ambrosio2008gradient} where the gradient flow structure is employed. Uniform boundedness of the solution for all time is obtained in \cite{blanchet2009critical,bertozzi2009existence,zhang2018class}.

\medskip

Next we introduce the fundamental estimate.

\begin{proposition}{\rm [The fundamental estimate]} \label{T.2.3}
Let $\varrho$ be from Theorem \ref{exitence boundedness} and let $u$ be its pressure. Suppose $V,W\in C_x^3C^0_t$.
We have for some $C>0$ only depending on $m,\|\varrho_0\|_\infty+\|\varrho_0\|_{L^1}, \|V\|_{C_x^3C^0_t}$ and $\|W\|_{C_x^3C^0_t}$ such that for all $t>0$,
\[ u_{xx} (\cdot,t)\geq -\frac{1}{t}-C\quad \text{ in }\mathbb{R}\] 
in the sense of distribution.
\end{proposition}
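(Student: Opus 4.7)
The plan is to follow the classical Aronson--Benilan strategy, regularizing to smooth, strictly-positive solutions and comparing $u_{xx}$ against a time-dependent barrier. I would first replace $\varrho_0$ with $\varrho_{0,\epsilon} := \varrho_0 * \eta_\epsilon + \epsilon \chi_{[-R,R]}$, whose corresponding pressure $u_\epsilon$ is smooth and uniformly bounded below by a positive constant on each finite time strip; the pressure equation then becomes uniformly parabolic, with $u_{\epsilon,x}, u_{\epsilon,xx} \to 0$ as $|x| \to \infty$ (so extrema are attained). It suffices to prove $u_{\epsilon,xx}(\cdot, t) \geq -1/t - C$ with $C$ independent of $\epsilon, R$; letting $\epsilon \to 0$ and $R \to \infty$ then preserves the inequality in $\mathcal{D}'(\bbR)$.

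Differentiating the pressure equation $u_t = (m-1) u u_{xx} + u_x^2 - u_x B - (m-1) u B_x$ twice in $x$ yields, for $w := u_{xx}$,
\begin{equation*}
w_t = (m-1) u w_{xx} + (2m u_x - B) w_x + (m+1) w^2 - (m+1) B_x w + R,
\end{equation*}
where $R := -(2m-1) u_x B_{xx} - (m-1) u B_{xxx}$. The essential feature is that the Aronson--Benilan quadratic ``savings'' term $(m+1)w^2$ survives the drift perturbation intact. I would then compare $w_\epsilon$ against the barrier $\phi(t) := -\alpha/t - C_*$ with $\alpha > 1/(m+1)$ and $C_* \gg 1$. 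If $w_\epsilon - \phi$ first touches zero from above at some $(x_0, t_0)$, then $w_x = 0$ and $w_{xx} \geq 0$ at $x_0$, and the PDE forces
\begin{equation*}
\frac{\alpha}{t_0^2} = \phi'(t_0) \geq (m+1) \phi(t_0)^2 - (m+1) B_x(x_0,t_0) \phi(t_0) + R(x_0,t_0).
\end{equation*}
Because $(m+1) \phi^2 \geq (m+1)\alpha^2/t_0^2$ and $\alpha > 1/(m+1)$, the quadratic savings already dominate the left side; the linear-in-$\phi$ piece is swept into $C_*$ and one is left with $R$. Choosing $\alpha$ slightly larger than $1/(m+1)$ and $C_*$ large enough in terms of $\|B\|_{C^3_x}$ produces a contradiction, yielding the desired barrier bound, which relabels as $-1/t - C$.

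The principal technical obstacle is that $R$ is linear in the a priori unbounded $u_x$. Since at the touching point $u_{xxx} = w_x = 0$, evaluating the pressure equation there gives the pointwise identity $u_x^2 = u_t - (m-1) u w + u_x B + (m-1) u B_x$, from which Cauchy--Schwarz produces a bound of the form $|u_x|^2 \leq C(1 + |u w|)$ modulo $u_t$, allowing $|u_x B_{xx}|$ to be absorbed into a small fraction of $(m+1) w^2$ plus lower-order terms; the residual $u_t$ contribution is handled by passing to the integrated/ODE viewpoint $\mu'(t) \geq (m+1)\mu(t)^2 - (m+1)\|B_x\|_\infty |\mu(t)| - C$ for $\mu(t) := \inf_x w_\epsilon(x,t)$, for which a standard comparison with a Riccati-type ODE gives the claim. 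All remaining terms are absorbed into the additive constant $C$, which depends only on $m$, $\|\varrho_0\|_\infty + \|\varrho_0\|_{L^1}$, $\|V\|_{C_x^3 C_t^0}$, and $\|W\|_{C_x^3 C_t^0}$, as required. Passing to the limit $\epsilon \to 0$ preserves $u_{\epsilon,xx} \geq -1/t - C$ in $\mathcal{D}'(\bbR)$.
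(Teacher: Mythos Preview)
Your overall strategy---regularize to smooth strictly positive solutions, differentiate the pressure equation twice, and compare $w=u_{xx}$ against a time-dependent barrier---matches the paper's. The gap is precisely where you flag it: the term $R=-(2m-1)u_xB_{xx}-(m-1)uB_{xxx}$ contains $u_x$, which is not a priori bounded at this stage (the Lipschitz bound on $u$ in the paper is proved \emph{after} the fundamental estimate). Your proposed fix does not close. At a touching point you have $w_x=u_{xxx}=0$, but this gives no control on $u_x$; rewriting the pressure equation as $u_x^2=u_t-(m-1)uw+u_xB+(m-1)uB_x$ trades the unknown $u_x^2$ for the equally unknown $u_t$. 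And $u_t$ is \emph{not} related to $\mu'(t)=\partial_t\bigl(\inf_x u_{xx}\bigr)$---the latter is $u_{xxt}$ at the minimizing point, not $u_t$---so the ``integrated/ODE viewpoint'' does not absorb this residual. You are left with an uncontrolled term in the touching inequality.

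The paper sidesteps this entirely by choosing a different barrier: instead of $\phi(t)=-\alpha/t-C_*$, it compares $p=u_{xx}$ against
\[
w(x,t)=-\frac{1}{t+\tau}+u(x,t)-C_1.
\]
The point of including $+u$ is that then $w_x=u_x$, so the transport term $-2mu_xw_x$ in the linearized operator produces $-2mu_x^2$. After using Young's inequality $|(2m-1)u_xB_{xx}|\le mu_x^2+C$ to load an $mu_x^2$ into the operator, and substituting the pressure equation for $u_t$, one obtains
\[
\calL_0(w)\le \frac{1}{(t+\tau)^2}-(m-1)u_x^2-\Bigl(\frac{1}{t+\tau}+C_1-u\Bigr)^2+C,
\]
in which the $u_x^2$ contribution now appears with a \emph{good} sign and can be dropped. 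Choosing $C_1$ large (depending only on $m,\|u\|_\infty,\|B\|_{C^3_x}$) makes this nonpositive, and comparison gives $u_{xx}\ge -1/t-C_1$. This ``add $u$ to the barrier'' trick is the missing idea in your argument; it converts the obstruction into a cancellation without any appeal to bounds on $u_x$ or $u_t$.
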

The proof of the theorem is similar to the one of Theorem 1.1 \cite{kim2018fb} where the case of $W\equiv 0$ is studied. We include the proof in the appendix.

\medskip

With the existence of the solution and due to Theorem \ref{T.1.1}, we can view $B=-V-W*\varrho$ as a known Lipschitz function of $(x,t)$. Then solving the following local-drift equation
\begin{equation}
    \label{1.1}
    \varrho_t=(\varrho^m)_{xx}-(\varrho \,B)_x
\end{equation}
with initial data $\varrho_0$ is the same as solving for \eqref{main}. Setting $u=\frac{m}{m-1}\varrho^{m-1}$, the corresponding pressure variable equation of \eqref{1.1} is
\begin{equation}\label{1.2}
\begin{aligned}
u_t=(m-1) u\, u_{xx}+| u_x|^2- u_x{B}-(m-1)u \,{B}_x\quad \text{in $\mathbb{R}\times \mathbb{R}^+$.}
\end{aligned}
\end{equation}

Below we give the notions of weak solutions to \eqref{1.1} and \eqref{1.2} which are similarly to those in Definition \ref{def1.1} (with $V,W$ replaced by $B,0$ respectively). However for the equation with local drift, comparison principle is available and so we also introduce sub/super solutions. 

\begin{definition}
Let $T,\varrho_0$ be as in Definition \ref{def1.1} and suppose $\varrho$ satisfies \eqref{d11}. Let $B$ be a bounded vector field in $\bbR\times [0,\infty)$. We say that $\varrho$ is a subsolution (resp. supersolution) to \eqref{1.1} with initial data $\varrho_0$ if
\begin{equation}
    \label{def sol2}
    \int_0^T\int_{\mathbb{R}} \varrho\,\phi_tdxdt\geq (\text{resp. } \leq) \int_{\mathbb{R}} \varrho_0(x)\phi(0,x)dx+\int_0^T\int_{\mathbb{R}}(( \varrho^m)_x-\varrho\,B)\phi_x\; dxdt,
    \end{equation}
     for all non-negative  $\phi\in C_c^\infty(\mathbb{R}\times [0,T))$.

We say that $\varrho$ is a weak solution to \eqref{1.1} if it is both sub- and supersolution of \eqref{1.1}. We also say that $u:= \frac{m}{m-1}\varrho^{m-1}$ is a {\it solution} ({resp.} {\it super/sub solution}) to \eqref{1.2} with known $B$, if $\varrho$ is a weak solution ({resp.} super/sub solution) to \eqref{1.1}.
\end{definition}
The existence result of \eqref{1.1} can be find in \cite{alt1983quasilinear} if $B$ is smooth, and in \cite{zhang2017regularity} if $B$ is bounded.

\medskip


We will make use of the following comparison principle for \eqref{1.1}.

\begin{theorem}{\rm [Theorem 2.2, \cite{alt1983quasilinear}]}\label{thm:comp}
Suppose $U$ is an open subset of $\mathbb{R}$ and $B\in C^{1}_{x}C^0_t(\bbR\times[0,\infty))$. For some $T>0$ let $\underline{\varrho},\bar{\varrho}$ be respectively a subsolution and a supersolution of \eqref{1.1} in $U\times (0,T)$ such that $ \underline{\varrho}\leq \bar{\varrho}$ a.e. on the parabolic boundary of $U\times (0,T)$. Then $\underline{\varrho}\leq \bar{\varrho} $ in $U\times(0,T)$.
\end{theorem}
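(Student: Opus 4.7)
The plan is the classical $L^1$-contraction (or Kato-inequality) argument for degenerate parabolic equations, in the spirit of Alt--Luckhaus. The main obstacle is the degeneracy of the diffusion at $\varrho=0$, which I would circumvent by exploiting the monotone factorization $\underline{\varrho}^m - \bar{\varrho}^m = a(x,t)(\underline{\varrho}-\bar{\varrho})$ with $a \in L^\infty$, $a\geq 0$, so that the sign of $\underline{\varrho}^m-\bar{\varrho}^m$ tracks that of $\underline{\varrho}-\bar{\varrho}$.

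First, setting $w := \underline{\varrho} - \bar{\varrho}$ and subtracting the subsolution inequality for $\underline{\varrho}$ from the supersolution inequality for $\bar{\varrho}$, I obtain
\[
\int_0^T\!\!\int_U w\,\phi_t\,dx\,dt \;\geq\; \int_0^T\!\!\int_U \bigl((\underline{\varrho}^m - \bar{\varrho}^m)_x - B\,w\bigr)\,\phi_x\,dx\,dt
\]
for every non-negative $\phi \in C_c^\infty(U\times[0,T))$, with the ordering $w\leq 0$ on the parabolic boundary built into the class of admissible test functions. To isolate $w^+$, I would test against
\[
\phi(x,t) := \chi(x)\,\psi(t)\,\eta_\eps\bigl(\underline{\varrho}^m - \bar{\varrho}^m\bigr),
\]
where $\chi \in C_c^\infty(U)$ is a spatial cutoff, $\psi$ localizes time near a chosen $t_0\in(0,T)$, and $\eta_\eps:\mathbb{R}\to[0,1]$ is a smooth non-decreasing approximation of $\mathbf{1}_{\{s>0\}}$. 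Since $\{\underline{\varrho}^m-\bar{\varrho}^m>0\}=\{w>0\}$, this test concentrates on the ``bad'' set; moreover, the chain rule applied to $\eta_\eps$ composed with $\underline{\varrho}^m-\bar{\varrho}^m$ (whose spatial derivative belongs to $L^2$ by \eqref{d11}) makes the principal diffusion contribution equal to the favorable term $\int \chi\psi\,\eta_\eps'(\underline{\varrho}^m-\bar{\varrho}^m)\,\bigl|(\underline{\varrho}^m-\bar{\varrho}^m)_x\bigr|^2 \geq 0$, which may be discarded.

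Second, I would pass to the limit $\eps\to 0$. The time-derivative term tends (after integration by parts against a primitive of $\eta_\eps$ and choosing $\psi$ approximating the characteristic of $[0,t_0]$) to $\int_U \chi\,w^+(\cdot,t_0)\,dx - \int_U \chi\,w^+(\cdot,0)\,dx$, while the drift term and the $\chi'$-boundary terms are controlled by $C(\|B\|_{C^1_x},\|\chi\|_{C^1},\|\varrho\|_\infty)\int \chi\,w^+$. This yields a Gronwall inequality
\[
\int_U \chi\,w^+(\cdot,t_0)\,dx \;\leq\; \int_U \chi\,w^+(\cdot,0)\,dx + C\int_0^{t_0}\!\!\int_U \chi\,w^+\,dx\,dt.
\]
By hypothesis $w^+(\cdot,0)=0$, and after choosing $\chi$ so that the lateral boundary contribution vanishes, Gronwall's lemma forces $w^+\equiv 0$, i.e.\ $\underline{\varrho}\leq \bar{\varrho}$ in $U\times(0,T)$. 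The main technical hurdle is the rigorous justification of $\eta_\eps(\underline{\varrho}^m-\bar{\varrho}^m)$ as an admissible test function and the chain rule under the low regularity $(\varrho^m)_x \in L^2$, together with handling the cross-term $\int Bw\,\chi\psi\,\eta_\eps'(\underline{\varrho}^m-\bar{\varrho}^m)(\underline{\varrho}^m-\bar{\varrho}^m)_x$ by Cauchy--Schwarz absorption into the discarded good term; these delicate points are the content of Theorem~2.2 of \cite{alt1983quasilinear}, whose argument this plan reproduces.
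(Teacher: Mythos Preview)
The paper does not supply its own proof of this statement: it is quoted verbatim as Theorem~2.2 of \cite{alt1983quasilinear} and used as a black box. Your sketch is a faithful outline of the classical $L^1$-contraction/Kato-inequality argument from that reference, and you correctly identify both the key mechanism (the sign of $\underline{\varrho}^m-\bar{\varrho}^m$ matches that of $\underline{\varrho}-\bar{\varrho}$, so testing with $\eta_\eps(\underline{\varrho}^m-\bar{\varrho}^m)$ produces a favorable diffusion term) and the genuine technical hurdles (admissibility of the nonlinear test function, chain rule at the available regularity, absorption of the drift cross-term). There is nothing to compare against in the present paper; your plan simply reproduces the cited source.
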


The following technical lemma is useful when we apply the comparison principle.
\begin{lemma}{\rm [Lemma 2.6, \cite{kim2018fb}]}\label{L.2.4}
Under the conditions of Theorem \ref{thm:comp}, let $\psi$ be a non-negative continuous function defined in $U \times [0,T]$ such that
\begin{itemize}
    \item[(a)] $\psi$ is smooth in its positive set and in this set we have $\psi_t- (\psi^m)_{xx}-(\psi\, B)_x\geq 0$,
    \item[(b)] $\psi^\alpha$ is Lipschitz continuous for some $\alpha\in (0,m)$,
    \item[(c)] $\Gamma(\psi)$ has Hausdorff dimension $1$.
\end{itemize}
Then 
\[\psi_t-(\psi^m)_{xx}-(\psi\,B)_x\geq 0 \text{ in }U\times (0,T)\]
in the weak sense.
\end{lemma}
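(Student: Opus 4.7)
The plan is to establish the distributional inequality by a level-set cutoff argument: localize to the open set $\Omega := \{\psi>0\}$ where (a) makes the inequality classical, and then show that the fluxes through shrinking approximate level sets vanish in the limit, thanks to (b) and (c).

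Fix a nonnegative $\phi \in C_c^\infty(U\times(0,T))$; the target is
\[
I := \iint \bigl(-\psi\,\phi_t - \psi^m\,\phi_{xx} + \psi B\,\phi_x\bigr)\,dx\,dt \;\geq\; 0.
\]
By (c), $|\Gamma|_2 = 0$ (since $\dim_H \Gamma = 1 < 2$), and $\psi$ vanishes on the interior of $\{\psi=0\}$, so only $\Omega$ contributes to $I$. For $\delta>0$, set $\Omega_\delta := \{\psi>\delta\}$; since $\psi\in C^\infty(\Omega)$ by (a), Sard's theorem makes $\{\psi=\delta\}$ a smooth $1$-manifold inside $\Omega$ for a.e.\ $\delta$, and $I_\delta := \iint_{\Omega_\delta}(\cdots)\to I$ by dominated convergence (using that $\psi$ is bounded on $\mathrm{supp}(\phi)$).

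Inside $\Omega_\delta$, integration by parts (with $\phi$ compactly supported so there is no contribution from the parabolic boundary of $U\times(0,T)$) rewrites
\[
I_\delta \;=\; \iint_{\Omega_\delta}\bigl(\psi_t - (\psi^m)_{xx} - (\psi B)_x\bigr)\phi\,dx\,dt \;+\; BT_\delta,
\]
where the volume term is $\geq 0$ by (a) and $\phi\geq 0$, so $I_\delta \geq BT_\delta$. The boundary term $BT_\delta$ is a sum of integrals of $\psi\phi\nu_t$, $\psi^m\phi_x\nu_x$, $\psi B\phi\nu_x$, and $(\psi^m)_x\phi\nu_x$ over $\{\psi=\delta\}$. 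The first three coefficients are trivially $O(\delta)+O(\delta^m)$; the crucial estimate for the last is
\[
|(\psi^m)_x| \;=\; \tfrac{m}{\alpha}\,\psi^{m-\alpha}\,|(\psi^\alpha)_x| \;\leq\; C\,\delta^{m-\alpha}
\]
on $\{\psi=\delta\}$, using (b) to bound $(\psi^\alpha)_x$ in $L^\infty$ and $\alpha<m$ to ensure $\psi^{m-\alpha}\to 0$.

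The final step, which I expect to be the main technical obstacle, is converting these pointwise factor estimates into $BT_\delta \to 0$ along a sequence: the arc-length of $\{\psi=\delta\}\cap\mathrm{supp}(\phi)$ need not be uniformly bounded, so no pointwise-in-$\delta$ argument suffices. My plan is to apply the coarea formula to $\int_0^{\delta_0}|BT_\delta|\,d\delta$, rewriting it as a volume integral of the form
\[
C\int_{\mathrm{supp}(\phi)\cap\{\psi<\delta_0\}}(\psi^{m-\alpha}+\psi+\psi^m)\,|\nabla\psi|\,dx\,dt.
\]
Since $|\nabla\psi| \leq \tfrac{1}{\alpha}\psi^{1-\alpha}\,|\nabla\psi^\alpha|$ is locally integrable by (b) (with the integrand controlled by $\psi^{1-\alpha}$ times an $L^\infty$ factor, which has finite integral on any bounded set), this volume integral tends to $0$ as $\delta_0\to 0^+$, so $BT_{\delta_n}\to 0$ along some subsequence $\delta_n\to 0^+$. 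Passing to the limit in $I_{\delta_n}\geq BT_{\delta_n}$ then yields $I\geq 0$, which is the desired distributional inequality. Note condition (c) is used only to justify reducing the distributional integral to one over $\Omega$, while (b) drives both the smallness of the boundary integrand and the integrability needed in the coarea estimate.
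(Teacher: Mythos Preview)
The paper does not prove this lemma; it is quoted verbatim from \cite{kim2018fb} (Lemma 2.6 there) without argument. So there is no in-paper proof to compare against, and I assess your proposal on its own merits.

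Your overall strategy---truncate at level sets $\Omega_\delta=\{\psi>\delta\}$, integrate by parts, use (a) for nonnegativity of the bulk term, and show the boundary fluxes vanish along a subsequence via (b) and (c)---is the standard and correct approach. Your identification of the four boundary terms and the key estimate $|(\psi^m)_x|\le C\delta^{m-\alpha}$ on $\{\psi=\delta\}$ from the Lipschitz bound on $\psi^\alpha$ are right.

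There is, however, a genuine gap in the final step. The implication ``$\int_0^{\delta_0}|BT_\delta|\,d\delta\to 0$ as $\delta_0\to 0^+$, so $BT_{\delta_n}\to 0$ along some subsequence'' is false: take $|BT_\delta|\equiv 1$, for which $\int_0^{\delta_0}1\,d\delta=\delta_0\to 0$ yet no subsequence vanishes. What one actually needs is either that the \emph{average} $\delta_0^{-1}\int_0^{\delta_0}|BT_\delta|\,d\delta\to 0$, or a direct $\liminf$ argument. A related unjustified claim is that $\psi^{1-\alpha}$ ``has finite integral on any bounded set'': this is clear only when $\alpha\le 1$, whereas the paper's applications (the barriers in Lemma~\ref{L.2.2} and in the proof of Theorem~\ref{T.5.1}) have $\psi^{m-1}$ Lipschitz, i.e.\ $\alpha=m-1>1$ whenever $m>2$.

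A cleaner route that repairs the subsequence step is to run coarea on the Lipschitz function $\psi^\alpha$ rather than on $\psi$: setting $G(r):=\mathcal{H}^1(\{\psi^\alpha=r\}\cap\mathrm{supp}\,\phi)$, one has $\int_0^\infty G(r)\,dr\le L\,|\mathrm{supp}\,\phi|<\infty$, whence $\liminf_{r\to 0}r\,G(r)=0$. Along such $r_n=\delta_n^\alpha$ one gets $\delta_n^\alpha\mathcal{H}^1(\{\psi=\delta_n\})\to 0$, and then $|BT_{\delta_n}|\le C(\delta_n+\delta_n^{m-\alpha})\mathcal{H}^1\to 0$ provided $\min(1,m-\alpha)\ge\alpha$, i.e.\ $\alpha\le 1$. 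For $\alpha>1$ this still does not close the argument in full generality; one then needs an additional ingredient (for instance, that the level sets near $\Gamma$ have uniformly bounded $\mathcal{H}^1$-length, which does hold for the specific barriers used in this paper since their free boundaries are smooth curves). You should either restrict to $\alpha\le 1$, add such a hypothesis, or supply a different argument covering the range $1<\alpha<m$.
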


To end this section, let us quantify the finite propagation property of the drift equation. The proof makes use of the comparison principle and it is postponed to the appendix.

\begin{lemma}\label{L.2.2}
Suppose $u$ solves \eqref{1.2} with vector field $B\in C_x^1C^0_t(\mathbb{R}\times [0,\infty))$, and with non-negative, bounded and compactly supported initial data $u_0$. Then there exists $C>0$ such that for each $t>0$, $u(\cdot,t)$ is supported in $B_{C(1+t)}$.
\end{lemma}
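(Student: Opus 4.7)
The strategy is classical: build a short-time, right-moving linear-in-$x$ supersolution of the pressure equation (and a symmetric left-moving one), apply the comparison principle Theorem~\ref{thm:comp} together with the matching lemma Lemma~\ref{L.2.4}, and iterate on successive short time intervals.

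First, Theorem~\ref{exitence boundedness} yields a uniform bound $\|\varrho\|_\infty\leq N_1$ and hence $\|u\|_\infty\leq N$ for some constant $N$; the assumptions on $V,W$ together with the integrability of $\varrho$ give $M_0:=\|B\|_\infty<\infty$ and $M_1:=\|B_x\|_\infty<\infty$. Fix $R_0>0$ with $\text{supp}\,u_0\subset[-R_0,R_0]$. On the half-strip $U\times(0,\tau)$ with $U:=(R_0+1,\infty)$ and a small $\tau>0$ to be chosen, I consider the traveling-wave pressure profile
\[
\bar u(x,t):=c\,(g(t)-x)_+,\qquad g(t):=g_0+\alpha t,
\]
with constants $c$ and $g_0-(R_0+1)$ large enough that $c(g_0-R_0-1)\geq N$, and $\alpha$ to be chosen. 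On the lateral boundary $\{x=R_0+1\}\times[0,\tau]$, $\bar u\geq N\geq u$; on $U\times\{0\}$, $u_0\equiv 0\leq \bar u$.

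Second, I verify the supersolution inequality in the positive set. A direct computation for the associated density $\bar\varrho=\bigl(\tfrac{m-1}{m}\bar u\bigr)^{1/(m-1)}=K(g-x)_+^{1/(m-1)}$ gives, after factoring out the common $(g-x)^{(2-m)/(m-1)}$,
\[
\bar\varrho_t-(\bar\varrho^m)_{xx}-(\bar\varrho B)_x \;=\; \frac{K\,(g-x)^{(2-m)/(m-1)}}{m-1}\Bigl[\alpha-c+B-(m-1)(g-x)B_x\Bigr],
\]
so the supersolution inequality reduces to $\alpha\geq c+M_0+(m-1)(g-x)M_1$. Since $g(t)-x\leq g_0-R_0-1+\alpha\tau$ on the relevant region, taking $\tau:=\tfrac{1}{2(m-1)(1+M_1)}$ and solving the resulting linear inequality for $\alpha$ yields an explicit universal constant $\alpha>0$. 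Because $\bar\varrho^{m-1}$ is Lipschitz (with exponent $m-1\in(0,m)$) and the free boundary $\{x=g(t)\}$ is a smooth curve of Hausdorff dimension one, Lemma~\ref{L.2.4} promotes $\bar\varrho$ to a weak supersolution of \eqref{1.1} on $U\times(0,\tau)$. Theorem~\ref{thm:comp} then gives $\varrho\leq\bar\varrho$ on $U\times(0,\tau)$, whence $\text{supp}\,\varrho(\cdot,t)\cap U\subset(-\infty,g(t)]$ for $t\in[0,\tau]$. A symmetric construction bounds the left boundary, so $\text{supp}\,\varrho(\cdot,\tau)\subset[-R_0-\Delta,R_0+\Delta]$ for a universal increment $\Delta:=g_0-R_0-1+\alpha\tau$.

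Third, I restart the argument from time $\tau$ with initial data $\varrho(\cdot,\tau)$ supported in $[-R_0-\Delta,R_0+\Delta]$; since $\alpha,\tau,\Delta$ depend only on the universal constants $m,N,M_0,M_1$ (and not on the current radius), the iteration yields $\text{supp}\,\varrho(\cdot,k\tau)\subset[-R_0-k\Delta,R_0+k\Delta]$ for every $k\in\bbN$, which implies $\text{supp}\,\varrho(\cdot,t)\subset B_{C(1+t)}$ with $C:=\max(R_0+\Delta,\Delta/\tau)$. The main obstacle is the coupling term $(m-1)(g-x)B_x$ in the pressure supersolution inequality, which prevents any single linear profile from being a supersolution for all time (the required $\alpha$ blows up with the width $g-x$); the short-time iteration is precisely what sidesteps this, and exploits that in dimension one there are no topological complications of the support to worry about between iterations.
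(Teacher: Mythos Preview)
Your proof is correct and follows the same scheme as the paper: build a linear-in-$x$ traveling supersolution of the pressure equation, invoke Lemma~\ref{L.2.4} and the comparison principle (Theorem~\ref{thm:comp}) on a short time interval, and iterate. The only difference is the specific barrier: the paper uses $\phi(x,t)=e^{\alpha t}M\bigl(R+C_1k+(e{+}1)Mt+1-x\bigr)_+$, where the exponential prefactor $e^{\alpha t}$ with $\alpha=(m-1)M$ directly absorbs the lower-order term $(m-1)\phi B_x$ without having to control the width $g-x$, whereas you keep a constant slope $c$ and instead bound $(m-1)(g-x)M_1$ by choosing $\tau$ small. Both devices serve the same purpose and yield the same linear-in-$t$ growth of the support; your version is slightly more elementary, the paper's avoids solving a coupled inequality for $\alpha$. (One cosmetic slip: your increment should be $\Delta=g_0-R_0+\alpha\tau$, not $g_0-R_0-1+\alpha\tau$, since the new right endpoint is $g(\tau)=g_0+\alpha\tau$.)
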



\section{Pressure's regularity and Darcy's law}\label{s.3}

In this section, we establish some regularity property of the pressure variable $u$ and the drift $B=-\l V+W*\varrho \r(x,t)$. As a corollary we will obtain Darcy's law.

\begin{lemma}\label{L.3.1}
Let $\varrho$ be the solution to \eqref{main} with bounded, non-negative and integrable initial data $\varrho_0$, and suppose that $V,W$ are bounded in $C_x^3C_t^0(\mathbb{R}\times [0,\infty))$. Let $u$ be its pressure variable. Then there is a universal constant $C>0$ such that for any $t>0$ the following hold a.e. in $\mathbb{R}$,
\begin{align*}
    |u_{x}|^2(\cdot,t)+|u_t|(\cdot,t) \leq C(1+\frac1t).
\end{align*}
Here $C$ only depends on $m$, $\|\varrho_0\|_1+\|\varrho_0\|_\infty$ and $ \|V\|_{C_x^3C_t^0(\mathbb{R}\times [0,\infty))}+\|W\|_{C_x^3C_t^0(\mathbb{R}\times [0,\infty))}$.
\end{lemma}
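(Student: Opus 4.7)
The plan is to combine three a priori facts: the $L^\infty$ bound $u \leq M$ (from Theorem~\ref{exitence boundedness} applied to $\varrho$, since $u = \frac{m}{m-1}\varrho^{m-1}$); the finite propagation of Lemma~\ref{L.2.2}, which keeps $u(\cdot,t)$ compactly supported in an interval of length $O(1+t)$; and the one-sided second derivative bound $u_{xx}(\cdot,t) \geq -\frac{1}{t}-C$ in the sense of distributions from Proposition~\ref{T.2.3}. Set $\alpha(t) := \frac{1}{t}+C$ throughout.

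First I would derive the spatial gradient bound by a purely one-dimensional convex-analysis argument. The semi-convexity implies that $x \mapsto u_x(x,t)+\alpha(t)x$ has a non-decreasing representative at each fixed $t$, so at any Lebesgue point $x_0$ with $u_x(x_0,t) = \lambda > 0$ one has $u_x(x,t) \geq \lambda - \alpha(t)(x-x_0)$ for $x \geq x_0$. Integrating over $[x_0, x_0 + \lambda/(2\alpha(t))]$,
\[
u\Bigl(x_0+\tfrac{\lambda}{2\alpha(t)},t\Bigr) - u(x_0,t) \geq \int_0^{\lambda/(2\alpha(t))}\bigl(\lambda-\alpha(t) s\bigr)\,ds = \tfrac{3\lambda^2}{8\alpha(t)}.
\]
Combined with $u \leq M$, this forces $\lambda^2 \leq \tfrac{8M\alpha(t)}{3} \leq C(1+1/t)$. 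The case $\lambda<0$ is symmetric (integrate to the left), so $|u_x|^2 \leq C(1+1/t)$ a.e.

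Next I would read the pressure equation \eqref{1.2} pointwise a.e.\ on the support,
\[
u_t = (m-1) u u_{xx} + u_x^2 - u_x B - (m-1) u B_x.
\]
The lower bound is immediate: $(m-1)u u_{xx} \geq -(m-1)M\alpha(t)$ from Proposition~\ref{T.2.3}, and the remaining three terms are bounded using the previous step together with the $L^\infty$ bounds on $B = -V - W*\varrho$ and on $B_x = -V_x - W_x*\varrho$, which are universal since $V,W,W_x$ are bounded and $\varrho \in L^1 \cap L^\infty$. This yields $u_t \geq -C(1+1/t)$ a.e.

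The upper bound on $u_t$ is the main obstacle, because the fundamental estimate is one-sided: a priori, $(m-1) u u_{xx}$ could be large and positive inside $\{u>0\}$ where $u$ is strongly convex. My plan is to apply a Bernstein-type maximum principle to an auxiliary quantity such as $Z := t u_t$ (or $Z := u_t - \beta u_x^2$ for a suitably tuned $\beta$), whose parabolic equation, obtained by formally differentiating \eqref{1.2} in $t$, takes the schematic form
\[
Z_t - (m-1) u Z_{xx} - (2u_x - B) Z_x = (m-1)(u_{xx}-B_x) Z + \text{lower order},
\]
where ``lower order'' collects terms already controlled by the previous steps and by $\|B\|_{C^1_x C^0_t}$. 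Since the coefficient $(m-1)(u_{xx}-B_x)$ is only controlled from below, the auxiliary $Z$ must be chosen so that the potentially bad positive contribution of $u_{xx}$ cancels against an $u_x^2$ correction inserted into $Z$. Because the equation is uniformly parabolic on $\{u>0\}$, this computation can be performed on the strictly positive approximations used in the existence theory and then passed to the limit, using Darcy's law $u_t = u_x(u_x-B)$ at the free boundary (where $u=0$ makes the problematic term vanish, and Step~1 already bounds $u_x$) to close the estimate.
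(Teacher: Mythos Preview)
Your spatial gradient bound and the lower bound on $u_t$ are correct and match the paper's argument (the paper cites Lemma~15.2 of \cite{book}, which is exactly your semi-convexity integration). The gap is in the upper bound on $u_t$. You correctly identify that a naive maximum principle for $Z=u_t$ fails because the zeroth-order coefficient $(m-1)u_{xx}$ is only bounded from below, and you correctly guess that an $u_x^2$ correction is needed---but you do not carry out the computation, and the structure that makes it work is not visible from your schematic equation. The paper takes
\[
\varphi:=u_t+(m-1)u_x^2+u_xB+(m-1)uB_x=(m-1)u\,u_{xx}+mu_x^2,
\]
so that $u_{xx}=\frac{\varphi-mu_x^2}{(m-1)u}$ can be eliminated entirely. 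With $\calL_1(\varphi):=\varphi_t-(m-1)u\varphi_{xx}-(2u_x-B)\varphi_x$, a lengthy but elementary calculation then yields
\[
\calL_1(\varphi)=-\tfrac{1}{u}(\varphi-f_1)(\varphi-f_2)+f_3,
\]
with $\|f_i\|_\infty\le C_1(1+\tfrac{1}{t+\tau})$ depending only on $\|B\|_{C^3_xC^0_t}$ and $\|u_x\|_\infty$. The negative quadratic nonlinearity in $\varphi$ is what forces $\varphi\le C_2(1+\tfrac{1}{t+\tau})$ by comparison with the explicit supersolution $w=C_2(1+\tfrac{1}{t+\tau})$. Neither of your candidates $tu_t$ or $u_t-\beta u_x^2$ produces this structure without the additional drift corrections, and the calculation is not routine. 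Note also that computing $\varphi_t$ from the \emph{second} expression $(m-1)u\,u_{xx}+mu_x^2$ is essential: it avoids any appearance of $B_t$, which is not controlled under the stated hypothesis $V,W\in C^3_xC^0_t$; your schematic equation, obtained by differentiating in $t$, would introduce $B_t$ and $B_{xt}$ into the ``lower order'' terms.

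Finally, your appeal to Darcy's law is both circular (Lemma~\ref{L.4.2} is proved downstream of this lemma) and unnecessary: on the strictly positive smooth approximations there is no free boundary, and the comparison for $\varphi$ is global with $\varphi(\cdot,0)\le C/\tau$ from the approximation data.
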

\begin{proof}
Since $\|\varrho(\cdot,t)\|_{L^1(\bbR)}=\|\varrho_0\|_{L^1(\bbR)}$ by the equation, the assumption yields that $B=B(x,t)$ is uniformly bounded in $C^3_xC^0_t(\mathbb{R}\times [0,\infty))$. 
With this vector field, let $u$ be a solution to \eqref{1.2} with initial $u_{0}:=\frac{m}{m-1}\varrho_0^{m-1}$. By taking a sequence of smooth approximations $u^\tau$ with $\tau>0$ of $u$ as described in the proof of Proposition \ref{T.2.3} ($u^\tau\to u$ locally uniformly in $\bbR\times [0,\infty)$ as $\tau\to 0$) and due to this proposition, we can assume that for some universal constant $C>0$, 
\beq\lb{3.10}
|u^\tau_{xx}(\cdot,0)|\leq \frac{1}{\tau}\text{ in }\bbR,\quad\text{  and }\quad u^\tau_{xx}\geq -\frac{1}{t+\tau}+C\text{ in }\bbR\times [0,\infty).
\eeq
Below we write $u$ in place of $u^\tau$ for the sake of simplicity.


To show the Lipschitz bound in space, we apply Lemma 15.2 \cite{book} which says that if a function $f\in C^2(\bbR)$ satisfies $0\leq f\leq N$ and $f_{xx}>-C$, then $|f_x|\leq \sqrt{2NC}$. In view of the fundamental estimate and Theorem \ref{exitence boundedness}, we obtain
\beq\lb{3.11}
|u_x|^2(x,t)\leq C(1+\frac{1}{t+\tau})\quad \text{ for all }(x,t)\in \bbR\times [0,\infty).
\eeq
The lower bound in the second estimate follows immediately from the equation \eqref{1.2} and the fundamental estimate. Indeed
\begin{align*}
    u_t=(m-1)u\, u_{xx}+|u_x|^2-u_x B-(m-1)u B_x\geq -C(1+\frac{1}{t+\tau}).
\end{align*}

 For the upper bound, we modify the proof of Theorem 15.5 \cite{book} (where $(PME)$ is considered). 
Set
\[\varphi:=u_t+(m-1)u_x^2+u_xB+(m-1)uB_x=(m-1)u\, u_{xx}+m u_x^2.\]
By direct computations,
\begin{align*}
    \varphi_x &=(m-1)u u_{xxx}+(3m-1)u_x u_{xx},\\
    \varphi_{xx} &= u_{txx}+2(m-1)(u_{xx}^2+u_x u_{xxx})+(u_{xxx}B+(m+1)u_{xx}B_x+(2m-1)u_xB_{xx}+(m-1)uB_{xxx}),\\
    \varphi_t &=(m-1)u u_{xxt}+(m-1)u_t u_{xx}+2m u_{xt}u_x\\
    &=(m-1)u u_{xxt}+(m-1) u_{t}u_{xx}+2m(m+1)u_x^2u_{xx}+2m(m-1)uu_xu_{xxx}\\
    &\quad -2m(u_x u_{xx} B+m u_x^2 B_x+(m-1)u u_x B_{xx}).
\end{align*}

Next define $\calL_1(\varphi):=\varphi_t-(m-1)u \varphi_{xx}-(2u_x -B)\varphi_x$. We get
\begin{align*}
\calL_1(\varphi)&= (m-1) u_{t}u_{xx}+2m(m+1)u_x^2u_{xx}+2m(m-1)uu_xu_{xxx}\\
    &\quad -2m(u_x u_{xx} B+m u_x^2 B_x+(m-1)u u_x B_{xx})  \\
&\quad -(m-1)u (2(m-1)(u_{xx}^2+u_x u_{xxx})+((m+1)u_{xx}B_x+(2m-1)u_xB_{xx}+\\
&\quad \quad (m-1)uB_{xxx})) -(2u_x-B)(3m-1)u_x u_{xx}\\
&= (m-1)u_{xx} (u_t+2(m-1)u_x^2-2(m-1)u\, u_{xx})\\
&\quad -(m-1)u ((m+1)u_{xx}B_x+(4m-1)u_x B_{xx}+(m-1)uB_{xxx})\\
&\quad -2m^2 u^2_x  B_x+{2(4m-1)}u_xu_{xx}B .
\end{align*}
Using the equation of $u$, the above
\begin{align*}
&=(m-1)u_{xx} (-(m-1)u\, u_{xx}+(2m-1)u_x^2+\frac{7m-1}{m-1}u_xB-2m uB_x)\\
&\quad -(m-1)u ((4m-1)u_x B_{xx}+(m-1)uB_{xxx})-2m^2 u^2_x  B_x. 
\end{align*}
Then apply $(m-1)u_{xx}=\frac{\varphi-m u_x^2}{u}$ to obtain
\begin{align*}
\calL_1(\varphi)
&=-\frac{1}{u}(\varphi-m u_x^2)(\varphi-(3m-1)u_x^2-\frac{7m-1}{m-1}u_xB+2m uB_x)\\
&\quad -(m-1)(4m-1)u u_xB_{xx}-(m-1)^2u^2 B_{xxx}-2m^2 u_x^2 B_x\\
&=: -\frac{1}{u}(\varphi-f_1(u_x))(\varphi-f_2(u,u_x,B))
+f_3(u,u_x,B)
\end{align*}
where
\begin{align*}
   &f_1(u_x)=m u_x^2,\quad \quad f_2(u,u_x,B)= (3m-1)u_x^2+\frac{7m-1}{m-1}u_xB-2m uB_x,\\
   &f_3(u,u_x,B)=-(m-1)(4m-1)u u_xB_{xx}-(m-1)^2u^2 B_{xxx}-2m^2 u_x^2 B_x.
\end{align*}
Using that $B$ is uniformly bounded in $ C^3_xC_t^0(\mathbb{R}\times [0,\infty))$, $|u|\leq C$, and $|u_x(\cdot,t)|^2\leq C(1+\frac{1}{t+\tau})$, it follows that for some universal $C_1>0$,
\[ \|f_1\|_\infty+\|f_2\|_\infty+\|f_3\|_\infty\leq C_1(1+\frac{1}{t+\tau}).\]
From the above computations, we find $\calL_1(\varphi)-F(\varphi)=0$ where
\[F(\varphi):=-\frac{1}{u}(\varphi-f_1)(\varphi-f_2)
+f_3.\]

Now take $w(x,t):=C_2(1+\frac{1}{t+\tau})$ with $C_2>C_1$.
Then 
\begin{align*}
    \calL_1(w)-F(w)&=-\frac{C_2}{(t+\tau)^2}+\frac{1}{u}(C_2(1+\frac{1}{t+\tau})-f_1)(C_2(1+\frac{1}{t+\tau})-f_2)-f_3\\
    &\geq -\frac{C_2}{(t+\tau)^2}+\frac{1}{C}((C_2-C_1)(1+\frac{1}{t+\tau}))^2-C_1(1+\frac{1}{t+\tau})\geq 0
\end{align*}
for all $t\geq 0$ if $C_2$ is large enough depending only on $C,C_1$. Note that \eqref{3.10} and \eqref{3.11} yield
$
\varphi(x,0)\leq \frac{C}\tau.
$
Hence by further taking $C_2$ to be large enough, we have $w\geq \varphi$ at $t=0$. By comparison principle for the parabolic operator $(\calL_1-F)(\cdot)$, we conclude that $\varphi\leq C_2(1+\frac1{t+\tau})$. Then the definition of $\varphi$ yields
\begin{align*}
    u_t=\varphi-(m-1)u_x^2-u_xB-(m-1)uB_x\leq C(1+\frac{1}{t+\tau})
\end{align*}
for some universal $C>0$. Finally, taking $\tau\to 0$ finishes the proof.
\end{proof}

Viewing $B$ as a given vector filed of $(x,t)$, 
below we show that $B=B(x,t)$ is smooth in space and $C^{1,1}$ in time.

\begin{lemma}\label{L.3.2}
Let $\varrho$ be the solution to \eqref{main} with bounded, non-negative, compactly supported initial data $\varrho_0$, and suppose that $V,W$ are bounded in $ C_{x,t}^\infty(\mathbb{R}\times [0,\infty))$. Let $u$ be its pressure variable. Then for each $k\geq 0$, there exists a constant $C_k>0$ such that
\[\|B\|_{C_x^kC^0_t(\bbR\times [0,\infty))}\leq C_k,\]
and for any $T\geq 1$ and $t\in (0,T]$,
\begin{equation}
    \label{l.3.2}
\|{B}_{t}(\cdot,t)\|_{C_x^k(\bbR)}+\|{B}_{tt}(\cdot,t)\|_{C_x^k(\bbR)}\leq C_kT\l 1+\frac{1}{t}\r.
\end{equation}
\end{lemma}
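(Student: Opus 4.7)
The spatial bound is immediate. I would write $\partial_x^k B = -\partial_x^k V - (\partial_x^k W)*\varrho$, so that
\[
\|\partial_x^k B(\cdot,t)\|_\infty \leq \|\partial_x^k V\|_\infty + \|\partial_x^k W\|_\infty \|\varrho(\cdot,t)\|_{L^1},
\]
and conclude using $L^1$ mass conservation $\|\varrho(\cdot,t)\|_{L^1} = \|\varrho_0\|_{L^1}$ and $V,W \in C_{x,t}^\infty$.

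For the time derivatives the difficulty is that $\varrho = c\, u^{1/(m-1)}$ is not smooth in $u$ when $m > 2$, so $\varrho_t$ is generically singular at the free boundary. My plan is to avoid pointwise time differentiation of $\varrho$ altogether, and instead use the divergence form of the equation
\[
\varrho_t = \partial_x J, \qquad J := (\varrho^m)_x - \varrho B,
\]
together with the algebraic pressure identities $(\varrho^m)_x = \varrho u_x$ and $(\varrho^m)_t = \varrho u_t$ (direct consequences of $u = \tfrac{m}{m-1}\varrho^{m-1}$). By Lemma \ref{L.3.1} and Lemma \ref{L.2.2}, both $J$ and $(\varrho^m)_t$ are compactly supported in $x$ inside an interval of length $O(1+T)$ with $L^\infty$ norm bounded by $C(1+1/t)$.

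For $B_t = -V_t - W_t*\varrho - W*\varrho_t$, I would integrate by parts twice in $y$ to obtain
\[
W*\varrho_t = W_{xx}*\varrho^m - W_x*(\varrho B),
\]
and estimate each convolution in $C_x^k$ by $\|W^{(j)}\|_\infty\bigl(\|\varrho^m\|_{L^1} + \|\varrho B\|_{L^1}\bigr)$, finite by mass conservation. Differentiating once more in $t$ and using $(\varrho B)_t = \varrho_t B + \varrho B_t$ gives
\[
W*\varrho_{tt} = W_{xxt}*\varrho^m + W_{xx}*(\varrho^m)_t - W_{xt}*(\varrho B) - W_x*(\varrho B_t) - W_x*(\varrho_t B).
\]
All terms except the last will be bounded by the same $L^\infty \cdot L^1$ trick, using $(\varrho^m)_t = \varrho u_t$ bounded by $C(1+1/t)$ together with the $B_t$ estimate just proved. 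For the critical remaining term $W_x*(\varrho_t B)$, the scalar $B(y,t)$ sits inside the convolution and cannot be factored out, so I would apply $\varrho_t = \partial_y J$ once more and integrate by parts to rewrite it as
\[
W_x*(\varrho_t B)(x,t) = -\int \partial_y\bigl(W_x(x-y,t)\, B(y,t)\bigr)\, J(y,t)\,dy,
\]
which is bounded in $C_x^k$ using the $C_x^{k+2}$ bound on $B$ from the first paragraph, $\|W\|_{C_x^{k+2}}$, $\|J\|_\infty \leq C(1+1/t)$, and the support size $O(1+T)$. Collecting everything in $B_{tt} = -V_{tt} - W_{tt}*\varrho - 2W_t*\varrho_t - W*\varrho_{tt}$ will yield the stated bound $C_k T(1+1/t)$.

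The hard part is exactly this term $W_x*(\varrho_t B)$: the integration-by-parts trick that disposes of $W*\varrho_t$ does not extend to it because $B$ is trapped inside the convolution. The fix is to again exploit $\varrho_t = \partial_y J$ with a bounded current $J$ and integrate by parts against the full product $W_x(x-y,t)B(y,t)$, using the spatial smoothness of $B$ already obtained at the outset of the proof. Everything else reduces to routine $L^\infty \cdot L^1$ convolution estimates, with $T$ arising from the support length and $1+1/t$ from the $u_x$ and $u_t$ bounds of Lemma \ref{L.3.1}.
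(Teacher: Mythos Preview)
Your approach is correct and genuinely simpler than the paper's. The paper splits into cases: for $m\leq 2$ it observes that $\varrho_t=\tfrac1m(\tfrac{m-1}{m}u)^{(2-m)/(m-1)}u_t$ is bounded pointwise and estimates directly; for $m>2$ this fails, and the paper invokes an $L^p$ bound on $(\varrho^m)_{xx}$ (Lemma~15.7 of \cite{book}) together with a change to streamline coordinates to make sense of $\varrho_t$ in $L^1$ and then estimate the difference $(W*\varrho)_t(x,t)-(W*\varrho)_t(x,t_0)$. Your route bypasses all of this: by writing $\varrho_t=\partial_x J$ with $J=\varrho u_x-\varrho B$ bounded in $L^\infty$ with compact support, every convolution against $\varrho_t$ (or $\varrho_t B$) is converted via one integration by parts into a convolution against $J$, and every convolution against $\varrho_{tt}$ is reduced to the time derivative of such an expression, where the only genuine time derivative appearing is $(\varrho^m)_t=\varrho u_t$, again bounded. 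This unifies the two cases, avoids streamlines entirely, and is more elementary. It also shows, incidentally, that the $\|B_t\|_{C_x^k}$ bound can be taken independent of $t$ and $T$.

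One minor bookkeeping slip: your displayed formula labeled ``$W*\varrho_{tt}=\ldots$'' is actually the expression for $\partial_t(W*\varrho_t)=W_t*\varrho_t+W*\varrho_{tt}$, so it is off by a $W_t*\varrho_t$ term. This is harmless, since $W_t*\varrho_t=(W_t)_{xx}*\varrho^m-(W_t)_x*(\varrho B)$ is bounded by the same mechanism, and in the final expansion $B_{tt}=-V_{tt}-W_{tt}*\varrho-2W_t*\varrho_t-W*\varrho_{tt}$ all such terms are accounted for. You should correct the label when writing it up, but the argument stands.
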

\begin{proof}
From the equation \eqref{main}, $\|\varrho(\cdot,t)\|_{L^1}=\|\varrho_0(\cdot)\|_{L^1}<\infty$. Since $B=-V-W*\varrho$ and $V,W$ are smooth in space, $B$ is also smooth in space.

For the regularity of $B_t$, we start with the case of $m\leq 2$. Since 
\[\varrho_t=\frac{1}{m}\l\frac{m-1}{m}u \r^\frac{2-m}{m-1} u_t,\]
Lemma \ref{L.3.1} and boundedness of the solution imply that for some $C>0$,
\begin{equation}\label{3.9}
    |\varrho_t(x,t)|\leq C(1+\frac1t)\quad\text{ in }\bbR\times (0,\infty).
\end{equation}
Due to Lemma \ref{L.2.2},  for all $t\in (0,T]$, $\varrho(\cdot,t)$ is supported in $B_{CT}$ for some $C>0$.
Thus using smoothness of $V$ and $W$, we get
\begin{align*}
    \|B_t(\cdot,t)\|_{C^k_x(\bbR)}&=\|(V_t+W_t*\varrho+W*\varrho_t)(\cdot,t)\|_{C^k_x(\bbR)}\\
    &\leq C(k)(1+\sup_{x\in\bbR}\int_{|r|\leq CT}|\varrho_t(x+r)|dr)\leq C(k)T(1+\frac1t).
\end{align*}

Next, to show $B_t$ is Lipschitz in time, it suffices to show that $W*\varrho_t$ is Lipschitz in time. For a.e.  $0<s<t<T$, and $x\in\bbR$,
\begin{align*}
    &\quad |(W*\varrho_t)(x,t)-(W*\varrho_t)(x,s)|\\
    &\leq |W(\cdot,t)*\varrho_t(\cdot,t)-W(\cdot,s)*\varrho_t(\cdot,t)|(x)+|W(\cdot,s)*\varrho_t(\cdot,t)-W(\cdot,s)*\varrho_t(\cdot,s)|(x)\\
   & \leq C(t-s)1*|\varrho_t(\cdot,t)|(x)+|W(\cdot,s)*\varrho_t(\cdot,t)-W(\cdot,s)*\varrho_t(\cdot,s)|(x)=:A_1+A_2.
\end{align*}
Since $\varrho_t$ is supported in $B_{CT}$, we use \eqref{3.9} to get
\[
A_1\leq CT(1+\frac{1}{s})(t-s).
\]
As for $A_2$, because $\varrho_t(\cdot,t)$ with $t>0$ is an almost everywhere well-defined $L^\infty$ function, \eqref{1.1} yields
\[A_2=\left|\int_\bbR W(x-y,s)\Big[((\varrho^m)_{xx}-(\varrho B)_x)(y,t)-((\varrho^m)_{xx}-(\varrho B)_x)(y,s)\Big]\, dy\right|.\]
Since $\varrho$ is supported in $B_{CT}$, we can change the domain of the integration to $B_{CT}$.
Notice that due to $|u_x|<\infty$, $(\varrho^m)_x=C_m u^\frac{1}{m-1}u_x=0$ on the free boundary, and so applying integration by parts, we get
\begin{align*}
A_2&=\left|\int_{B_{CT}} W_x(x-y,s)\Big[((\varrho^m)_{x}-(\varrho B))(y,t)-((\varrho^m)_{x}-(\varrho B))(y,s)\Big]\, dy\right|\\
&\leq\left|\int_{B_{CT}} W_{xx}(x-y,s)(\varrho^m(y,t)-\varrho^m(y,s))\, dy\right|+\left|\int_{B_{CT}} W_x(x-y,s)((\varrho B)(y,t)-(\varrho B)(y,s))\, dy\right|\\
    &= m\left|\int_{B_{CT}}\int^t_s W_{xx}(x-y,s)\varrho^{m-1}(y,\tau)\varrho_t(y,\tau)\,d\tau dy\right| +\left|\int_{B_{CT}}\int^t_s W_{x}(x-y,s)(\varrho B)_t(y,\tau)\,d\tau dy\right|\\
    &\leq CT(t-s)(1+\frac{1}{s}),
\end{align*}
where in the last inequality we used \eqref{3.9}, and $\varrho, |W_x|, |W_{xx}|,|B|\leq C$ for some universal $C>0$, and $|B_t|\leq CT(1+\frac{1}{s})$ in $\bbR\times [s,T]$.


Overall, we find that there exists $\beta(\cdot,t)=B_t(\cdot,t)$ for a.e. $t\in (0,T)$ such that $\beta(\cdot,\cdot)$ is Lipschitz continuous in time with bound $CT\left(1+\frac{1}{t}\right)$ in $\bbR\times [t,T]$, and $\beta,B$ are smooth in space. Using \eqref{3.9} and the definition of $B$ yields that $B(x,t)=\int_1^t\beta(x,s)ds+B(x,1)$ is continuously differentiable (in both $x$ and $t$) for all $t>0$. Since $\beta$ is Lipschitz in time, we obtain Lipschitz continuity of $B_t$ in time.
Similarly we can get for all $k\geq 0$,
\[\|{B}_{tt}(\cdot,t)\|_{C^k_x(\bbR)}\leq C_kT(1+\frac{1}{t}).\]


When $m>2$, again by Lemma \ref{L.3.1}, in the positive set of $\varrho$,
\beq\lb{3.92}
\begin{aligned}
    \varrho_t=\frac{1}{m}\l\frac{m-1}{m}u \r^\frac{2-m}{m-1} u_t\leq C\left(1+\frac{1}{t}\right)\varrho^{{2-m}}\quad\text{  and }\quad |\varrho_x|\leq C(1+\frac1t)^{1/2}\varrho^{2-m}.
\end{aligned}
\eeq
So we deduce
\[
(\varrho^m)_{xx}=\varrho_t+(B\varrho)_x\leq C(1+\frac1t)\varrho^{2-m}\quad\text{ in }\{\varrho>0\}.
\]
Therefore the bounded non-negative continuous function $w(\cdot,t):=(1+\frac1t)^{-{\gamma}}\varrho^m(\cdot,t)$ with ${\gamma}:=\frac{m}{2(m-1)}$ (then $\gamma\in (\frac12,1)$ by $m>2$) satisfies for some universal $C>0$,
\begin{equation}\label{3.4}
\begin{aligned}
(w)_{xx}&=(1+\frac1t)^{-{\gamma}}(\varrho^m)_{xx}\leq C(1+\frac1t)^{1-{\gamma}}\varrho^{2-m}\leq  Cw^{-\frac{m-2}{m}}\quad \text{ in }\{w>0\}.
\end{aligned}
\end{equation}
It follows from the fundamental estimate that
\begin{equation}
    \label{3.3}
    (w^{\frac{m-1}{m}})_{xx}=(1+\frac1t)^{-\frac{(m-1){\gamma}}{m}}(\varrho^{{m-1}})_{xx}\geq 
    -C\quad \text{ in } \quad \calD'(\bbR).
\end{equation}
With properties \eqref{3.4} and \eqref{3.3}, Lemma 15.7 \cite{book} implies that
$w_{xx}$ is bounded in $L_{\text{loc}}^p(\bbR)$ for any $p\in [1,\frac{m-1}{m-2})$. 
Moreover the $L_{\text{loc}}^p(\bbR)$ bound is independent of $t$ and the location: 
\begin{equation}\label{3.5}
    \int_{a-1}^{a+1}|w_{xx}(x,t)|^p dx\quad\text{ is uniformly bounded for all $a\in\bbR$, and $t>0$.}
\end{equation}
This and \eqref{main} yield
\begin{equation}\label{3.8}
    Z:=(1+\frac1t)^{-{\gamma}} (\varrho_t+B\varrho_x)=(w)_{xx}-(1+\frac1t)^{-{\gamma}} B_x\varrho\in L^\infty((0,\infty),L^p_{\text{loc}}(\bbR)).
\end{equation}

Now for any fixed $t_0>0$, we use the substitution of streamlines
$
Y:=Y(t)=Y(y,t_0;t-t_0)$ (then $y\to Y(y,t_0;t-t_0)$ is a bijection when $|t-t_0|$ is small enough). With this coordinate, due to $Y_t(t)=B(Y(t),t)$, we have
\beq\lb{3.88}
\partial_t \varrho(Y(t),t) =\varrho_t(Y(t),t)+ (B \varrho_x)(Y(t),t)=(1+\frac1t)^\gamma Z(Y(t),t).
\eeq
Since $\partial_y\partial_t Y=\partial_Y B(Y,t)\partial_yY$ and $Y_y(t_0)=1$, $\partial_yY$ and $\partial_y\partial_t Y$ are uniformly bounded for all $y\in\bbR$ when $|t-t_0|$ is small enough depending on $\|B\|_{C_x^1C_t^0}$. Moreover, we have
\begin{align}\lb{3.881}
    (W*\varrho)_t(x,t_0)= \partial_t\int_{y\in\bbR} W(x-y,t)\varrho(y,t)dy \big|_{t=t_0} =\partial_t\int_{y\in\bbR} W(x-Y,t)\varrho(Y,t)Y_y dy \big|_{t=t_0}.
\end{align}
Recall that $\varrho(\cdot,t)$ is compactly supported in $B_{CT}$ for $t\leq T$.
Using \eqref{3.5}--\eqref{3.881} with $p=1$, and $|W_x|,|W_t|$, $|Y_t|$ and $|\partial_y\partial_tY|$ are bounded, we obtain
\begin{equation}
    \label{3.6}
    \begin{aligned}
|(W*\varrho)_t(x,t_0)|&\leq C+  \int_{B_{CT}}W(x-Y,t_0) \partial_t\varrho(Y,t_0)Y_y d y  \\
&\leq C+ C(1+\frac1{t_0})^\gamma \int_{B_{CT}}|W(x-y,t_0) Z(y,t_0)|d y  \\
&\leq C+ C(1+\frac{1}{t_0})^{\gamma}\|W(x-\cdot)\|_{\infty}\|Z(\cdot,t_0)\|_{L^1(B_{CT})}\leq CT(1+\frac{1}{t_0})^{\gamma}.
\end{aligned}
\end{equation}
Similarly for any $k\geq 0$, since $W$ is bounded in $C_x^kC_t^1$, we get $\|(W*\varrho)_t(\cdot,t)\|_{C^k_x}\leq C_kT (1+\frac1t)^{\gamma}$ which implies that for $t\in (0,T]$, 
\beq\lb{3.90}
\|B_t(\cdot,t)\|_{C^k_x(\bbR)}\leq C_kT(1+\frac1t)^{\gamma}\leq C_kT(1+\frac1t).
\eeq

Now we are left to show that $\partial_x^k\partial_tB$ is locally Lipschitz in time. Let us only consider the case of $k=0$ and prove the following for all $x\in\bbR$, and $0<t_0<t<T$ such that $t-t_0$ is small enough,
\beq\lb{3.86}
|(W*\varrho)_t(x,t)-(W*\varrho)_t(x,t_0)|\leq CT(t-t_0)(1+\frac{1}{t_0}).
\eeq
Let $Y(t)=Y(y,t_0;t-t_0)$ be as before. Note that \eqref{4.1} yields
\[
\partial_t^2Y(y,t_0;t-t_0)=  B_{Y}(Y,t)Y_t+B_{t}(Y,t)=B_{Y}(Y,t)B(Y,t)+B_{t}(Y,t),
\]
\[
\partial_y\partial_t^2Y(y,t_0;t-t_0)=  B_{YY}(Y,t)Y_yY_t+B_{Yt}(Y,t)Y_y+B_Y(Y,t)Y_{yt}
\]
where $B_{Yt}:=\partial_Y\partial_tB$ and $Y_{yt}:=\partial_y\partial_tY$. 
Thus,  by \eqref{3.90}, $\partial_t^2Y$ and $\partial_y\partial_t^2Y$ are bounded by $CT(1+\frac{1}{t_0})^\gamma$ for all $y\in\bbR$ and $t$ such that $t-t_0$ is small enough. This yields that $\partial_t^2 W(x-Y,t)$ is also bounded by $CT(1+\frac{1}{t_0})^\gamma$ for these $t$.
Then, arguing similarly as before and using \eqref{3.88}, the left-hand side of \eqref{3.86} satisfies
\begin{align*}
    &=\left|\partial_t\left(\int_{y\in\bbR} W(x-Y,\cdot)\varrho(Y,\cdot) Y_ydy\right) (t)-\partial_t\left(\int_{y\in\bbR} W(x-Y,\cdot)\varrho(Y,\cdot) Y_y dy\right)(t_0)\right|\\
    &\leq CT(t-t_0)(1+\frac{1}{t_0})^\gamma +\left|\left(\int_{y\in\bbR} W*(\varrho^m)_{xx}dy\right) (x,t)-\left(\int_{y\in\bbR} W*(\varrho^m)_{xx} dy\right)(x,t_0)\right|\\
    &\qquad\qquad +\underbrace{\left|\left(\int_{y\in\bbR} W*(B_x\varrho)dy\right) (x,t)-\left(\int_{y\in\bbR} W*(B_x\varrho) dy\right)(x,t_0)\right|}_{A_3:=}\\
    &\leq CT(t-t_0)(1+\frac{1}{t_0})^\gamma +\left|\left(\int_{y\in\bbR} W_{xx}*\varrho^m dy\right) (x,t)-\left(\int_{y\in\bbR} W_{xx}*\varrho^m dy\right)(x,t_0)\right|+A_3\\
    &\leq CT (t-t_0) (1+\frac{1}{t_0}),
\end{align*}
where we used $(\varrho^m)_x=\varrho^m=0$ on the free boundary, $\gamma<1$, and $|(\varrho^m)_t|\leq C(1+\frac1{t_0})$ in $\bbR\times [t_0,\infty)$ by \eqref{3.92}. In the last inequality, by \eqref{3.90}, $A_3\leq CT(t-t_0) (1+\frac{1}{t_0})^\gamma$ follows in the same way as we derive \eqref{3.6}.  We finished the proof of \eqref{3.86}.

\end{proof}

\begin{remark}\label{R.3.3}
The restriction of compactly supported solutions can be removed if we assume that the solution is uniformly bounded and $W$ satisfies the following integrability condition:
\begin{itemize}
    \item[(I)] If $m\leq 2$, $W^{j}(\cdot,t)\in L^1(\bbR)$ for all $t\geq 0$ and $j\in\bbN$; 
    if $m>2$, for some $q>m-1$, and for each $j\in\bbN$,
    \[
    \sum_{k\in\bbN}\left( \|W^{(j)}(\cdot,t)\|_{L^q([k,k+1])}+\|W^{(j)}_t(\cdot,t)\|_{L^q([k,k+1])}\right)<\infty,\]
locally uniformly in $t\in [0,\infty)$.
\end{itemize}
Under condition (I), \eqref{l.3.2} holds for some constant $C=C(T)$.
The proof is in the same spirit by making use of \eqref{3.8} for $p\in [1,\frac{m-1}{m-2})$. However the computations are more complicated, and we skip them. We will only consider solutions with compact support later. 

\end{remark}

\subsection{Equation of the free boundary}\label{ss.3.1}


In this subsection, we introduce Darcy's law and the equation of the free boundary. The discussions are parallel to those in section 15 of \cite{book} for the zero drift case.


The following lemma is the same as Lemma 3.3 \cite{kim2018fb} which mainly says that a streamline cannot leave the support of the solution as time evolves.

\begin{lemma}\label{L.4.1}
Let $u$ be a non-negative solution to \eqref{1.2} with compactly supported initial data.
The set $\bigcup_{t>0}(\overline{\{u(\cdot,t)>0\}}\times \{t\})$ is non-contracting along the streamlines i.e. if $u(x_0,t_0)>0$, then $u(X(x_0,t_0;t),t_0+t)>0$ for all $t\geq 0$.
\end{lemma}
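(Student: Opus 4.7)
The plan is to construct, for each $(x_0,t_0)$ with $u(x_0,t_0)>0$, a classical subsolution $\psi$ of the density equation \eqref{1.1} whose support follows the streamline $Y(t):=X(x_0,t_0;t-t_0)$ and remains nonempty for all $t\geq t_0$. Since $B$ is globally Lipschitz in $x$ and bounded in $t$ by Lemma \ref{L.3.2}, $Y(\cdot)$ is globally defined and Lipschitz. By the continuity of $u$ proved in Lemma \ref{L.3.1}, there exist $\epsilon,\delta>0$ such that $\varrho(x,t_0)\geq \varrho_*:=\bigl(\tfrac{(m-1)\epsilon}{m}\bigr)^{1/(m-1)}$ whenever $|x-x_0|\leq \delta$, which will serve as the initial comparison.

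The ansatz I would try is a moving Barenblatt-type bump
\[
\psi(x,t)=\bigl(a(t)-k(t)(x-Y(t))^2\bigr)_+^{1/(m-1)},\qquad t\geq t_0,
\]
with $a(t_0)$ chosen below $\varrho_*^{m-1}$ and $k(t_0)$ chosen large enough that the initial support $\{x:k(t_0)(x-x_0)^2\leq a(t_0)\}$ lies in $[x_0-\delta,x_0+\delta]$; this secures $\psi(\cdot,t_0)\leq \varrho(\cdot,t_0)$. Setting $\xi:=x-Y(t)$ and using $\dot Y(t)=B(Y(t),t)$, the inequality $\psi_t-(\psi^m)_{xx}+(B\psi)_x\leq 0$ reduces, inside the positivity set, to a condition of the form
\[
\tfrac{1}{m-1}\bigl(\dot a-\dot k\,\xi^2\bigr)\leq R_0(a,k)+R_1(a,k,\xi)+R_2(a,k,\xi),
\]
where $R_0$ collects the pure PME contribution (producing the usual $-Ck$ and $+Ck^2\xi^2$ terms), $R_1=-(B(\xi+Y(t),t)-B(Y(t),t))\psi_x/\psi^{2-m}$ is the drift perturbation, and $R_2$ comes from $(m-1)\psi B_x(\xi+Y(t),t)$. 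On the support $|\xi|\leq \sqrt{a/k}$ one has $|B(\xi+Y,t)-B(Y,t)|\leq \|B_x\|_\infty|\xi|$, so $R_1$ and $R_2$ are bounded in terms of $\|B\|_{C^2_xC^0_t}$ times polynomial expressions in $a,k$.

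Matching coefficients of $1$ and $\xi^2$ then yields a system
\[
\dot a\geq -C_1 a,\qquad \dot k\leq C_2\bigl(k^2+k\bigr),
\]
which admits a solution on $[t_0,\infty)$ with $a(t)>0$ for every $t$ (exponential lower bound) and $k(t)$ finite on any compact time interval. This guarantees $\psi(Y(t),t)=\bigl(a(t)/1\bigr)^{1/(m-1)}>0$. Finally, the comparison principle (Theorem \ref{thm:comp}), justified at the free boundary of $\psi$ by Lemma \ref{L.2.4} (the profile $\psi^\alpha$ is Lipschitz for any $\alpha\geq (m-1)/2$, and $\Gamma(\psi)$ is a pair of smooth curves of Hausdorff dimension $1$), gives $\varrho\geq \psi$, whence $u(Y(t),t)>0$ for all $t\geq t_0$.

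The main obstacle is the bookkeeping in the second paragraph: the drift error $R_1$ is odd in $\xi$, so it does not contribute a clean term of fixed parity and must be dominated by the PME terms $-Ck\cdot 1+Ck^2\xi^2$ uniformly on the support. This is precisely where the flexibility to take $k(t_0)$ very large, i.e. the support initially very thin, is used: the drift perturbation is $O(\sqrt{a/k})$ on the support and becomes negligible compared with the self-diffusive terms. Global solvability of the resulting ODE system, rather than finite-time extinction of $a$, then follows because the Lipschitz bound on $B$ only produces a linear coefficient in the inequality for $\dot a$.
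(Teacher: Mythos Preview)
The paper does not give a self-contained proof here; it simply refers to Lemma~3.3 of \cite{kim2018fb}. Your strategy---a moving Barenblatt profile centred on the streamline, combined with the comparison tools of Lemma~\ref{L.2.4} and Theorem~\ref{thm:comp}---is precisely the standard one and matches that reference.

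Your ODE bookkeeping, however, has the signs reversed in a way that leaves a gap. Writing $w=a-k\xi^{2}$ and inserting $\psi=w_{+}^{1/(m-1)}$ into $\psi_{t}-(\psi^{m})_{xx}+(B\psi)_{x}\leq 0$, one finds (after multiplying by $(m-1)w^{(m-2)/(m-1)}$)
\[
\bigl[\dot a+2mka+(m-1)B_{x}\,a\bigr]\;+\;\bigl[-\dot k-\Lambda k^{2}+O(k)\bigr]\xi^{2}\;+\;2k\xi\bigl(B(Y,t)-B(x,t)\bigr)\;\leq\;0,
\]
with $\Lambda=\tfrac{2m(m+1)}{m-1}$. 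The cross term is bounded by $2\|B_{x}\|_{\infty}k\xi^{2}$ and is absorbed by $-\Lambda k^{2}\xi^{2}$ once $k$ is moderately large, so no constraint of the form $\dot k\leq C_{2}(k^{2}+k)$ arises. The binding condition sits at $\xi=0$ and reads $\dot a\leq -(2mk-(m-1)\|B_{x}\|_{\infty})\,a$, an \emph{upper} bound on $\dot a$, not the lower bound $\dot a\geq -C_{1}a$ you record. The clean fix is to take $k\equiv k_{0}$ constant (large enough that the $\xi^{2}$ bracket is negative and the initial support lies in $[x_{0}-\delta,x_{0}+\delta]$) and $a(t)=a_{0}\exp\bigl(-(2mk_{0}+(m-1)\|B_{x}\|_{\infty})(t-t_{0})\bigr)$; then the subsolution inequality holds on the whole positivity set, $a(t)>0$ for all $t\geq t_{0}$, and comparison gives $\varrho(Y(t),t)\geq a(t)^{1/(m-1)}>0$. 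As written, your system permits finite-time blowup of $k$ while omitting the true coupling $\dot a\leq -2mka$ that would then drive $a$ to zero, so the argument does not close without this correction.
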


In view of the finite propagation property and the fact that the initial data is compactly supported, we can define the right-hand side free boundary as
\begin{equation}
    \label{d.4.1}
    r(t):=\sup\{x\,|\,\varrho(x,t)>0\}=\sup\{x\,|\,u(x,t)>0\}.
\end{equation}

As derived formally in the introduction, we have Darcy's law: $r'(t)=-u_x(r(t),t)+B(r(t),t)$. We prove it below.

\begin{lemma}\label{L.4.2}
For every $t>0$, the following limits exist
\[D_x^- u(r(t),t)=
\lim_{x\to r(t)^-}u_x(x,t),\quad D_t^+ r(t)=\lim_{h\to 0^+}\frac{1}{h}(r(t+h)-r(t)).\]
Moreover, Darcy's law holds in the form
\begin{equation*}
    D_t^+r(t)=-D_x^-u(r(t),t)+B(r(t),t).
\end{equation*}
In particular since $u_x$ and $B$ are bounded, the free boundary is Lipschitz continuous in time.
\end{lemma}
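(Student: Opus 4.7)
The plan has two main parts. First, I would establish existence of $D_x^- u(r(t), t)$ using semi-convexity. By Proposition~\ref{T.2.3}, $u_{xx}(\cdot, t) \geq -C$ in $\mathcal D'$, so combined with the Lipschitz bound from Lemma~\ref{L.3.1}, $u_x(\cdot, t)$ is of bounded variation on compact intervals and hence has a left limit at $r(t)$. Since $u \geq 0$ vanishes at $r(t)$, this limit is nonpositive; write $a := -D_x^- u(r(t), t) \geq 0$. The semi-convexity and the value of this limit together yield the matching linear asymptotics
\[ u(x, t) = a(r(t) - x) + o(r(t) - x) \quad \text{as } x \to r(t)^-, \]
which is the crucial input for the barrier constructions below.

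Second, I would prove $D_t^+ r(t) = a + B(r(t), t)$ by sandwiching the free boundary between travelling-wave-type sub- and super-solutions of the pressure equation \eqref{1.2}. For the upper bound, fix $\epsilon > 0$ and pick $\delta \in (0, \epsilon)$ so that $u(\cdot, t) \leq (a + \epsilon/2)(r(t) - \cdot)$ on $[r(t) - \delta, r(t)]$. Set
\[ \overline\psi(x, s) := (a + \epsilon)\bigl(r(t) + \mu + v(s - t) - x\bigr)_+, \]
with $v := (a + \epsilon) + \sup_Q B + \lambda$ on the rectangle $Q := [r(t) - \delta, r(t) + \mu + v h_0] \times [t, t + h_0]$, where $\lambda > 0$ slightly exceeds $m \|B_x\|_\infty \cdot \operatorname{diam}_x Q$. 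A direct computation on $\{\overline\psi > 0\}$ (using $\overline\psi_{xx} = 0$ there) shows $\overline\psi$ satisfies the classical supersolution inequality for \eqref{1.2} on $Q$; since its free boundary is the smooth graph $\{x = r(t) + \mu + v(s-t)\}$, Lemma~\ref{L.2.4} upgrades this to a weak supersolution of \eqref{1.1}. The asymptotics of $u$ give $\overline\psi \geq u$ on the parabolic boundary of $Q$ with strict excess (absorbed at $x = r(t) - \delta$ by continuity for short times), so Theorem~\ref{thm:comp} yields $u \leq \overline\psi$ in $Q$, hence $r(t + h) \leq r(t) + \mu + v h$. Sending $\mu \to 0$, $h_0 \to 0$, $\delta \to 0$ (noting $\sup_Q B \to B(r(t), t)$ and $\lambda \to 0$ as the width of $Q$ shrinks) and finally $\epsilon \to 0$ gives $\limsup_{h \to 0^+} h^{-1}(r(t+h) - r(t)) \leq a + B(r(t), t)$.

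The matching lower bound is analogous for $a > 0$, using $\underline\psi(x, s) = (a - \epsilon)(\xi(s) - x)_+$ as a subsolution moving at a velocity slightly below $(a - \epsilon) + B(r(t), t)$ so as to absorb the drift error with the opposite sign. When $a = 0$ the subsolution degenerates, but the lower bound $D_t^+ r(t) \geq B(r(t), t)$ then follows directly from Lemma~\ref{L.4.1}: picking $x_n \to r(t)^-$ with $u(x_n, t) > 0$, one has $r(t + h) \geq X(x_n, t; h) = x_n + h B(x_n, t) + O(h^2)$, and sending $x_n \to r(t)$ and $h \to 0^+$ yields the bound. Combining gives Darcy's law, and Lipschitz continuity of $r$ on $[\tau, T]$ follows because the right-hand side $-D_x^- u + B$ is uniformly bounded by Lemmas~\ref{L.3.1} and~\ref{L.3.2}.

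The main technical obstacle is the bookkeeping in the barrier construction: the drift contributes the inhomogeneous term $(m-1)(\eta - x) B_x$ in the supersolution inequality, which is $O(\operatorname{diam}_x Q)$ and must be absorbed by a velocity correction $\lambda$. The fix is to shrink the spatial extent of $Q$ together with $\epsilon$ so that $\lambda$ vanishes in the limit, while correlating $\delta$ with $\epsilon$ so the asymptotic bound on $u$ remains available on the shrinking region. The upgrade of the classical sub/supersolution to a weak one across the moving interior boundary is handled by Lemma~\ref{L.2.4} since $\{x = \eta(s)\}$ (resp.\ $\{x = \xi(s)\}$) is a $C^1$ graph of Hausdorff dimension $1$ in one-dimensional space-time.
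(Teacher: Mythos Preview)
Your proposal is correct and follows essentially the same approach as the paper: semi-convexity (Proposition~\ref{T.2.3}) for existence of $D_x^-u(r(t),t)$, linear travelling-wave barriers plus comparison (Theorem~\ref{thm:comp}, Lemma~\ref{L.2.4}) for Darcy's law when $a>0$, and Lemma~\ref{L.4.1} for the lower bound when $a=0$. The only cosmetic difference is that the paper freezes the drift at the boundary value $B(r(t),t)$ in its barriers $L_\eps^\pm(x,s)=(a\pm\eps)\bigl((a\pm2\eps)s-x+B(r(t),t)s\bigr)_+$ and absorbs the error via the Lipschitz bound $|B(x,s)-B(r(t),t)|\leq C(|x|+|s|)$, whereas you use $\sup_Q B$ and an explicit velocity correction $\lambda$; both mechanisms vanish as the comparison box shrinks.
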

\begin{proof}

The existence of the two limits mainly follows from the fundamental estimate and the regularity of $B$ established in Section \ref{s.3}. The proof is the same as the one for the zero-drift case. We refer readers to Theorem 7.2 \cite{Knerr} and Theorem 15.19 \cite{book}. 

Now let us show Darcy's law, the proof of which is in the same spirit of the proof of Theorem 15.19 \cite{book} (though we need a slightly different barriers). Take one right-hand side free boundary point $(x_0,t_0)$ with $t_0>0$ and by shifting the coordinates, we assume it is $(0,0)$ i.e. $r(0)=0$. Denote $a=-D_x^-u(0,0)\geq 0$. For any small $\eps>0$, consider the following linear functions
\[L^+_\eps(x,t):=(a+\eps)((a+2\eps)t-x+B(0,0)t)_+,\]
\[L^-_\eps(x,t):=(a-\eps)((a-2\eps)t-x+B(0,0)t)_+.\]

We want to compare $u(x,t)$ with $L^+_\eps$ in a domain of the form $R({\delta,\tau})=\{|x|<\delta, t\in (0,\tau)\}$.
By the definition of $a$, if $\delta$ is small enough, $L^+_\eps(x,0)=(a+\eps)(-x)_+\geq u(x,t)$ on the bottom of $R(\delta,\tau)$ and $u(-\delta,0)<L^+_\eps(-\delta,0)$. By continuity of $u$, there is $\tau>0$ such that $ u(-\delta,t)\leq L^+_\eps(-\delta,t)$ for all $t\in [0,\tau)$. Moreover by continuity of $r(t)$, after further assuming $\tau$ to be small enough, we get $u(\delta,t)=L^+_\eps(\delta,t)=0$ for all $t\in [0,\tau)$. 

Next let us check that $L_\eps^+$ is a supersolution to \eqref{1.2}. Indeed in the positive set of $L_\eps^+$, we have
\begin{equation}
    \label{calL}
    \begin{aligned}
\calL(L_\eps^+)&:= (L_\eps^+)_t-(m-1)(L_\eps^+)_{xx}L_\eps^+-|(L_\eps^+)_x|^2+(L_\eps^+)_xB(x,t)+(m-1)L_\eps^+B_x(x,t)\\
    &=(a+\eps)(a+2\eps)+(a+\eps)B(0,0)-(a+\eps)^2-(a+\eps)B+(m-1)L_\eps^+ B_x\\
    &\geq (a+\eps)\eps-C(a+\eps)(|x|+|t|)-C(m-1)L_\eps^+.     
    \end{aligned}
\end{equation}
Here we used the estimate that $B$ is Lipschitz continuous in both space and time.
Now if further letting $\delta,\tau$ to be small enough, we get $|x|+|t|<<\eps$ and $L_\eps^+<<a\eps$, and then $\calL(L_\eps^+)\geq 0$. So $L_\eps^+$ is a supersolution to $\calL$ in $R(\delta,\tau)$. By comparison $u(x,t)\leq L_\eps^+(x,t)$ in $R(\delta,\tau)$. Therefore, the free boundary of $u$ lies to the left of that of $L_\eps^+$ within a short time which implies
\[r(t)\leq r(0)+(a+2\eps)t+B(0,0)t\quad \text{ for }t\leq \tau.\]
We find
\begin{equation}
    \label{e.3}
    D_t^+r(0)=\lim_{h\to 0^+}\frac{1}{h}(r(h)-r(0))\leq a+2\eps+B(0,0).
\end{equation}
After passing $\eps\to 0$, we obtain $D_t^+r(0)\leq a+B(0,0)$.

If $a=0$, it follows from Lemma \ref{L.4.1} and Lipschitz continuity of $B$ that $D_t^+r(0)\geq B(0,0)$ which, combining with \eqref{e.3}, yields the proof. If $a>0$, similarly as done in the above argument, we can show $u\geq L_\eps^-$ in a small neighbourhood of the free boundary point, which implies that
\[D_t^+r(0)=\lim_{h\to 0^+}\frac{1}{h}(r(h)-r(0))\geq a+B(0,0).\]
Overall, we proved for free boundary point $(r(t_0),t_0)$ that $D_t^+r(t_0)=-D_x^-u(r(t_0),t_0)+B(r(t_0),t_0)$.

\end{proof}

\section{Non-degeneracy of the Free Boundary}\label{s.4}

The goal of this section is to prove that if the free boundary is non-degenerate at one time, then non-degeneracy preserves for all time. For this purpose, we only need $V,W\in C_{x}^3C_t^2(\mathbb{R}\times [0,\infty))$.
Throughtout the rest of the paper, let us write the right-hand side free boundary of the solution $u$ (or $\varrho$) as $r(t)$, see \eqref{d.4.1}.


\begin{theorem}\label{T.5.1}
Let $\varrho$ be the solution to \eqref{main} with bounded, non-negative, compactly supported initial data $\varrho_0$, and $V,W\in C_{x}^3C_t^2(\mathbb{R}\times [0,\infty))$. Let $u$ be its pressure variable. Suppose for some $t_0>0$, $-D_x^-u(r(t_0),t_0)>0$. Then for any $T>t_0$ there exists $\sigma=\sigma(t_0,T)>0$ such that
 \[-D_x^-u(r(t),t)\geq -e^{-\sigma (t-t_0)}D_x^-u(r(t_0),t_0)>0\quad \text{ for all }\quad t\in [ t_0,T].\]
\end{theorem}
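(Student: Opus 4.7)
Denote $a(t) := -D_x^- u(r(t), t)$, which by Lemma~\ref{L.4.2} exists for every $t > 0$ and satisfies Darcy's law $r'(t) = a(t) + B(r(t), t)$. The plan is to obtain the claimed exponential lower bound through an open-closed continuation argument on the set $E := \{t \in [t_0, T] : a(s) \geq a(t_0) e^{-\sigma(s - t_0)} \text{ for all } s \in [t_0, t]\}$. The core ingredient is a local statement: for every $t_1 \in [t_0, T]$ with $a(t_1) > 0$, there exist a right-neighborhood $[t_1, t_1 + \tau]$ and a universal $\sigma = \sigma(T) > 0$ depending only on $T$ and the norms furnished by Lemma~\ref{L.3.2} such that $a(s) \geq a(t_1) e^{-\sigma(s - t_1)}$ on that neighborhood. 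The lower semicontinuity of $a(\cdot)$ produced by the barrier construction would make $E$ closed, and this local statement would make it open in $[t_0, T]$.

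The local step will be carried out by comparison against an explicit subsolution of the pressure equation \eqref{1.2}. For $\alpha_0 \in (0, a(t_1))$ and $\sigma$ to be determined, I would consider the corner barrier
\begin{equation*}
\psi(x, s) := \alpha(s)\bigl[\xi(s) - x\bigr]_+, \qquad \alpha(s) := \alpha_0 e^{-\sigma(s - t_1)},
\end{equation*}
whose moving front $\xi(s)$ is a \emph{second-order-in-time Taylor approximation} of the streamline issuing from $(r(t_1), t_1)$, supplemented by the cumulative self-propagation:
\begin{equation*}
\xi(s) := r(t_1) + B(r(t_1), t_1)(s - t_1) + \tfrac{1}{2}[B_t + B B_x](r(t_1), t_1)(s - t_1)^2 + \int_{t_1}^{s} \alpha(\tau)\, d\tau.
\end{equation*}
Using $\psi_{xx} \equiv 0$ on $\{\psi > 0\}$ and Taylor-expanding $B$ around $(r(t_1), t_1)$, the quadratic correction to $\xi$ is tailored to exactly cancel the $O(s - t_1)$ discrepancy between $B(\xi, s)$ and $B(r(t_1), t_1)$ at the front, reducing the residual to
\begin{equation*}
\calL \psi := \psi_t - (m-1)\psi\psi_{xx} - |\psi_x|^2 + \psi_x B + (m-1)\psi B_x \leq (\xi - x)\bigl[\alpha' + m\alpha \|B_x\|_\infty\bigr] + O((s - t_1)^2).
\end{equation*}
Choosing $\sigma > m\|B_x\|_{L^\infty(\bbR \times [t_0, T])}$ and $\tau$ small makes $\calL \psi \leq 0$ in $\{\psi > 0\}$, and Lemma~\ref{L.2.4} lifts $\psi$ to a bona fide subsolution across its corner. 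The $C_x^3 C_{t,\loc}^{1,1}$ regularity of $B$ provided by Lemma~\ref{L.3.2}, and in particular the bound on $B_{tt}$ in \eqref{l.3.2}, is essential for controlling the $O((s-t_1)^2)$ remainder uniformly in $t_1 \in [t_0, T]$.

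For the comparison I would work in a thin parabolic cylinder $Q := [r(t_1) - \rho, r(t_1) + \rho] \times [t_1, t_1 + \tau]$. The initial inequality $\psi(\cdot, t_1) \leq u(\cdot, t_1)$ is forced by $\alpha_0 < a(t_1)$ together with $\xi(t_1) = r(t_1)$; strict positivity of $u$ at $(r(t_1) - \rho, t_1)$ and continuity handle the left lateral boundary; and the Lipschitz bound on $r(\cdot)$ (Lemma~\ref{L.4.2}) makes both $\psi$ and $u$ vanish on the right lateral boundary once $\tau < \rho/\|r'\|_\infty$. Theorem~\ref{thm:comp} then yields $\psi \leq u$ in $Q$, so in particular $\xi(s) \leq r(s)$ throughout $[t_1, t_1 + \tau]$. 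To extract slope information at $r(s)$, I would combine this inclusion with Darcy's law and the streamline ODE: subtracting them and using Lipschitz continuity of $B$ produces
\begin{equation*}
\int_{t_1}^{s} a(\tau)\, d\tau \geq \int_{t_1}^{s} \alpha(\tau)\, d\tau - C(s - t_1)^2,
\end{equation*}
from which division by $s - t_1$, passage to the limit $s \to t_1^+$, and sending $\alpha_0 \nearrow a(t_1)$ give $\liminf_{s \to t_1^+} a(s) \geq a(t_1)$, i.e.\ the right lower semicontinuity of $a$. Iterating the same argument at each $t_1' \in [t_1, t_1 + \tau]$ and bootstrapping upgrades this to the pointwise bound $a(s) \geq a(t_1) e^{-\sigma(s - t_1)}$ throughout the interval.

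The chief obstacle will be the subsolution verification: the precise cancellation of the first-order-in-time mismatch in $B$ at the front of $\psi$ relies on the quadratic choice of $\xi$ and forces the use of $B_t$, $B_{xt}$, and $B_{tt}$ in essential norms. Since, as explained after Theorem~\ref{T.1.4}, only $C_{t,\loc}^{1,1}$ time regularity of $B$ is available when $W \not\equiv 0$ and $1/(m-1) \notin \bbN$, one cannot Taylor-expand $B$ beyond second order in time, so the $O((s - t_1)^2)$ remainder has to be absorbed uniformly through the choice of $\sigma$ rather than eliminated by a higher-order correction. A secondary difficulty is the slope extraction itself: because $\xi(s) < r(s)$ strictly for $s > t_1$, direct touching of $\psi$ and $u$ at $r(s)$ fails, and the gap must be closed by the integrated Darcy argument sketched above.
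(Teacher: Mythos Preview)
The essential gap is in the slope extraction. Since the fundamental estimate only gives $u(x,t_1)\ge a(t_1)(r(t_1)-x)-\tfrac{C}{2}(r(t_1)-x)^2$, a linear profile of slope $a(t_1)$ may overshoot $u$, so you must take $\alpha_0<a(t_1)$ strictly; the admissible width $\rho\le 2C^{-1}(a(t_1)-\alpha_0)$ then shrinks to zero as $\alpha_0\nearrow a(t_1)$. Consequently $\xi'(t_1)=\alpha_0+B(r(t_1),t_1)<a(t_1)+B(r(t_1),t_1)=r'(t_1)$: the two fronts separate at \emph{first} order, and the inclusion $\xi\le r$ carries no second-order information about $r$. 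Your integrated inequality, divided by $s-t_1$ and sent $\alpha_0\nearrow a(t_1)$, recovers only $\liminf_{s\to t_1^+}\tfrac{1}{s-t_1}\int_{t_1}^s a\ge a(t_1)$, but this limit equals $a(t_1)$ anyway by Darcy's law (since $D_t^+r(t_1)$ exists); it does \emph{not} give $\liminf_{s\to t_1^+}a(s)\ge a(t_1)$ pointwise. Without that, the ``iteration and bootstrapping'' is circular, and the open--closed continuation on $E$ cannot be closed (closedness would require \emph{left} lower semicontinuity of $a$, which is unavailable).

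The paper's barrier is parabolic rather than linear: $\underline u=\lambda(t)\bigl(\alpha(t)^2-y(x,t)^2\bigr)_+$, with curvature $\underline u_{xx}(\cdot,0)=-2\lambda(0)=-\sigma_3$ matching the fundamental-estimate bound. This allows the slopes at the free boundary to be \emph{equal}, $D_x^-\underline u(0,0)=-k_0=D_x^-u(0,0)$, while still enforcing $\underline u(\cdot,0)\le u(\cdot,0)$ on a region of fixed width. Hence $\underline r'(0)=r'(0)$, and now $\underline r\le r$ bounds the second difference quotient from below: $g_h(t):=2h^{-2}\bigl(r(t+h)-r(t)-hD_t^+r(t)\bigr)\ge -\sigma k(t)+D_t^+B(r(t),t)+o(h)$. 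Passing to a distributional limit yields $k'+\sigma k\ge 0$, from which Gr\"onwall concludes. The parabolic shape is not cosmetic: it is precisely what lets one match first-order data and thereby access the differential inequality on $k=a$. (There is also a smaller issue with your subsolution check: since $\xi$ is displaced from the streamline by $\int_{t_1}^s\alpha\approx\alpha_0(s-t_1)$, one finds $\calL\psi|_{x=\xi(s)}=-\alpha\alpha_0 B_x(r(t_1),t_1)(s-t_1)+O((s-t_1)^2)$, which is positive when $B_x<0$; the quadratic coefficient in $\xi$ should be $\tfrac12\bigl[B_t+(B+\alpha_0)B_x\bigr]$ rather than $\tfrac12[B_t+BB_x]$.)
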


\begin{proof}

Denote
\[k(t):=-D_x^-u(r(t),t).\] 
Since $u$ is Lipschitz continuous after positive time, we have $k(t)\leq \sigma_0=\sigma_0(t_0)$ for all $t>t_0$.
It follows from Proposition \ref{T.2.3} that $u_{xx}\geq -\sigma_1$ in $\bbR\times [t_0,\infty)$ in the sense of distribution for some $\sigma_1=\sigma_1(t_0)$.
Denote $B(x,t)=-(V+W*\varrho)(x,t)$. Fix any $T>\min\{t_0,1\}$. By Lemma \ref{L.3.2} and the assumption that $V,W\in C_{x}^3C_t^2(\mathbb{R}\times [0,\infty))$, there exists $\sigma_2=\sigma_2(t_0,T)$ such that 
\begin{equation}
    \label{5.1}
    (1+\|B\|_{C_x^3}+\|B_{t}\|_{C_x^2}+\|B_{tt}\|_\infty)^3\leq \sigma_2.
\end{equation}

Take one free boundary point $x_1=r(t_1)$ with $t_1>t_0$.
For simplicity of notations, by performing a translation on $(x,t)$, we can assume $x_1=t_1=0$.

Define 
\begin{equation}\label{L}
    L:=\max\{ (m-1)(5\sigma_3+2\sigma_0), 4\sigma_3\}\quad \text{ and }\quad \lambda(t):=\frac{\sigma_3}{2}e^{-2Lt}
\end{equation}
where $\sigma_3:=\max\{\sigma_1,\sigma_2\}\geq 1$. Next we set $\alpha(t)$ to be the unique solution to
\begin{equation*}
\alpha'(t)=\frac{2 k_0}{\sigma_3}\lambda(t)(1-{L}t)\quad \text{ with }\alpha(0)=\alpha_0:=\frac{k_0}{\sigma_3}\text{ and }k_0:=k(0)>0.
\end{equation*} 

Consider the following barrier
\begin{equation}
    \label{5.5'}
    \underline{u}(x,t)=\lambda\l\alpha(t)^2-\l x+\alpha_0-B t+B\, B_x\frac{t^2}{2}+B_t\frac{t^2}{2}\r^2\r_+.
\end{equation}
It can be checked that,
\[\underline{u}(0,0)=u(0,0)=0,\quad D_x^-\underline{u}(0,0)=D_x^-u(0,0)=-2\lambda(0)\alpha_0=-k_0,\]
and for all $x<r(0)$,
\[ \underline{u}_{xx}(x,0)=-\sigma_3\leq u_{xx}.\]
Hence we have
\[\underline{u}(x,0)\leq u(x,0).\]

We {\bf claim} that $\underline{u}$ is a subsolution to \eqref{1.2} for $t\in [0,\tau_*]$ where 
$\tau_*=\min\{k_0,\tau\}$ for some $\tau>0$ depending only on $t_0,T$ and universal constants.

\medskip

The proof of the {\bf claim} will be given below. We first discuss the consequences.
With the claim, using $\underline{u}(x,0)\leq u(x,0)$ and comparison principle, we obtain $\underline{u}\leq u$ in $\bbR\times [0,\tau_*]$.
By the definition of $\underline{u}$, the right-hand side free boundary (denoted as $\underline{r}=\underline{r}(t)$) of $\underline{u}$ satisfies
\begin{equation}
    \label{5.5}
    \underline{r}(t)=\alpha(t)-\alpha_0+B(\underline{r},t)t-(B(\underline{r},t)B_x(\underline{r},t)+B_t(\underline{r},t))\frac{t^2}{2},
\end{equation}
which is obtained by solving for $\underline{u}(\underline{r}(t),t)=0$.
And so
\begin{equation}
    \label{5.7}
    \underline{r}'(0)=\alpha'(0)+B(0,0)=k_0+B(0,0)=D_t^+r(0).
\end{equation}
Because $\underline{u}\leq u$ in $\mathbb{R}\times [0,\tau_*]$, we know $\underline{r}\leq r$ for $t\in [0,\tau_*]$. 
Hence for $h\leq \tau_*$,
\begin{equation}\label{5.14}
    \underline{r}(h)-\underline{r}(0)-h\underline{r}'(0)\leq r(h)-r(0)-hD_t^+r(0).
\end{equation}

Direct computation yields
\[\alpha''(0)=2\alpha_0 \partial_t(\lambda(t)(1-Lt))|_{t=0}=-6\alpha_0L\lambda(0)=-3L k_0.\]
Recall that $r(0)=\underline{r}(0)=0$. By differentiating \eqref{5.5} twice, we get
\begin{align*}
 \underline{r}''(0)&=-3L k_0+2(B(\underline{r}(0),0))'-B(0,0)B_x(0,0)-B_t(0,0)
\end{align*}
where
\[(B(\underline{r}(0),0))':=\frac{d}{dt}B(\underline{r}(t),t)|_{t=0}.\]
Due to \eqref{5.7},
\[(B(\underline{r}(0),0))'=B_x(0,0)(k_0+B(0,0))+B_t(0,0)=D_t^+B(r(t),t)|_{t=0}.\]
Therefore
\begin{equation}
     \label{5.6}
\begin{aligned}
    \underline{r}''(0)&=-3L k_0+2D_t^+(B(r(t),t))|_{t=0}-B(0,0)B_x(0,0)-B_t(0,0)\\
    &=-3L k_0+D_t^+(B(\underline{r}(t),t))|_{t=0}+B_x(0,0)k_0\\
    &\geq -\sigma k_0+D_t^+(B(r(t),t))|_{t=0}\\
    &:=-\sigma k_0+D_t^+B(r(0),0),
\end{aligned}
\end{equation}
where $\sigma:=3L+\sigma_2$.

\medskip

Now let us go back to any general free boundary point $x=r(t)$ with $t\geq t_0$. According to \eqref{5.14} and \eqref{5.6}, we have for the function
\[g_h(t):=\frac{r(t+h)-r(t)-hD_t^+r(t)}{h^2/2},\]
the estimate
\begin{equation}\label{5.8}
g_h(t)\geq -\sigma k(t)+D_t^+B(r(t),t)+o(h),
\end{equation}
under the condition that $h\leq \min\{k(t),\tau\}$. 

While in the case when $k(t)=0$, by Lemma \ref{L.4.2}, \beq\lb{4.2}
D_t^+r(t)=k(t)+B(r(t),t)=B(r(t),t).
\eeq
Proposition \ref{P.1.2} yields, for all $h>0$, 
\begin{equation}\label{5.15}
r(t+h)>X(r(t),t;h) .   
\end{equation}
Thus
\begin{align*}
    g_h(t)&\geq \frac{2}{h^2}(X(r(t),t;h)-r(t)-hB(r(t),t))\\
   & =\frac{2}{h^2}\left(\int_t^{t+h} B(X(r(t),t;s),t+s)-B(r(t),t)ds\right).
\end{align*}
It follows from \eqref{4.1} that
\[|B(X(r(t),t;s),t+s)-B(r(t),t)|\leq \sigma_2s\leq \sigma_2h,\]
and thus $g_h(t)$ is bounded below by $-2\sigma_2$ when $k(t)=0$.


Due to Lemma \ref{L.4.2}, $r(t)$ is Lipschitz continuous. In view of Lemma \ref{L.3.1}, 
\begin{align*}
    \left|\int_{t_0}^T g_h(t)dt\right|=\left|\frac{2}{h^2}\int_{T}^{T+h}r(t)-r(T)dt-\frac{2}{h^2}\int_{t_0}^{t_0+h}r(t)-r(t_0)dt\right|\leq C(t_0)
\end{align*}
for some $C(t_0)>0$ independent of $h$. Therefore we can select a sequence of $h_n\to 0$ such that $g_{h_n}$ converges to a signed measure $\mu$. In view of the definition of $g_h$ and \eqref{4.2}, we get
\begin{equation}
    \label{5.16}
    \mu=(k(t)+B(r(t),t))'
\end{equation}
in the sense of distribution. 

Denote $E_0:=\{t\in [t_0,T]\,|\,k(t)=0\}$. In view of \eqref{4.2} and \eqref{5.15}, $k(t)$ is the right derivative of a strictly increasing function (for which right derivatives always exist), and so $E_0$ is measurable of $0$ measure. Notice that $g_h(t)\geq -2\sigma_2$ on $E_0$. Hence boundedness of $(B(r(t),t))'$ implies that $\mu \chi_{E_0}\geq (B(r(t),t))'$ in the sense of distribution. 
Next passing $h\to 0$ in \eqref{5.8} shows that
\[
    \mu+\sigma k(t)-(B(r(t),t))'
\]
is a non-negative measure on $[t_0,T]\backslash E_0$. Overall these yield that
\[\mu\geq -\sigma k(t)+(B(r(t),t))' \quad\text{ in $[t_0,T]$, in the sense of distribution.}\]

Combining this with \eqref{5.16}, we obtain
\begin{equation}\lb{5.166}
k'(t)+\sigma k(t)\geq 0
\end{equation}
in the sense of distribution for $t\in [t_0,T]$. Thus we obtain
\[
k(t)\geq e^{-\sigma (t-t_0)}k(t_0)
\]
which implies that $k(t)=-D_x^- u(r(t),t)>0$ for $t\in [t_0,T]$. Here the Gr\"{o}nwall type inequality for distributions can be justified as follows. Since $k$ is bounded and $k(t_0)>0$, \eqref{5.166} implies that $k(t)$ is positive for a short positive time after $t_0$. Then we can approximate $k$ by $k_\tau:=k*\phi_\tau$ where $\{\phi_\tau\}_{\tau>0}$ are smooth non-negative mollifiers satisfying $\phi_\tau \to \delta_0 \text{ as }\tau\to 0$. Then for all $\tau>0$ small enough, $k_\tau(t_0+\tau)\geq c>0$ with $c$ independent of $\tau$, and so the conclusion follows since $k_\tau$ satisfies \eqref{5.166} in the classical sense.

\medskip

Now we proceed to prove the claim.

\smallskip

\textbf{Proof of the claim}. 
To prove that $\underline{u}$ is a subsolution to \eqref{1.2}, by Lemma \ref{L.2.4}, it suffices to prove $\calL(\underline{u})\geq 0$
in the positive set of $\underline{u}$, where the operator $\calL$ is given in \eqref{calL}.

For abbreviation of notations, denote $\alpha=\alpha(t)$,
\begin{equation}
    \label{5.11}
    y=y(x,t):=x+\alpha_0-Bt+(BB_{x}+B_t)\frac{t^2}{2},
\end{equation}
and then $\underline{u}=\lambda(\alpha^2-y^2)_+$.
We have
\begin{equation}
    \label{5.13}
\begin{aligned}
    &y_x=1-B_xt+(B_{xx}B+B_{x}^2+B_{xt})\frac{t^2}{2},\\
    &y_{xx}=-B_{xx}t+(B_{xxx}B+3B_{xx}B_x+B_{xxt})\frac{t^2}{2},\\
    &y_t=-B+BB_x t+(B_tB_x+BB_{xt}+B_{tt})\frac{t^2}{2}.
\end{aligned}
\end{equation}

Plugging $\underline{u}$ into the operator $\calL$, we find in the positive set of $\underline{u}$ (i.e. $|y|< \alpha$) that,
\begin{align*}
\calL(\underline{u})
&=
    \lambda'(\alpha^2-{y}^2)+2\lambda (\alpha\alpha'-{y}y_t)+2(m-1)\lambda^2(y_{x}^2+{y}\,y_{xx})(\alpha^2-{y}^2)\\
    &\quad -4\lambda^2{y}^2y_{x}^2-2\lambda {y}\,y_x B+(m-1)\lambda(\alpha^2-y^2)B_x\\
&=\l\lambda'+2(m-1)\lambda^2(y_{x}^2+y\,y_{xx})+(m-1)\lambda B_x\r(\alpha^2-y^2)\\
&\quad +2\lambda\alpha \alpha' -2\lambda y\,y_t-4\lambda^2y^2y_{x}^2-2\lambda y\,y_xB.
\end{align*}
To have $\calL(\underline{u})\leq 0$, we only need to verify the following two inequalities
\begin{equation}\label{5.2}
 J_1:=   \frac{\lambda'}{2\lambda}+2(m-1)\lambda(y_{x}^2+y\,y_{xx})+(m-1) B_x\leq 0,
\end{equation}
and
\begin{equation}\label{5.3}
J_2:=\frac{\lambda'}{2}(\alpha^2-y^2)+2\lambda\alpha \alpha' 
-2\lambda y\,y_t-4\lambda^2 y^2\,y_{x}^2-2\lambda y\,y_x B\leq 0.
\end{equation}

By \eqref{5.11} and the regularity of $B$, there is $\tau_1=\tau_1(\sigma_2)$ such that for all $t\in (0,\tau_1)$,
\[|y_x-1|+|y_{xx}|<1.\]
Next recall the definitions of $\lambda(t)$ and $\alpha(t)$, and then we get 
\[\lambda(t)\leq \frac{\sigma_3}{2},\quad \alpha'(t)\leq k_0\leq \sigma_0\quad\text{ and }\quad\frac{\lambda'(t)}{2\lambda(t)}=-L.\]
Hence for $t\leq \frac{1}{\sigma_3}$,
\[\alpha(t)\leq \alpha_0+ k_0 t =\frac{2k_0}{\sigma_3}\leq \frac{2\sigma_0}{\sigma_3}.\]
Also in the support of $\underline{u}$, we have
$ |y(x,t)|<\alpha(t)\leq \frac{2\sigma_0}{\sigma_3}$ for $t\leq \frac{1}{\sigma_3}$.
Plugging these estimates, as well as \eqref{5.13}, into the left-hand side of \eqref{5.2} yields for $t\leq \frac{1}{\sigma_3}$,
\begin{align*}
J_1&= -L+(m-1)\sigma_3(4+\frac{2\sigma_0}{\sigma_3})+(m-1)\sigma_2\\
&\leq -L+(m-1)(5\sigma_3+2\sigma_0)\leq 0.
\end{align*}

Next we prove  \eqref{5.3}. It follows from \eqref{5.1}, for some universal $c\in(0,1)$ and all $0<t\leq \frac{c}{\sigma_3}$,
\[\left|-B_xt+(B_{xx}B+B_x^2+B_{xt})\frac{t^2}{2}\right|\leq \sigma_3t.\]
Pick $\tau_2:=\min\{\frac{c}{\sigma_3},\, \frac{1}{2L},\tau_1\}$, and we have for $t\leq \tau_2$
\begin{align*}
    2\alpha \alpha' \lambda-4\lambda^2{y}^2\,y_{x}^2& =4\lambda^2(\alpha\alpha_0(1-{L}t)-{y}^2(1-B_xt+(B_{xx}B+B_{x}^2+B_{xt})\frac{t^2}{2})^2)\\
    & \leq 4\lambda^2(\alpha^2(1-{{L}}t)-{y}^2(1-\sigma_3t)^2)\\
    &\leq 4\lambda^2(\alpha^2(1-{2\sigma_3}t)-2\sigma_3\alpha^2t-{y}^2(1-{2\sigma_3}t))\\
    &= 4\lambda^2(\alpha^2-{y}^2)(1-{2\sigma_3}t)-8\sigma_3\lambda^2\alpha^2t
\end{align*}
In the first inequality we used $\alpha\geq \alpha_0$, while in the second inequality we used $L\geq 4\sigma_3$.

By \eqref{5.1} and \eqref{5.13}, we have for $|y|\leq \alpha$,
\begin{align*}
    -2\lambda y\,y_t-2\lambda y\,y_x B&=-2\lambda y(B_{xx}B^2+B_{x}^2B+B_{t}B_{x}+2BB_{xt}+B_{tt})\frac{t^2}{2}\\
    &\leq {\sigma_2}\lambda |y| t^2\leq {\sigma_3}\lambda \alpha t^2.
\end{align*}
It follows that
\begin{equation}
    \label{5.12}
J_2    \leq (\alpha^2-y^2)(\frac{\lambda'}{2}+4\lambda^2(1-2\sigma_3t))-8\sigma_3\lambda^2\alpha^2t+\sigma_3\lambda\alpha t^2.
\end{equation}
Note that $\lambda\leq \frac{\sigma_3}{2}$ and $\frac{\lambda'}{\lambda}=-2L\leq -8\sigma_3$. Hence
\[(\alpha^2-y^2)(\frac{\lambda'}{2}+4\lambda^2(1-2\sigma_3t))\leq (\alpha^2-y^2)\lambda(-L+4\lambda)\leq 0.\]
Moreover when $t\leq \frac{1}{2L}$, by definition, $\lambda(t)\geq \frac{\sigma_3}{2e}$. Also since $\alpha(t)\geq \alpha_0=\frac{k_0}{\sigma_3}$, we get 
\[\lambda(t)\alpha(t)\geq  \frac{k_0}{2e}\quad\text{ for }t\leq \tau_2.\]
So
\[-8\sigma_3\lambda^2\alpha^2t+\sigma_3\lambda\alpha t^2=\sigma_3\lambda\alpha t(-\frac{4k_0}{e}+t)\leq 0\]
holds for all
$t\leq \min\{k_0,\tau_2\}$. Combining these with \eqref{5.12} implies that $J_2\leq 0$.

Finally we proved
\[\calL(\underline{u})\geq 0 \quad \text{ for all }
0<t\leq \min\{k_0,\tau_2\},\]
and here, $\tau_2$ only depends on $t_0,T$ and universal constants. 

\end{proof}

The following corollary uses the condition \eqref{1.6} which is weaker than the initial non-degeneracy assumption (it is weaker because the non-degeneracy \eqref{nondeg} at time $0$ corresponds to $\gamma\leq 1$ in \eqref{1.6}).

\begin{corollary}\label{C.5.2}
Let $\varrho$ be the solution to \eqref{main} with bounded, non-negative, compactly supported initial data $\varrho_0$, and $V,W\in C_{x}^3C_t^2(\mathbb{R}\times [0,\infty))$. Let $u$ be its pressure variable and suppose \eqref{1.6} holds. Then $-D_x^- u(r(t),t)>0$ for all $t>0$.
\end{corollary}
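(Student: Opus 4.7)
My strategy is to combine Theorem~\ref{T.5.1}, which propagates positivity of the boundary slope forward in time, with Proposition~\ref{P.1.2}, which under \eqref{1.6} forces strict expansion of $\mathrm{supp}\,u(\cdot,t)$ relative to streamlines. Set $k(t):=-D_x^-u(r(t),t)$, which is well-defined for every $t>0$ by Lemma~\ref{L.4.2} and non-negative since $u\geq 0$ vanishes at $r(t)$. By Theorem~\ref{T.5.1} the set $S:=\{t>0:k(t)>0\}$ is forward-closed in the sense that $t_0\in S$ implies $[t_0,\infty)\subset S$, so it suffices to prove $\inf S=0$; then for any prescribed $t>0$ one picks $t_0\in S$ with $t_0<t$ and invokes Theorem~\ref{T.5.1} to conclude $k(t)>0$.

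Suppose for contradiction that $k\equiv 0$ on some interval $(0,\tau)$ with $\tau>0$. Darcy's law (Lemma~\ref{L.4.2}) then gives $D_t^+r(t)=k(t)+B(r(t),t)=B(r(t),t)$ for all $t\in(0,\tau)$. Because $r$ is Lipschitz on $[0,\tau]$ by Lemma~\ref{L.4.2}, it is absolutely continuous, and at every point of classical differentiability one has $r'(t)=D_t^+r(t)=B(r(t),t)$; integrating yields
\[
r(t)=r(0)+\int_0^t B(r(s),s)\,ds,\qquad t\in[0,\tau].
\]
Since $B\in C_x^1C_t^0$ (Lemma~\ref{L.3.2}), Picard--Lindel\"of uniqueness then forces $r(t)=X(r(0),0;t)$ on $[0,\tau]$: the right interface traces out exactly the streamline emanating from $r(0)$.

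This is precisely what the growth hypothesis \eqref{1.6} rules out. Proposition~\ref{P.1.2} asserts that under \eqref{1.6}, alternative (ii) holds at every boundary point; applying it at any $t_1\in(0,\tau)$ with $x_1:=r(t_1)$ gives $r(t)<X(x_1,t_1;t-t_1)$ for all $t\in[0,t_1)$. But the two-parameter flow property of the streamline ODE, together with the identity $x_1=X(r(0),0;t_1)$ from the previous paragraph, yields $X(x_1,t_1;t-t_1)=X(r(0),0;t)=r(t)$, contradicting the strict inequality. Hence $\inf S=0$, and the corollary follows.

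The step I expect to require the most care is the passage from the right-derivative identity $D_t^+r=B(r,\cdot)$ on $(0,\tau)$ to the integrated ODE $r(t)=r(0)+\int_0^t B(r,\cdot)\,ds$ and its unique resolution. It hinges on (i) Lipschitz implies absolutely continuous, (ii) the a.e.\ classical derivative of such a function coincides with the right derivative where the latter exists, and (iii) Picard--Lindel\"of uniqueness, which applies since $B$ is $C^1$ in $x$ and continuous in $t$. All are standard facts but worth stating explicitly in the write-up.
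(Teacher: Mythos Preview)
Your proof is correct and follows essentially the same approach as the paper: both arguments use Proposition~\ref{P.1.2}(ii) to rule out $k\equiv 0$ on an interval (since then Darcy's law would force $r$ to coincide with a streamline), and then invoke Theorem~\ref{T.5.1} to propagate positivity forward. One minor point: Lemma~\ref{L.4.2} together with Lemma~\ref{L.3.1} only gives a Lipschitz bound on $r$ for $t$ bounded away from $0$, so your integration down to $t=0$ is not quite justified---the paper sidesteps this by working on $(t_0/2,t_0)$ for a given $t_0>0$, and your argument is fixed the same way by carrying it out on any $[\epsilon,\tau)\subset(0,\tau)$ instead.
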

\begin{proof}
Fix any $t_0>0$.
Recall \eqref{5.15} by Proposition \ref{P.1.2}. We get $r(t_0)>X(r(\frac{t_0}{2}),\frac{t_0}{2};\frac{t_0}{2})$ and so there exists $t_1\in (\frac{t_0}2,t_0)$ such that
\[D_t^+ r(t_1)>B(r(t_1),t_1)\]
and thus $-D_x^- u(r(t_1),t_1)>0$. By Theorem \ref{T.5.1}, for all $t>t_1$, we have $-D_x^- u(r(t),t)>0$.
\end{proof}

\medskip

\section{Higher Regularity}\label{s.5}

With the knowledge of the fundamental estimate and non-degeneracy, the $C^{1,\alpha}$ regularity of the free boundary follows from Theorem 6.1 \cite{kim2018fb}.

\begin{theorem}\label{T.6.1}
Assume the conditions of Corollary \ref{C.5.2}. Then $r(t)$ is a $C^{1,\alpha}$ function for all $t>0$.
\end{theorem}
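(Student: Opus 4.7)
The plan is to reduce the claim to \cite[Theorem 6.1]{kim2018fb}, which establishes $C^{1,\alpha}$ regularity of the free boundary for the local-drift equation \eqref{1.1} under the hypotheses of (a) the fundamental estimate on the pressure, (b) non-degeneracy of the free boundary at every positive time, and (c) sufficient regularity of the driving vector field. The strategy is to view the solution $\varrho$ of the nonlocal equation \eqref{main} as a solution of the local-drift equation \eqref{1.1} with the vector field $B = -(V + W*\varrho)$ treated as given, and then to check that (a)--(c) are satisfied so that the cited theorem applies directly.

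First, I would record the regularity of $B$: by Lemma~\ref{L.3.2}, $B$ is bounded in $C_x^k C_t^{1,1}$ on $\bbR\times[\tau,T]$ for each $k\geq 0$ and each $T>\tau>0$, so in particular $B$ enjoys the spatial smoothness and temporal Lipschitz regularity required by the local-drift framework in \cite{kim2018fb}. Next, the fundamental estimate $u_{xx}(\cdot,t)\geq -1/t - C$ on $\bbR$, in the sense of distributions, is precisely Proposition~\ref{T.2.3}; combined with Lemma~\ref{L.3.1} this furnishes the Lipschitz control on $u_x$ and $u_t$ away from $t=0$ that underlies the half-relaxed limits argument in the proof of \cite[Theorem 6.1]{kim2018fb}.

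For the crucial non-degeneracy hypothesis, I would invoke Corollary~\ref{C.5.2}: under the growth condition \eqref{1.6}, one has $-D_x^- u(r(t),t) > 0$ for all $t>0$. In conjunction with Lemma~\ref{L.4.2} (existence of $D_x^- u(r(t),t)$ and $D_t^+ r(t)$, together with Darcy's law $D_t^+ r(t) = -D_x^- u(r(t),t) + B(r(t),t)$), this says exactly that the free boundary moves with strictly positive speed relative to the streamlines of $B$ at every positive time, and that the pressure has a nondegenerate spatial derivative at the interface. These are the non-degeneracy conditions \eqref{cond: fund esti} and \eqref{nondeg} required at every $t>0$.

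Finally, with (a)--(c) in hand, the conclusion of \cite[Theorem 6.1]{kim2018fb} applies on any compact time interval $[\tau,T]\subset (0,\infty)$ and yields, for some $\alpha\in(0,1)$, that $r(\cdot)\in C^{1,\alpha}([\tau,T])$; since $\tau,T$ are arbitrary, $r(\cdot)$ is $C^{1,\alpha}$ on $(0,\infty)$. The only potentially delicate step in this reduction is verifying that the quantitative regularity of $B$ provided by Lemma~\ref{L.3.2} meets, locally uniformly in $t$, the precise hypotheses on the vector field in the cited theorem; this is where one uses the $C_t^{1,1}$ regularity rather than mere Lipschitz bounds on $B_t$, but the dependence on $1+1/t$ in \eqref{l.3.2} is harmless after restricting to $t\in[\tau,T]$ with $\tau>0$.
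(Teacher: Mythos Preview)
Your proposal is correct and follows exactly the paper's route: both proofs reduce to \cite[Theorem~6.1]{kim2018fb} by treating $B=-(V+W*\varrho)$ as a known vector field with the regularity from Lemma~\ref{L.3.2}, and invoking the fundamental estimate (Proposition~\ref{T.2.3}) together with the non-degeneracy (Corollary~\ref{C.5.2}). The one step the paper makes explicit that you leave implicit is the verification of the cone-monotonicity hypothesis needed in \cite{kim2018fb}: using $u_{xx}\geq -C$ and $-D_x^-u(r(t),t)>0$, integration in $x$ gives $-u_x(x,t)>0$ throughout a left neighborhood of $r(t)$, so $u$ is locally uniformly monotone near the free boundary.
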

\begin{proof}
It follows from Corollary \ref{C.5.2} that the free boundary is non-degenerate for all $t>0$. Since $u_{xx}$ is bounded from below and $u$ is $C^2$ in $\{u>0\}$, 
\begin{equation}
    \label{B.2}
    -u_x(x,t)=-D_x^-u(r(t),t)+\int_{x}^{r(t)}u_{xx}(y,t)dy\geq -D_x^-u(r(t),t)-C(r(t)-x)>0,
\end{equation}
if $r(t)-x$ is sufficiently small.
Hence $u$ is locally uniformly monotone decreasing in the positive $x$ direction near the free boundary $(r(t),t)$. Next, as before, we treat $B(x,t)=-(V+W*\varrho)(x,t)$ as a function of $x,t$, which satisfies \eqref{5.1}. Thus all the conditions of Theorem 6.1  \cite{kim2018fb} are satisfied and the conclusion follows.
\end{proof}

Thanks to Theorem \ref{T.6.1}, we are able to write $r'(t)$ instead of $D_t^+ r(t)$, and for simplicity (with a slight abuse of notation) we will also write
\beq\lb{5.20}
u_x(r(t),t):=D_x^- u(r(t),t).
\eeq

\begin{corollary}\label{C.6.1}
Under the assumption of Theorem \ref{T.6.1}, the functions $u_x,u_t,u\, u_{xx}$ are continuous in $\bigcup_{t>0}\left(\overline{\{u(\cdot,t)>0\}}\times\{t\}\right)$. And 
\[u_t=u_{x}^2-u_xB-(m-1)uB_x,\,\quad u\, u_{xx}=0\quad\text{ on the free boundary.}\]
\end{corollary}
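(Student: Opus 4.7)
In $\{u>0\}$ the operator in \eqref{1.2} is uniformly parabolic and $u\in C^\infty_{\loc}$, so $u_x$, $u_t$ and $u\,u_{xx}$ are automatically continuous there; the entire content of the corollary is the continuous extension across the free boundary curve $\Gamma=\{(r(t),t):t>0\}$ and identification of the traces. My plan is first to pin down $u_x$ on $\Gamma$ via Darcy's law, second to bootstrap to continuity of $u_t$ using the identity $u\equiv 0$ on $\Gamma$, and third to read off the vanishing of $u\,u_{xx}$ directly from the pressure equation.

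For $u_x$, the trace $u_x(r(t),t):=D_x^- u(r(t),t)$ exists for every $t>0$ by Lemma~\ref{L.4.2}, and Darcy's law $r'(t)=-u_x(r(t),t)+B(r(t),t)$ combined with $r\in C^{1,\alpha}$ (Theorem~\ref{T.6.1}) and the regularity of $B$ from Lemma~\ref{L.3.2} makes $t\mapsto u_x(r(t),t)$ continuous. For joint continuity at a boundary point $(r(t_0),t_0)$, the fundamental estimate $u_{xx}(\cdot,t)\geq -C$ tells us that $u_x(\cdot,t)+Cx$ is nondecreasing in $x$, giving the one-sided bound $u_x(x,t)\leq u_x(r(t),t)+C(r(t)-x)$ and hence $\limsup u_x(x_n,t_n)\leq u_x(r(t_0),t_0)$ along any interior sequence approaching $(r(t_0),t_0)$. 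For the matching $\liminf$, I would translate the subsolution $\underline{u}$ built in the proof of Theorem~\ref{T.5.1} so that it touches $u$ from below at the nearby boundary points $(r(t_n),t_n)$ with matching free boundary slope, and then use the explicit smooth $\underline{u}_x$ in a neighbourhood; combined with the uniform positivity of $-u_x(r(t),t)$ supplied by Corollary~\ref{C.5.2}, this forces $\liminf u_x(x_n,t_n)\geq u_x(r(t_0),t_0)$.

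For $u_t$, differentiating $u(r(t),t)\equiv 0$ along the $C^{1,\alpha}$ curve gives $u_t+r'u_x=0$ on $\Gamma$; substituting Darcy's law yields the claimed boundary value $u_t=u_x^2-u_xB$ on $\Gamma$. For joint continuity up to $\Gamma$, I would flatten the free boundary via $y=r(t)-x$, $\tau=t$, in which $\tilde u(y,\tau):=u(r(\tau)-y,\tau)$ solves a degenerate parabolic equation on the fixed domain $\{y>0\}$ with Dirichlet datum $\tilde u(0,\tau)=0$ and strictly positive boundary gradient. The previous step produces continuity of $\tilde u_y$ up to $\{y=0\}$, and the classical one-phase boundary regularity for this kind of problem (as in \cite{LV12}) promotes this to continuity of $\tilde u_\tau$ up to $\{y=0\}$; pulling back via $u_t=\tilde u_\tau+r'(t)\tilde u_y$ then yields continuity of $u_t$ up to $\Gamma$. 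Finally, rewriting \eqref{1.2} as
\[(m-1)\,u\,u_{xx}=u_t-u_x^2+u_xB+(m-1)\,u\,B_x\]
inside $\{u>0\}$ and passing to the limit at a boundary point using the previous parts and continuity of $B,B_x$, the right hand side converges to $(u_x^2-u_xB)-u_x^2+u_xB+0=0$; thus $u\,u_{xx}$ extends continuously across $\Gamma$ with value $0$, while the limit relation simultaneously corroborates the boundary identity for $u_t$.

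The principal obstacle is the lower bound in the $u_x$ step: the fundamental estimate is inherently one-sided, so matching the boundary value from the interior forces us to invoke the constructive sub-barrier from Theorem~\ref{T.5.1}, and the non-degeneracy from Corollary~\ref{C.5.2} is indispensable there to prevent the barrier from collapsing.
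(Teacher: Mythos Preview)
Your overall architecture---trace $u_x$ on $\Gamma$ via Darcy's law and $r\in C^{1,\alpha}$, then $u_t$, then read off $u\,u_{xx}=0$ from \eqref{1.2}---matches the paper, and your $\limsup$ bound for $u_x$ via the fundamental estimate is exactly right.

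There is, however, a genuine gap in your $\liminf$ step for $u_x$. An ordering of functions does not give an ordering of derivatives at interior points: even if you arrange $\underline{u}\leq u$ with equality and matching slope at $(r(t_n),t_n)$, this says nothing about $u_x(x_n,t_n)$ for $x_n<r(t_n)$. The paper's argument here is simpler and uses only the fundamental estimate again. Suppose $(x_n,t_n)\to(x_0,t_0)$ inside $\Omega$ with $u_x(x_n,t_n)\leq -k_0-\delta$. Since $u_{xx}\geq -C$, for any fixed $x<x_0$ and all large $n$ one has $u_x(x,t_n)\leq u_x(x_n,t_n)+C(x_n-x)\leq -k_0-\delta+C(x_n-x)$; letting $n\to\infty$ (using interior smoothness of $u$ at $(x,t_0)$) gives $u_x(x,t_0)\leq -k_0-\delta+C(x_0-x)$, contradicting $D_x^-u(x_0,t_0)=-k_0$. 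So the one-sided bound on $u_{xx}$ handles \emph{both} directions for $u_x$; no barrier is needed, and non-degeneracy plays no role in this particular step.

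Your route to joint continuity of $u_t$ is also different from the paper's and not fully justified. Flattening and invoking \cite{LV12} is close to circular: the machinery there presupposes up-to-boundary gradient control of the type you are establishing, and the claim ``continuity of $\tilde u_y$ promotes to continuity of $\tilde u_\tau$'' has no direct counterpart in that reference. The paper instead follows \cite{caffarelli1979regularity}: interior Schauder estimates, scaled by the distance to $\Gamma$ together with the non-degeneracy, yield $|u\,u_{tt}|\leq C$ in $N_\eta(t_0)$. The lower bound $\liminf u_t\geq k_0^2+k_0B(x_0,t_0)$ then comes straight from \eqref{1.2} and $u_{xx}\geq -C$, while the matching upper bound is obtained by a Taylor-expansion-in-$t$ contradiction argument using $|u\,u_{tt}|\leq C$ and the $C^1$ regularity of $r$.
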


The proof is similar to the one of Theorem 2.3 in \cite{caffarelli1979regularity}. We will sketch it in the appendix. 

\medskip

Now we proceed to show high regularities of both the solution and the free boundary. The proof follows the line of the argument in \cite{LV12} where the $(PME)$ is studied. Let us remark that there is an alternative approach to high regularities which is given in \cite{LV30}. Both of the approaches are based on the non-degeneracy property, and the parallel statements in Theorem \ref{T.6.1} and Corollary \ref{C.6.1}. 

\medskip

As discussed in the introduction, firstly we show that $u_{xx}$ is bounded from above near the free boundary. For $t_0>0$, let $x_0=r(t_0)$ be the right-hand side free boundary point of $u$. Recall that $X(t)=X(x_0,t_0;t)$ is the streamline starting at $(x_0,t_0)$. We get $v(x,t):=u(x+X(t),t_0+t)$ is a solution to
\begin{equation}
    \label{6.8}
    v_t=(m-1)v v_{xx}+|v_x|^2-v_x \tilde{B}-v \tilde{B}_x,
\end{equation} 
where 
\[\tilde{B}(x,t):=B(x+X(t),t_0+t)-B(X(t),t_0+t).\]
We have $\tilde{B}(0,0)=0$ and from Lemma \ref{L.3.2}, for $t\in [0,T]$ and any $k\geq 0$,
\begin{equation}
    \label{6.10}
    \|\tilde{B}(\cdot,t)\|_{C_x^k(\bbR)}+
\|\tilde{B}_{t}(\cdot,t)\|_{C_x^k(\bbR)}+\|\tilde{B}_{tt}(\cdot,t)\|_{C_x^k(\bbR)}\leq C(k,t_0,T).
\end{equation} 

Write $\zeta(t)$ as the right-hand side free boundary of $v(\cdot,t)$, and then Lemma \ref{L.4.2} implies that
\begin{equation}\label{6.3}
    \text{$\zeta(t)$ is Lipchitz continuous with $\zeta(0)=0$.}
\end{equation}
Also non-degeneracy of $u$ translates to $-v_x(0,0)>0$.


\begin{lemma}\label{L.6.2}
In the above setting, there exist $C,\eta>0$ such that $v_{xx}\leq C$ in $R_{\eta}$, where
\[R_{\eta}:=\{(x,t)\in\bbR^2\,|\, \zeta(t)-\eta< y<\zeta(t),\, |t|< \eta\}.\]
\end{lemma}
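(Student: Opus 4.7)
\medskip
\noindent\textbf{Proof plan.} The plan is to obtain the pointwise upper bound on $v_{xx}$ by constructing an explicit supersolution barrier for $v$ near the free boundary (analogous to the subsolution \eqref{5.5'} used in the proof of Theorem~\ref{T.5.1}, but with its parameters tuned for the opposite inequality), and then combining the resulting quadratic upper bound on $v$ with the PDE~\eqref{6.8} and Corollary~\ref{C.6.1}.

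\medskip
First I would collect the quantitative information already available. By Theorem~\ref{T.5.1} and Corollary~\ref{C.5.2}, the slope $k(t):=-v_x(\zeta(t),t)$ satisfies $k_0\le k(t)\le K_0$ on a neighborhood $|t|\le \eta_0$ with $k_0>0$. Lemma~\ref{L.4.2} together with Theorem~\ref{T.6.1} gives $\zeta\in C^{1,\alpha}$ with $\zeta'(t)=k(t)+\tilde B(\zeta(t),t)$. The fundamental estimate (Proposition~\ref{T.2.3}) says $v_{xx}\ge -C$ distributionally; integrating this twice from $\zeta(t)$ yields the non-degenerate linear lower bound $v(x,t)\ge c_0(\zeta(t)-x)$ in a one-sided neighborhood of $\zeta(t)$. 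Since $\tilde B(0,0)=0$ and \eqref{6.10} controls $\tilde B$ in $C^3_x C^{1,1}_t$, by shrinking $\eta$ we may also assume that $\tilde B$ and its relevant derivatives are as small as we like on $R_\eta$.

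\medskip
Next I would build the barrier. Consider $\bar v$ of the Barenblatt-adapted form
\[ \bar v(x,t)=\Lambda\bigl(A(t)^2-Y(x,t)^2\bigr)_+, \]
where $Y$ is the shifted coordinate following the streamline to second order in $t$, exactly as in \eqref{5.11}, and $(\Lambda,A(t))$ are chosen so that: (i) $\calL(\bar v)\le 0$ in $\{\bar v>0\}$ (so that $\bar v$ is a supersolution of \eqref{6.8}, with the operator $\calL$ from \eqref{calL}); and (ii) $\bar v\ge v$ on the parabolic boundary of a rectangle $R_\eta$. The verification of (i) is the same algebra as \eqref{5.2}--\eqref{5.3} with the inequalities reversed, and leans on the $C^3_x C^{1,1}_t$ bound \eqref{6.10} for $\tilde B$ together with the upper bound $k(t)\le K_0$; the verification of (ii) uses the Lipschitz bounds on $v$ and $\zeta$. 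Theorem~\ref{thm:comp} (via Lemma~\ref{L.2.4}) then gives $v\le \bar v$ in $R_\eta$, which, after matching Taylor expansions of the two functions at their respective free boundaries, produces the quadratic upper bound
\[ v(x,t)\le k(t)\bigl(\zeta(t)-x\bigr)+\tfrac{K}{2}\bigl(\zeta(t)-x\bigr)^2\qquad\text{in }R_\eta.\]

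\medskip
Finally I would pass from this geometric bound to the pointwise bound on $v_{xx}$ by rewriting \eqref{6.8} as
\[ (m-1)\,v\,v_{xx}=\Phi+v\,\tilde B_x,\qquad \Phi:=v_t-v_x^2+v_x\tilde B.\]
By Corollary~\ref{C.6.1}, $\Phi\equiv 0$ on $\{v=0\}$. Combined with the matched linear-plus-quadratic control on $v$ from above and below, the equation forces $\Phi(x,t)\le Cv(x,t)$ in $R_\eta$; dividing by $(m-1)v$ and using that $\tilde B_x$ is bounded yields $v_{xx}\le C$ in $R_\eta\cap\{v>0\}$, as required.

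\medskip
The main obstacle is the second step. Asking $\bar v\ge v$ naively pushes $\bar v_{xx}$ upward, but a large $\bar v_{xx}$ makes the $-(m-1)\bar v\,\bar v_{xx}$ term in \eqref{calL} very negative and destroys the supersolution property. As in \eqref{5.5'}, this is reconciled by allowing $\bar v$ to have a slightly offset free boundary $\bar\zeta(t)>\zeta(t)$ whose displacement is encoded by the second-order-in-time streamline correction appearing in $Y$; this provides the necessary room at the lateral boundary of $R_\eta$ without forcing excessive curvature on $\bar v$, and it is precisely where the $C^{1,1}_t$ control of $\tilde B$ from Lemma~\ref{L.3.2}/\eqref{6.10} enters in an essential way.
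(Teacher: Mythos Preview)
Your final step is circular. Rearranging \eqref{6.8} gives
\[
\Phi := v_t - v_x^2 + v_x\tilde B \;=\; (m-1)\,v\,v_{xx} - v\,\tilde B_x,
\]
so the inequality $\Phi \le Cv$ is \emph{equivalent} to $(m-1)v_{xx} \le C + \tilde B_x$, which is precisely the bound you are trying to establish. The two-sided linear-plus-quadratic control on $v$ coming from your barrier $\bar v$ and the fundamental estimate is a $C^0$ statement and cannot by itself produce a pointwise bound on $v_{xx}$: a function with $|f(x)| \le C(\zeta-x)^2$ and $f(\zeta)=f'(\zeta)=0$ can still have $f''$ unbounded near $\zeta$ (think of $(\zeta-x)^2\sin\bigl((\zeta-x)^{-1}\bigr)$). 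Corollary~\ref{C.6.1} gives only continuity of $v_t$ and $v_x$ up to the boundary, not a Lipschitz rate, so there is no independent way to obtain $\Phi(x,t)=O(\zeta(t)-x)$ without already controlling $v_{xx}$.

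The paper avoids this by running the maximum principle on $p := v_{xx}$ directly rather than on $v$. One writes down the equation $\calL_2(p)=0$ that $p$ satisfies in $\{v>0\}$ and compares $p$ with a barrier of the form
\[
\phi(x,t)=\frac{\alpha}{\zeta(t)-x}+\frac{\beta}{\zeta_*(t)-x},
\]
where $\zeta_*(t)>\zeta(t)$ is an explicit affine function lying slightly ahead of the free boundary and $0<\alpha<\beta$ are fixed. Non-degeneracy ($v\sim k_0(\zeta-x)$) is what makes $\calL_2(\phi)\ge0$ in $R_{2\eta}$ for $\eta$ small. On the bottom and left pieces of the parabolic boundary $\phi$ is simply large; near the free boundary one uses the key input from Corollary~\ref{C.6.1} that $v\,v_{xx}\to0$, which yields $v_{xx}\le \alpha/(\zeta-x)\le\phi$ in an $\alpha$-dependent strip. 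Comparison then gives $p\le\phi$ throughout $R_{2\eta}$, and sending $\alpha\to0$ leaves $v_{xx}\le \beta/(\zeta_*-x)\le \beta/(\eps\eta)$ in $R_\eta$. The singular term $\alpha/(\zeta-x)$ is present precisely to absorb the a~priori unbounded behaviour of $v_{xx}$ at the interface and then to be discarded; this device is the missing idea in your plan.
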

\begin{proof}

In the set $\{v>0\}$, $p:=v_{xx}$ satisfies
\begin{align*}
    \calL_2(p)&:= p_t-(m-1)v p_{xx}-2m v_x p-(m+1)p^2+ p_x{\tilde{B}}+(m+1) p{\tilde{B}_x}\\
    &\quad +(2m-1) v_x\tilde{B}_{xx}-(m-1)v \tilde{B}_{xxx}=0.
\end{align*}

Denote $k_0:=-v_x(0,0)=-u_x(x_0,t_0)>0$ and then set
\begin{equation}\label{eps}
    \eps:=\min\left\{\frac{1}{20},\,\frac{k_0}{4(4m+1)}\right\}.
\end{equation}  
By Corollary \ref{C.6.1}, $v_x$ is continuous up to the free boundary. Also we have $\tilde{B}(0,0)=0$, and $\tilde{B}$ and $\zeta$ are Lipschitz continuous. So there exists $\sigma_0=\sigma_0(k_0,t_0,v)$ such that for all $\eta\in (0,\sigma_0)$, we have
\begin{equation}\label{6.4}
    |-v_x-k_0|\leq \eps,\quad |\tilde{B}(\zeta(\cdot),\cdot)|\leq \eps\quad\text{ in } \overline{R_{2\eta}}.
\end{equation}

Applying Lemma \ref{L.4.2} to $v$ shows that $\zeta'(t)=-v_x(\zeta(t),t)+\tilde{B}(\zeta(t),t)$. By Corollary \ref{C.6.1} again, we find for $t\in [-2\eta,2\eta]$,
\begin{equation}
    \label{6.7}
    \begin{aligned}
     |\zeta'(t)-k_0|&=|-v_x(\zeta(t),t)+\tilde{B}(\zeta(t),t)-k_0|\\
     &\leq |-v_x(\zeta(t),t)-k_0|+|\tilde{B}(\zeta(t),t)|\leq 2\eps.
    \end{aligned}
\end{equation}
Thus we get
\[(k_0-2\eps)(t+2\eta)\leq \zeta(t)-\zeta(-2\eta)\leq (k_0+2\eps)(t+2\eta).\] 
Set
\[\zeta_*(t):=\zeta(-2\eta)+(k_0+3\eps)(t+2\eta),\]
and then it follows that for $t\in [-2\eta,2\eta]$,
\begin{equation}
    \label{6.14}
    \begin{aligned}
     \zeta_*(t)-\zeta(t)&\geq (k_0+3\eps)(t+2\eta)-(k_0+2\eps)(t+2\eta)\\
     &\geq \eps(t+2\eta),
    \end{aligned}
\end{equation}
and
\begin{equation}\label{6.13}
\begin{aligned}
  \zeta_*(t)-\zeta(t)
&\leq (k_0+3\eps)(t+2\eta)-(k_0-2\eps)(t+2\eta)\\
&\leq 5\eta\, \eps(t+2\eta)\leq 20\eta\eps.
\end{aligned}  
\end{equation}


Now we construct a barrier for $p$ that is of the form
\[\phi(x,t):=\frac{\alpha}{\zeta(t)-x}+\frac{\beta}{\zeta_*(t)-x}\quad (\text{ with }\alpha,\beta>0)\]
where
$\beta:=\frac{k_0}{8(m+1)}$
and $\alpha$ is a constant in $ (0,\beta)$.

For abbreviation of notations, we write $\zeta,\zeta_*,\phi$ as $\zeta(t),\zeta_*(t),\phi(x,t)$ below.
We obtain in $R_{2\eta}$,
\begin{equation*}
    \begin{aligned}
    \calL_2(\phi)&\geq \frac{\alpha}{(\zeta-x)^2}\l -\zeta'-2(m-1)\frac{v}{\zeta-x}-2m v_x-2(m+1)\alpha\r\\
    &\quad +\frac{\beta}{(\zeta_*-x)^2}\l -\zeta_*'-2(m-1)\frac{v}{\zeta_*-x}-2mv_x-2(m+1)\beta\r\\
    &\quad +\frac{\alpha}{(\zeta-x)^2}\l  \tilde{B}+(m+1)(\zeta-x)\tilde{B}_x\r+\frac{\beta}{(\zeta_*-x)^2}\l \tilde{B}+(m+1)(\zeta_*-x)\tilde{B}_x\r\\
    &\quad +(2m-1)v_x\tilde{B}_{xx}-(m-1)v\tilde{B}_{xxx}.
\end{aligned}
\end{equation*}
In view of \eqref{6.10} and \eqref{6.3}, in $R_{2\eta}$
\[|\tilde{B}|\leq C\|B_x\||x|\leq C(|\zeta(t)|+2\eta)\leq C\eta \quad\text{ and }\quad\|\tilde{B}\|_{C_x^\infty}\leq C.\]
We then get
\begin{equation}
    \label{6.5}
    \begin{aligned}
    \calL_2(\phi)
    &\geq \frac{\alpha}{(\zeta-x)^2}\l -\zeta'-2(m-1)\frac{v}{\zeta-x}-2m v_x-2(m+1)\alpha-C\eta-C(\zeta-x)\r\\
    &\quad +\frac{\beta}{(\zeta_*-x)^2}\left(- \zeta_*'-2(m-1)\frac{v}{\zeta_*-x}-2mv_x-2(m+1)\beta-C\eta-C(\zeta_*-x)\right)\\
    &\quad -C(B,\|u\|_{C^1_x})\qquad \text{  in $R_{2\eta}$}.
\end{aligned}
\end{equation}

It follows \eqref{6.4} that for $(x,t)\in R_{2\eta} $, 
\[|v(x,t)|=|v(x,t)-v(\zeta(t),t)|\leq (k_0+\eps)(\zeta(t)-x),\]
which implies that
\begin{equation}
    \label{6.6}
    \left|\frac{v}{\zeta-x}-k_0\right|\leq \eps.
\end{equation}
Since $\zeta_*\geq \zeta$, we also have for such $(x,t)$,
\begin{equation}
\label{6.12}
    \frac{v}{\zeta_*-x}\leq k_0+ \eps.
\end{equation}
Next using \eqref{6.13} and $\eps<\frac{1}{20}$, we obtain
\begin{equation}
    \label{6.9}
\begin{aligned}
 \zeta_*-x\leq \zeta_*-\zeta+2\eta\leq 4\eta.
\end{aligned}   
\end{equation}

Let us apply \eqref{6.4}\eqref{6.7}\eqref{6.6}-\eqref{6.9} in \eqref{6.5} to get in $R_{2\eta}$,
\begin{align*}
    \calL_2(\phi)&\geq \frac{\alpha}{(\zeta-x)^2}\l -k_0-2\eps-2(m-1)(k_0+\eps)+2m(k_0-\eps)-2(m+1)\alpha-C\eta\r\\
    &\quad +\frac{\beta}{(\zeta_*-x)^2}\left( -k_0-3\eps-2(m-1)(k_0+\eps)+2m(k_0-\eps)-2(m+1)\beta-C\eta\right)-C\\
    &\geq \frac{\alpha}{4\eta^2}\l k_0-4m\eps-2(m+1)\alpha-C\eta\r\\
    &\quad +\frac{\beta}{4\eta^2}\left( k_0-(4m+1)\eps-2(m+1)\beta-C\eta-\frac{C\eta^2}{\beta}\right).
\end{align*}
Recall \eqref{eps} and \[\alpha<\beta=\frac{k_0}{8(m+1)}.\]
Then it is not hard to see that there exists $\sigma_1=\sigma_1(k_0,t_0,v)\leq \sigma_0$ (independent of $\alpha$) such that for all $\eta<\sigma_1$, we have $\calL_2(\phi)\geq 0$.

\medskip

Next we show $\phi\geq p=v_{xx}$ on the parabolic boundary of $R_{2\eta}$. By Corollary \ref{C.6.1}, $v\,v_{xx}\to 0$ as $(x,t)$ approaches the free boundary. And by \eqref{6.4}, $v(x,t)\geq (k_0-\eps)(\zeta-x)$ in $R_{2\eta}$. Thus we can fix $\eta\in (0,\sigma_1)$ to be small enough depending only on $k_0,t_0 $ and $v$ such that $v\,v_{xx}\leq \frac{\beta(k_0-\eps)}{1+10\eps}$ in $R_{2\eta}$, which yields
\begin{equation}\label{6.11}
    v_{xx}\leq \frac{\eps'}{(\zeta-x)}\quad\text{ in }R_{2\eta},\quad \text{ where }
\eps':=\frac{\beta }{10\eps+1}.
\end{equation}

Using \eqref{6.13} and \eqref{6.11}, we deduce for $t\in (-2\eta,2\eta)$,
\begin{align*}
    \phi(\zeta(t)-2\eta,t)&\geq\frac{\beta}{\zeta_*-\zeta+2\eta}\geq\frac{\beta}{20\eta\eps+2\eta}\\
    &\geq \frac{\eps'}{2\eta}\geq v_{xx}(\zeta(t)-2\eta,t).
\end{align*}
For $t=-2\eta$ and $x\in (\zeta(-2\eta)-2\eta,\zeta(-2\eta))$, due to \eqref{6.11} again,
\[\phi(x,-2\eta)\geq \frac{\beta}{\zeta(-2\eta)-x}\geq\frac{\beta}{2\eta}\geq v_{xx}(x,-2\eta).\]

Finally consider the right-hand side lateral boundary of $R_{2\eta}$. Due to Corollary \ref{C.6.1}, there is a neighbourhood depending on $\alpha$ (denoted as $N_\alpha$) of $\{(\zeta(t),t),\,\,|t|< 2\eta\}$ such that 
\[v\,v_{xx}\leq \alpha(k_0-\eps)\quad \text{ in }N_\alpha\cap R_{2\eta}.\]
It follows from \eqref{6.6} that $\frac{v}{\zeta-x}\geq k_0-\eps$, which implies
\[\phi\geq \frac{\alpha}{\zeta-x}\geq \frac{\alpha (k_0-\eps)}{v}\geq v_{xx}(x,t) \quad \text{ in }N_\alpha\cap R_{2\eta}.\]
Therefore by comparing $\phi$ and $v_{xx}$ in $ R_{2\eta}\backslash N_\alpha$, we get $\phi \geq v_{xx}$ in $R_{2\eta}\backslash N_\alpha$. From the above we proved $\phi \geq v_{xx}$ in $R_{2\eta}$.

Since $\eta$ is independent of $\alpha$, after passing $\alpha\to 0$, the order of $\phi,v_{xx}$ shows that
\[v_{xx}(x,t)\leq \frac{\beta}{\zeta_*-x}\quad \text{ in } R_{2\eta}.\]
By \eqref{6.14}, for $(x,t)\in R_{\eta}$, we have
$\zeta_*(t)-x\geq \eps\eta.$
We conclude with $v_{xx}(x,t)\leq \frac{\beta}{\eps\eta}$ in $R_{\eta}$.

\end{proof}
Lemma \ref{L.6.2} implies that $u_{xx}$ is bounded from above near $(r(t_0),t_0)$. Combining this with the fundamental estimate, we obtain that $|u_{xx}|$ is locally uniformly bounded near the free boundary if we have non-degeneracy.

\medskip


Now we estimate the higher derivatives of $u$ near the free boundary. As before, we consider $v$ instead of $u$. For $j\geq 1$, write $v^{(j)}:=\partial_x^j v$ and $\tilde{B}^{(j)}:=\partial_x^j\tilde{B}$.
Notice that for $j\geq 3$, $v^{(j)}$
satisfies the linear equation
\begin{align*}
    \calL_{j} (v^{(j)})&:= v^{(j)}_t-(m-1)v\, v^{(j)}_{xx}-(2+j(m-1)) v_x v^{(j)}_x-v^{(j)}_x\tilde{B}\\
    &\qquad -(j+1)v^{(j)}\tilde{B}^{(1)}-c^1\,v_{xx}\,v^{(j)}+\sum_{p=3}^{\lfloor j/2\rfloor+1}c^2_p\,v^{(p)}v^{(j+2-p)}+\sum_{p=0}^{j-1}c^3_p\,v^{(p)}\tilde{B}^{(j+1-p)}=0,
\end{align*}
where the constant $c^1$ only depends on $m,j$, and the constants $c_p^2,c_p^3$ only depend on $p,j,m$.

We have the following lemma:

\begin{lemma}\label{T.6.2}
Suppose $V,W\in C_{x,t}^\infty$ and \eqref{1.6} holds. For any $t_0>0$, let $x_0=r(t_0)$ and $v=u(x+X(x_0,t_0;t),t_0+t)$. For each integer $j\geq 2$, there exist positive constants $C_j,\eta_j$ depending on $m,d,j,t_0,V,W$ and $u$ such that $|v^{(j)}|\leq C_j$ in $R_{\eta_j}$, where $R_{\eta_j}$ is given in Lemma \ref{L.6.2}.
\end{lemma}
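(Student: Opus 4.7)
We proceed by induction on $j \geq 2$. The base case $j = 2$ combines Proposition~\ref{T.2.3} (giving $v_{xx} \geq -C$) with Lemma~\ref{L.6.2} (giving $v_{xx} \leq C$) to bound $|v_{xx}|$ on some $R_{\eta_2}$. For the inductive step, suppose $\|v^{(k)}\|_{L^\infty(R_{\eta_{j-1}})} \leq C_k$ for every $2 \leq k \leq j-1$, and set $p := v^{(j)}$.

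Rewriting $\calL_j(v^{(j)}) = 0$, the function $p$ solves the degenerate linear parabolic equation
\[
p_t - (m-1)\,v\, p_{xx} - \bigl((2 + j(m-1)) v_x + \tilde B\bigr) p_x - \bigl((j+1)\tilde B^{(1)} + c^1 v_{xx}\bigr) p = F,
\]
where $F := -\sum c_p^2 \,v^{(p)} v^{(j+2-p)} - \sum c_p^3\, v^{(p)} \tilde B^{(j+1-p)}$. The key observation is that every derivative appearing in $F$ is $v^{(k)}$ with $k \leq j-1$ (the range $3 \leq p \leq \lfloor j/2\rfloor + 1$ forces $j+2-p \leq j-1$), so by the inductive hypothesis together with \eqref{6.10} we have $|F| \leq F_0$; the coefficients of the linear part are similarly bounded by induction and Lemma~\ref{L.6.2}. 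Moreover, near the free boundary we have $v \sim k_0(\zeta(t) - x)$ and $-v_x \sim k_0 > 0$ by non-degeneracy (as in the proof of Lemma~\ref{L.6.2}).

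I will build barriers of the form $\phi^\pm(x,t) = \pm\bigl(A + B/(\zeta_*(t) - x)^{j-1}\bigr)$, where $\zeta_*$ is the auxiliary ``shifted free boundary'' from Lemma~\ref{L.6.2}. A direct computation shows that, near $\Gamma$, the transport term contributes $(2 + j(m-1))\, k_0 (j-1) B /(\zeta_* - x)^j$ (positive) to the linear operator applied to $\phi^+$, while the degenerate diffusion $-(m-1) v \,\phi^+_{xx}$ contributes at least $-(m-1)(k_0 + \eps) j (j-1) B/(\zeta_*-x)^j$ (using $v \leq (k_0 + \eps)(\zeta_* - x)$); their sum has strictly positive leading coefficient $[2k_0 - j(m-1)\eps](j-1) B / (\zeta_*-x)^j$. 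Choosing $B$ large, then $A$ and $\eta_j$ appropriately, this (together with the positive $\phi^+_t$ contribution) dominates the bounded remaining lower-order terms and $F_0$, so that $\phi^+$ is a supersolution of the linear equation. Because $\zeta_*(t) - x \geq \eps\, \eta_j$ on $R_{\eta_j}$ by \eqref{6.14}, $\phi^+$ is itself bounded there by a constant $C_j$.

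To run the comparison on $\partial R_{2\eta_j}$, I follow the $\alpha$-perturbation trick from the proof of Lemma~\ref{L.6.2}: add the singular term $\alpha/(\zeta(t) - x)^{j-1}$ to $\phi^+$ so that the barrier blows up at the true free boundary. Interior parabolic regularity (uniform parabolicity of the PDE on $\{v > 0\}$) combined with the inductive bounds gives $|v^{(j)}|$ bounded on the pieces of $\partial R_{2\eta_j}$ lying at positive distance from $\Gamma$, and this bound can be arranged below $\phi^+$ by enlarging $B$; in a small neighborhood $N_\alpha$ of the free-boundary portion of $\partial R_{2\eta_j}$, the auxiliary term $\alpha/(\zeta-x)^{j-1}$ dominates the locally bounded values of $v^{(j)}$ in $\{v > 0\}$. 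The comparison principle applied in the uniformly parabolic region $R_{2\eta_j} \setminus N_\alpha$, followed by $\alpha \to 0$, yields $v^{(j)} \leq \phi^+|_{\alpha = 0}$ in $R_{2\eta_j}$; an identical argument with $\phi^-$ gives the matching lower bound. The main obstacle is calibrating the barrier exponent so that the positive transport contribution (of order $k_0$, surviving thanks to non-degeneracy) outweighs the singular degenerate diffusion uniformly in $j$; the exponent $j - 1$ produces exactly this cancellation and makes the induction close at every step.
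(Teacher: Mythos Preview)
Your strategy—induction on $j$, with the inductive step handled by a barrier comparison for the linear degenerate equation satisfied by $p=v^{(j)}$—is exactly the paper's. The paper, following \cite{LV12}, separates the inductive step into two pieces: (a) interior Schauder (the ellipticity is $\sim v\sim k_0(\zeta-x)$, while by induction the coefficients and forcing of $\calL_{j}$ are bounded) gives the \emph{quantitative} a~priori bound $|v^{(j)}|\le C/(\zeta(t)-x)$ throughout a region $R_{\eta_{j-1}}$; (b) a barrier argument (the ``barrier transformation lemma'' of \cite{LV12}) then removes the denominator. You attempt to do (b) directly, but your boundary verification implicitly needs (a): on the bottom $\{t=-2\eta_j\}$ near the corner with $\Gamma$, your claim that interior regularity ``gives $|v^{(j)}|$ bounded on the pieces of $\partial R_{2\eta_j}$ lying at positive distance from $\Gamma$'' yields a bound that blows up as $x\to\zeta(-2\eta_j)$, and the $N_\alpha$-trick alone does not control this (the $\alpha$-dependence of $N_\alpha$ would infect the choice of $B$). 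Since $\zeta_*(-2\eta_j)=\zeta(-2\eta_j)$, your barrier on the bottom is $A+(B+\alpha)/(\zeta-x)^{j-1}$; to check it dominates $v^{(j)}$ uniformly there you need precisely the rate $|v^{(j)}|\le C/(\zeta-x)$ from step~(a). Once you state that bound, your exponent $j-1\ge 2$ beats it easily and the argument closes.

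One computational slip: $\phi^+_t=-(j-1)B\,\zeta_*'/(\zeta_*-x)^{j}$ is \emph{negative} (since $\zeta_*'=k_0+3\eps>0$), not positive as you write. The barrier still works—the leading coefficient of $\calL_j(\phi^+)$ at order $(\zeta_*-x)^{-j}$ becomes $k_0-O_j(\eps)$ rather than $2k_0-O_j(\eps)$, which remains strictly positive for small $\eps$—but $\phi^+_t$ is not the helpful term.
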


\begin{proof}

Recall that we write the right-hand side free boundary of $v$ as $\zeta$ and so $\zeta(0)=0$.

The proof proceeds by induction. By Lemma \ref{L.6.2} and the fundamental estimate $|v^{(2)}|<\infty$ in $R_{\eta}$ for some $\eta>0$. 
Suppose that for some $k\geq 2$, $|v^{(j)}|\leq C_k$ for all $j=2,3,...,k$ in $R_{\eta_k}$ for some $\eta_k>0$, and the goal is to show boundedness of $v^{(k+1)}$ in $R_{\eta_{k+1}}$ for some $\eta_{k+1}>0$.

Notice that the operator $\calL_{k+1}$ is of the form:
\begin{align*}
    \calL_{k+1}(\phi)&=\phi_t-(m-1)v\, \phi_{xx}+f_1 \phi_x+f_2\phi+f_3,
\end{align*}
where, by induction hypothesis, $f_1,f_2,f_3$ are bounded functions. This is of the same form for the cases when $B\equiv 0$. Therefore following the proof of Proposition 3.1, \cite{LV12}, there exist $\eta_{k+1}>0$ and $C_{k+1}>0$ such that
\[|v^{(k+1)}|\leq C_{k+1}\quad\text{ in }R_{\eta_{k+1}}.\]
And we can conclude.

\medskip

Let me briefly sketch the key idea used in Proposition 3.1 \cite{LV12} to prove the inductive step below.
Consider any subset $\overline{R}$ of $R_{\eta_{k}}$ such that $(x,t)\in \overline{R}$ implies $\zeta(t)-x\geq\lambda>0$ for some $\lambda>0$. By the non-degeneracy property, $v\geq c\lambda$ in $\overline{R}$ and therefore the operator $\calL_{k+1}$ is uniformly parabolic with elliptic constant $\geq c\lambda$ in $\overline{R}$. It then follows from the regularity estimate for parabolic type equation (Theorem 5.3.1 \cite{LSU}) that
$ |v^{(k+1)}|\leq \frac{C}{\lambda}$ in $\overline{R}$. This implies that 
\[|v^{(k+1)}|\leq \frac{C}{\zeta(t)-x}\quad \text{ in } R_{\eta_{k}}.\]

To remove the denominator $\frac{1}{\zeta-x}$, we apply the barrier transformation lemma (Lemmas 3.2-3.4 of \cite{LV12}) and the estimate can be improved to
\[|v^{(k+1)}|\leq C_{k+1}\quad \text{ in }R_{\eta_{k+1}}\text{ for some smaller }\eta_{k+1}>0.\]
\end{proof}

\textbf{Proof of Theorem \ref{T.1.4}.}

\medskip

 Due to Lemma \ref{L.3.2}, $B\in C_x^\infty C_t^{1,1}(\bbR\times [t_0-\eta,t_0+\eta])$. The classical parabolic regularity result yields that $u$ is $C^\infty_xC^2_t$ in $\Omega=\{(x,t)\,|\,u>0,t>0\}$. Below we need to obtain a uniform bound up to the boundary.
For any $k\geq 0$, we apply $(\frac{\partial}{\partial t})(\frac{\partial }{\partial x})^k$ to $\calL(u)=0$ where the operator $\calL$ is given in \eqref{calL}. We get that ${\phi_1}:=(\frac{\partial}{\partial t})(\frac{\partial }{\partial x})^ku$ satisfies the linear equation in $\Omega$ in the classical sense
\begin{equation}
    \label{5.10}
    (\phi_1)_t=(m-1)u(\phi_1)_{xx}+f^1_1(\phi_1)_x+f^1_2{\phi_1}+f^1_3
\end{equation}
where $f^1_1,f^1_2,f^1_3$ are linear combinations of
$\partial_x^pu,\, \partial_x^pB,\, \partial_x^qu\,\partial_x^p\partial_t B $ 
with $q+p\leq k+1$. By taking $\eta\in (0,\frac{t_0}{2})$ to be small, we can assume that $u$ is strictly positive in $N_{\eta}(t_0)$ when $(x,t)$ is away from the right-hand side free boundary. Since $u$ is smooth in the region where it is strictly positive, it then follows from Lemmas \ref{L.3.1}, \ref{T.6.2} that $u$ is Lipschitz continuous in time and spatially smooth uniformly in $N_{\eta}(t_0)$.

Due to Lemma \ref{L.3.2}, $B\in C_x^\infty C_t^{1,1}(\bbR\times [t_0-\eta,t_0+\eta])$.
Therefore in $N_{\eta}(t_0)$, the right-hand side of \eqref{5.10} is uniformly bounded  which implies that 
$\phi_1$ is Lipschitz continuous in time. We deduce that 
$u$ is uniformly $C_x^\infty{C}_t^{1,1}$ in $N_{\eta}(t_0)$.

\medskip

Next we can apply $(\frac{\partial}{\partial t})^2(\frac{\partial }{\partial t})^k$ to $\calL(u)=0$ to get that ${\phi_2}:=(\frac{\partial}{\partial t})^2(\frac{\partial }{\partial x})^ku$ satisfies 
\[(\phi_2)_t=(m-1)u(\phi_2)_{xx}+f^2_1(\phi_2)_x+f^2_2{\phi_2}+f^2_3\quad\text{ in $\Omega$}\]
where $f^2_1,f^2_2,f^2_3$ are bounded functions, due to the established regularities for $B,u$.
Then similarly we obtain that $u$ is $C_x^\infty{C}_t^{2,1}$ uniformly in $N_{\eta}(t_0)$. However we are not able to proceed further with this argument since $B$ is only known to be $C^{1,1}$ in time.

\medskip
Using this regularity of $u$, we claim that the free boundary is a $C^{2,1}$ function of $t$. Indeed, let us write $r(t)$ as the right-hand side free boundary of $u$ and then $u(r(t),t)=0$. 
Since it was proved that $r(t)$ is Lipschitz continuous, after differentiating the above equality by $t$, we get
\[u_x(r(t),t)r'(t)+u_{t}(r(t),t)=0.\]
By non-degeneracy, $u_x$ is strictly positive near the free boundary. Also since $u_x(r(t),t),u_{t}(r(t),t)$ are Lipschitz continuous, $r$ is uniformly $C^{1,1}$ in $(t_0-\eta,t_0+\eta)$. 

Then by differentiating the above equality one more time, we find
\[u_{x}(r(t),t)r''(t)+u_{xx}(r(t),t)|r'(t)|^2+u_{xt}(r(t),t)r'(t)+u_{tt}(r(t),t)=0.\]
It follows from the non-degeneracy property and the regularity established for $u$ that $r''(t)$ is Lipschitz continuous. Thus we proved that the free boundary $r(t)$ is a $C^{2,1}$ function locally uniformly for positive time.

\hfill $\Box$

\medskip

\textbf{Proof of Theorem \ref{T.1.5}.}

\medskip

In view of Lemma \ref{L.4.1} and the assumption that $\Omega_0$ is a finite interval, there are functions $l(t),r(t)$ such that $\Omega_t=(l(t),r(t))$ for all $t>0$.
The first part of the statement follows from Theorem \ref{T.1.4}.

To prove the second part, we need to improve the regularity of $B$ in time using the assumptions that $W\equiv 0$ or $\frac{1}{m-1}\in\bbN$.
For any $T>t_0>0$, let us always restrict $t$ to $(t_0,T)$ in this proof.
For induction, suppose that for some $p\geq 2$, $u$ is uniformly $C_x^\infty {C}_t^{p,1}$ in $\Omega\cap (\bbR\times\{t\in (t_0,T)\})$ and $l(\cdot),r(\cdot)\in C^{p,1}((t_0,T))$.
If $W\equiv 0$, by the assumption $B$ is smooth in space and time. In the case when $W\neq 0$ and $\frac{1}{m-1}\in\bbN$, by the assumption and inductive hypothesis, $u^{\frac{1}{m-1}}$ is bounded in $C_x^k{C}_t^{p,1}$ norms for any $k\geq 0$ in $\Omega\cap (\bbR\times \{t\in (t_0,T)\})$. Thus
\[B=-V-W*\varrho=-V-\l\frac{m-1}{m} \r^\frac{1}{m-1}\int_{l(t)}^{r(t)}W(x-y,t)u(y,t)^\frac{1}{m-1}\,dy\]
is also bounded uniformly in $C_x^k{C}_t^{p,1}(\bbR\times [t_0,T])$ norms.

Then applying the differential operator $(\frac{\partial}{\partial t})^{p+1}(\frac{\partial}{\partial x})^k$ to $\calL(u)=0$, we obtain that \[\phi_{p+1}:=(\frac{\partial}{\partial x})^k(\frac{\partial}{\partial t})^{p+1} u\] satisfies the following equality almost everywhere in $\Omega$,
\[(\phi_{p+1})_t=(m-1)u(\phi_{p+1})_{xx}+f^{p+1}_1(\phi_{p+1})_x+f^{p+1}_2\phi_{p+1}+f^{p+1}_3\]
where $f^{p+1}_1,f^{p+1}_2,f^{p+1}_3$ are uniformly Lipschitz continuous in $\Omega\cap(\bbR\times\{t\in (t_0,T)\})$. Therefore, by the equality and the inductive hypothesis, $(\phi_{p+1})_t$ is also Lipschitz in time. 

To conclude, we proved that when $W\equiv 0$ or $\frac{1}{m-1}\in\bbN$, $u$ is smooth in $\Omega\cap(\bbR\times\{t\in (t_0,T)\})$ uniformly up to the free boundary. 

\hfill $\Box$

\medskip

Finally, we show that permanent waiting time is possible with the appearance of either $V$ or $W$.

\medskip

\textbf{Proof of Theorem \ref{T.1.3}.}

\medskip


We will present two explicit examples of stationary solutions which indicate the possibility of permanent waiting time.

First let us consider the domain to be $\bbT$. Set 
\[\Phi:=-4\cos (2\pi x),\quad \varrho:=\sin (2\pi x)+1.\]
Then
\begin{align*}
    \Phi*\varrho(x)&=-4\int_0^1 \cos(2\pi(x-y))(\sin(2\pi y)+1)dy\\
    &=-2\int_0^1 \left[\sin(2\pi x)-\sin(2\pi x-4\pi y)+2 \cos(2\pi(x-y))\right] dy\\
    &=-2\sin(2\pi x).
\end{align*}
Therefore 
\[2\varrho+\Phi*\varrho=2.\]
Now if we pick $W:=\Phi_x$, then
\[(\varrho^2)_{xx}+(\varrho W*\varrho)_x=\left(\varrho\, (2\varrho+\Phi*\varrho)_x\right)_x=0.\]
Hence this pair of $\varrho, W$ satisfy 
\begin{equation*}
    (\varrho^2)_{xx}+(\varrho W*\varrho)_x=0 \quad \text{ in }\mathbb{T},
\end{equation*}
and clearly $\varrho(\frac{3}{4})=0$.

\medskip

Next we present the second example. Take any open subset of $\mathbb{R}$ and write it as a union of disjoint open intervals: $\bigcup_{i\in\bbN} I_i$. For each $i$, let $\Psi_i$ be a smooth function such that $\Psi_i<0$ in $I_i$ and $\Psi_i=0$ outside $I_i$.
Next set 
\[\varrho_i:=\frac{m-1}{m}(-\Psi_i)^{\frac{1}{m-1}},\quad V_i:=(\Psi_i)_x,\]
and then
\[(\varrho_i^m)_{xx}+(\varrho_i V_i)_x=\left(\varrho_i \left(\frac{m}{m-1}\varrho_i^{m-1}+\Psi_i\right)_x\right)_x=0.\]
Therefore $\varrho_*:=\Sigma_{i\in\bbN}\, \varrho_i$ and $V:=\Sigma_{i\in\bbN} V_i$ is a pair of functions satisfying 
\begin{equation*}
    \label{1.4}
    (\varrho_*^m)_{xx}+(\varrho_* V)_x=0 \quad \text{ in }\mathbb{R},
\end{equation*}
and $\{\varrho_*=0\}=(\bigcup_{i\in\bbN} I_i)^c$.

Let $\varrho_0$ be a function such that $\varrho_0\leq \varrho_*$ and $\{\varrho_0>0\}=\{\varrho_*>0\}$. Then by comparison principle, the solution $\varrho(x,t)$ to \eqref{1.1} with initial data $\varrho_0$ satisfies $\varrho(\cdot,t)\leq \varrho_*(\cdot)$ for all $t\geq 0$. Hence $\Gamma_t(\varrho)\subseteq \overline{\{\varrho_*>0\}}$. Notice that by the construction, $V=0$ at $\Gamma_0=\bigcup_{i\in\bbN}\,(\partial I_i)$. It follows from Lemma \ref{L.4.1} that the free boundary is non-contracting along streamlines. Hence $\Gamma_t=\Gamma_0$ for all $t>0$.

\medskip

\appendix

\section{Proof of Proposition \ref{P.1.2}}

Using that $V,W\in C^3_xC^0_t(\bbR\times [0,\infty))$, $\varrho\geq 0$, and $\|\varrho(\cdot,t)\|_{L^1(\bbR)}=\|\varrho_0\|_{L^1(\bbR)}$ from the equation, we get that $B=B(x,t)$ is a vector field uniformly bounded in $C^3_xC^0_t(\bbR\times [0,\infty))$.
Note that the proof of Theorem 1.2 \cite{kim2018fb} only used $C^3_xC^0_t$ regularity of the vector field, and so the theorem yields either of the following holds (using the notation $X(t):=X(x_0,t_0;t)$)
\begin{itemize}
    \item[\normalfont{(1)} ] $X(-s)\in\Gamma$ for all $s\in [0,t_0]$ with $\Gamma:=\bigcup_{t>0}(\Gamma_t\times\{t\})$;
    \medskip
    \item[\normalfont{(2)} ] there exist $C,\beta>1$ and $h>0$ such that for $s\in(0,h)$, $u(x,t_0-s)=0$ if $|x-X(-s)|\leq Cs^\beta$, and $u(x,t_0+s)>0$ if $|x-X(s)|\leq Cs^\beta$.
\end{itemize}
Since the right-hand side free boundary is $r(t)$, (1) clearly implies (i) after taking $s=t_0-t$. Also we see that if $r(t)$ is of type (1), then $r(s)$ is of type (1) for all $s\in[0,t]$. 
Thus there exists $t_1\in [-1,\infty]$, such that $r(t)$ is of type (1) for all $t\in [0, t_1]$, and it is of type (2) for all $t>t_1$. 

Now suppose (2) holds at $t_0>0$, we get for all $s\in (0,h]$ (for some small $h>0$ depending on $t_0$) such that $r(t_0-s)<X(-s)$ and $r(t_0+s)>X(s)$. We are going to show (ii). Since all $r(t)$ with $t\geq t_0$ are of type (2) with $h$ depending only on $t_0$ and universal constants by \cite{kim2018fb}, we use the property that 
\[
X(r(t_0+h),t_0+h;s')>X(X(h),t_0+h;s')= X(x_0,t_0;h+s') \quad\text{ for all $s'>0$}
\]
to conclude that $r(t_0+h+s')>X(r(t_0+h),t_0+h;s')> X(h+s')$ for all $s\in(0,h]$. Therefore, by iteration, we obtain
$r(t_0+s)>X(s)$ for all $s>0$ which yields the first part of (ii) with $s=t-t_0$. For the other part, assume for contradiction that there is $s\in (0,t_0]$ such that $r(t_0-s)\geq X(-s)$. It follows from (i) and the first part of (ii) (or, alternatively, Lemma \ref{L.4.1}) that for any $z\in (0,s)$,
\[
r(t_0-z)\geq X(X(-s),t_0-s;s-z)=X(-z),
\]
which cannot happen because $r(t_0)$ is of type (2). 
Overall we obtained (ii), which concludes the proof.



\section{Proof of Proposition \ref{T.2.3}}

By Theorem \ref{exitence boundedness}, $\varrho\in L^\infty(\bbR\times[0,\infty))$ and so does $u$.
Recall $B$ in \eqref{B} and since $V,W$ are smooth, $B$ is smooth in space. Theorem 1.1 in \cite{ zhang2017regularity} implies that the solution is H\"{o}lder continuous for $t>0$ and so $B=B(x,t)$ is also H\"{o}lder continuous for $t>0$. 


We proceed by considering a set of approximated solutions $u_k$ with $k\in\bbZ^+$.
Take smooth approximations $B_k$ of $B$ and smooth non-negative approximations $u_{0,k}$ of $u_0$. Let $u_k$ be the solution to \eqref{1.2} with vector field $B_k$ and initial data $u_{0,k}+\frac{1}{k}$. By comparison principle, $u_k$ is positive, and so \eqref{1.2} is locally uniformly parabolic for all finite time. Then by the standard parabolic theory, $u_k$ is smooth. Parallel to the proof of Lemma 9.5 \cite{book}, we can show that $u_k$ are positive and smooth, and $u_k\to u$ locally uniformly. Therefore to prove the proposition, it suffices to consider positive and smooth $u$.

Set $p:=u_{xx}$, and then by differentiating \eqref{1.2} twice, we get
\begin{align*}
    p_t&=(m-1)u  p_{xx}+2mu_xp_x+(m+1)p^2\\
    &-p_x{B}-(m+1)p{B_x}-(2m-1)u_x B_{xx}+(m-1)uB_{xxx}
\end{align*}
By Young's inequality, we have
\begin{align*}
    \left|(m+1)pB_{x}\right|&\leq {m}\,p^2+Cm \\
    \left|(2m-1)u_xB_{xx}\right|&\leq {m}\,u_x^2+Cm,\\
    \left|(m-1)u B_{xxx}\right|
    &\leq Cm.
\end{align*}
Thus we obtain 
\[p_t-(m-1)u  p_{xx}-2m\,u_x p_x-p^2+p_x{B}+m\,u_x^2+Cm\geq 0.\]
Viewing $u$ as a known function, we can write the above quasilinear parabolic operator of $p$ as $\calL_0(p)$, and thus $\calL_0(p)\geq 0$.

Take $w=-\frac{1}{t+\tau}+u-C_1$ for $\tau>0$ and $C_1\geq \|u\|_\infty$ to be determined later. Then
\[\calL_0(w)=\frac{1}{(t+\tau)^2}+u_t-(m-1)u u_{xx}-m\, u_x^2-\l-\frac{1}{t+\tau}+u-C_1\r^2+u_x{B}+Cm.\]
Now we use the equation \eqref{1.2} and the fact that $u, B_x$ are bounded to get for some $C>0$ that
\begin{align*}
    \calL_0(w)&\leq \frac{1}{(t+\tau)^2}-\left(m-1\right) u_x^2-\l-\frac{1}{t+\tau}+u-C_1\r^2+Cm\\
    &\leq \frac{1}{(t+\tau)^2}-\l\frac{1}{t+\tau}+C_1-u\r^2+Cm\\
    &\leq -(C_1-u)^2+Cm\leq 0,
\end{align*}
if $C_1$ is large enough depending only on $m,\|u\|_\infty,\|B\|_{C_x^3C_t^0}$. 
Therefore $\calL_0(w)\leq 0\leq \calL_0(p)$. And we know $p(\cdot,0)\leq w(\cdot,0)$ since $w(\cdot,0)\to-\infty$ as  $\tau\to 0$. By comparison, we have
\[ u_{xx}=p\geq w \geq -\frac{1}{t}-C_1.\]

\medskip

\section{Proof of Lemma \ref{L.2.2}}

By Theorem \ref{exitence boundedness}, the solution is uniformly bounded.
Denote 
\begin{equation}\label{B.0}
    M:=\|B\|_\infty+\|B_x\|_\infty+\|u\|_\infty.
\end{equation}
Suppose $u_0$ is supported in $(-\infty,R)$. 
Take
\[\alpha:=(m-1)M,\quad  \tau=\frac{1}{\alpha},\quad C_1=(e+1)M\tau+1.\]
Let us prove by induction that 
\begin{equation}\label{B.1}
    u(\cdot,t)\text{ is supported in }(-\infty, R+C_1n] \text{ if }t\in [0, n\tau].
\end{equation}

When $n=0$, \eqref{B.1} holds by the assumption. Suppose \eqref{B.1} holds with $n=k$ for some $k \in \mathbb{N}$. Because $u(\cdot,k\tau)$ is supported in $(-\infty,R+C_1k]$ and $u$ is bounded by $M$, then
\[\phi(x,t):= e^{\alpha t}M(R+C_1k+(e+1)M t+1-x)_+\]
satisfies that
\[\phi(\cdot,0)\geq u(\cdot,k\tau)\quad\text{ on }\mathbb{R}.\]

Using \eqref{B.0}, direct computation yields that in the positive set of $\phi$,
\begin{align*}
&    \phi_t-(m-1)\phi\phi_{xx}-|\phi_x|^2+\phi_xB(\cdot,\cdot+k\tau)+(m-1)\phi B_x(\cdot,\cdot+k\tau)\\
\geq\, &\alpha \phi+e^{\alpha t}(e+1)M^2-e^{2\alpha t}M^2-e^{\alpha t}M^2 -(m-1)M\phi\\
\geq\,& 0,
\end{align*}
if $t\in [0,\tau]=[0,\frac{1}{\alpha}]$. Since $\phi$ is Lipschitz continuous, Lemma \ref{L.2.4} implies that $\phi$ is a supersolution for $t\in [0,\tau]$. Thus it follows from the comparison principle that
\[u(\cdot,\cdot+k\tau)\leq \phi(\cdot,\cdot) \quad \text{ in }\mathbb{R}\times [0,\tau].\]

Since for $t\in [0,\tau]$, the right end-point of the support of $\phi$ is bounded from above by 
\[R+C_1k+(e+1)M \tau+1=R+C_1(k+1).\]
We obtain that $u(\cdot,t)$ is supported inside $(-\infty,R+C_1(k+1)]$ for $t\leq (k+1)\tau$. By induction, we established \eqref{B.1}.

Similarly we can get a lower bound on the left end-point of the support of $u$.
We conclude that there exists $C>0$ depending only on $u_0$ and $\|B\|_{C_x^1C^0_t}$ such that $u(\cdot,t)$ is supported in $(-C(1+t),C(1+t))$.

\medskip

\section{Proof of Corollary \ref{C.6.1}}

First we prove that $u_x$ is continuous in $\Omega$ up to a free boundary point $(r(t_0),t_0)$ with $t_0>0$. By Theorem \ref{T.6.1}, $r'(t)$ is continuous. Then using the notation \eqref{5.20} and Lemma \ref{L.4.2} yields $r'(t)=-u_x(r(t),t)+B(r(t),t)$. Therefore
$u_x(r(t),t)$ is continuous in $t$.
In view of \eqref{B.2}, we obtain
\[\limsup_{(x,t)\to (r(t_0),t_0)}u_x(x,t)\leq u_x(r(t_0),t_0).\]

For the other direction, we prove by contradiction. Denote $x_0:=r(t_0)$ and $k_0=-u_x(x_0,t_0)$. Suppose there is a sequence of $(x_n,t_n)\in\Omega$ and $(x_n,t_n)\to (x_0,t_0)$ as $n\to \infty$ such that for some $\delta>0$,
\[u_x(x_n,t_n)\geq -k_0+\delta.\]
Then by the fundamental estimate,  for any $x<x_n$,
\[u(x,t_n)\geq u(x_n,t_n)+(-k_0+\delta)(x-x_n)-C(x-x_n)^2.\]
After passing $n\to \infty$, we get
\[u(x,t_0)\geq (-k_0+\delta)(x-x_0)-C(x-x_0)^2\]
which is impossible since $-u_x(x,t_0)\to k_0$ as $x\to x_0$ (due to Lemma \ref{L.4.2}).
The continuity of $u_x$ at $(x_0,t_0)$ follows.

Now we only need to show the continuity of $u_t$ at the free boundary, because after that the continuity of $u\, u_{xx}$ follows due to \eqref{1.2}. 
By the equation,
\[
\liminf_{(x,t)\to(x_0,t_0)}u_t=\liminf_{(x,t)\to(x_0,t_0)}((m-1)u\,u_{xx}+|u_x|^2-u_xB-(m-1)uB_x) \geq k_0^2+k_0B(x_0,t_0),
\]
where in the inequality we used $u(x_0,t_0)=0$, $u_{xx}\geq -C$ and $\lim_{(x,t)\to (x_0,t_0)}u_x(x,t)=-k_0$. This proves one side of the desired continuity of $u_t$.

Based on non-degeneracy, one can argue as in Lemma 4.2 of \cite{caffarelli1979regularity} with the help of Schauder estimates that for some $\eta>0$ and $C>0$,
\[|u\, u_{tt}|\leq C\quad\text{ in }N_\eta(t_0)\]
where $N_\eta(t_0)$ is defined in \eqref{Neta}.
Now suppose for contradiction that there is a sequence $(x_n,t_n)\to(x_0,t_0)$ such that for some $\delta>0$,
\[u_t(x_n,t_n) \geq k_0^2+k_0B(x_0,t_0)+\delta.\]

Denote $\eps_n:=r(t_n)-x_n$ which converges to $0$ as $n\to \infty$.
Next for any $\theta\in (0,1)$,
\begin{align*}
    u(x_n,t_n+\theta\eps_n)&=u(x_n,t_n)+u_t(x_n,t_n)\theta\eps_n+\frac{1}{2}u_{tt}(x_n,\xi)(\theta\eps_n)^2
\end{align*}
where $\xi\in (t_n-\theta\eps_n,t_n+\theta\eps_n)$. 
Since $r$ is Lipschitz continuous, there exists $\theta_0\in (0,1)$ independent of $n$ such that for all $\theta\leq\theta_0$,
\[r(\zeta)-x_n\geq r(t_n)-x_n-C\theta\eps_n=\frac{\eps_n}{2}.\]
By non-degeneracy, there exists $c>0$ such that
\[u(x_n,\xi)\geq 2c(r(\xi)-x_n)\geq c(r(t_n)-x_n)=c\eps_n\]
and so $|u_{tt}(x_n,\xi)|\leq \frac{C}{c\eps_n}$.

Using the notation $k_n:=-u_x(r(t_n),t_n)$,
the fundamental estimate implies
\[u(x_n,t_n)    \geq k_n\eps_n-C\eps_n^2.\]
Then
\begin{equation}\label{C.0}
\begin{aligned}
    u(x_n,t_n+\theta\eps_n)
&    \geq u(x_n,t_n)+(k_0^2+k_0B(x_0,t_0)+\delta)\theta\eps_n-\frac{C(\theta\eps_n)^2}{2c\eps_n}\\
&    \geq k_n\eps_n-C\eps_n^2+(k_0^2+k_0B(x_0,t_0)+\delta)\theta\eps_n-C'\theta^2\eps_n\\
&\geq k_n\eps_n+(k_0^2+k_0B(x_0,t_0))\theta\eps_n+\frac{\delta\theta\eps_n}{2}
\end{aligned}
\end{equation}
if $\eps_n\leq \frac{\delta\theta}{4C}$ and $\theta\leq \frac{\delta}{4C'}$. Let us fix $\theta$ to be $\min\{\theta_0,\frac{\delta}{4C'}\}$.

By continuity of $u_x$ on the free boundary, there is $k_n'$ such that $k_n'\to k_0$ and
\[u(x_n,t_n+\theta\eps_n)\leq k_n'(r(t_n+\theta\eps_n)-x_n)=k_n'(r(t_n+\theta\eps_n)-r(t_n)+\eps_n).\]
Since $r'(t_0)=k_0+B(x_0,t_0)$ and $r'$ is continuous, then
\[r(t_n+\theta\eps_n)=r(t_n)+(k_0+B(x_0,t_0))\theta\eps_n+o(\eps_n).\]
We obtain
\begin{equation}\label{C.1}
    u(x_n,t_n+\theta\eps_n)\leq k_n'(k_0+B(x_0,t_0))\theta\eps_n+k_n'\eps_n+o(\eps_n).
\end{equation}

Combining \eqref{C.0} and \eqref{C.1} shows
\[(k_n'-k_n)(\eps_n+(k_0+B(x_0,t_0)\theta\eps_n))+o(\eps_n)\geq \frac{\delta\theta\eps_n}{2}\]
and then we get
\[(k_n'-k_n)(1+(k_0+B(x_0,t_0)\theta))+o(1)\geq \frac{\delta\theta}{2},\]
which is impossible after sending $n\to \infty$.
Therefore we proved 
\[\lim_{n\to \infty} u_t(x_n,t_n)= k_0^2+k_0B(x_0,t_0).\]

\bigskip


\begin{thebibliography}{10}



\bibitem{alt1983quasilinear}
H.~W. Alt and S. Luckhaus,
\newblock Quasilinear elliptic-parabolic differential equations.
\newblock {\em Math. Z.}, 183(3):311--341, 1983.

\bibitem{ambrosio2008gradient}
L. Ambrosio, N. Gigli and G. Savar{\'e},
\newblock {\em Gradient flows: in metric spaces and in the space of probability
  measures}.
\newblock Springer Science \& Business Media, 2008.

\bibitem{LV1}
S. Angenent,
\newblock Analyticity of the interface of the porous media equation after the
  waiting time.
\newblock {\em 	
Proc. Amer. Math. Soc.},
  102(2):329--336, 1988.

\bibitem{fundamentalest}
D.~G. Aronson and P. B{\'e}nilan,
\newblock R{\'e}gularit{\'e} des solutions de l'{\'e}quation des milieux poreux
  dans $\bbR^n$.
\newblock {\em C. R. Acad. Sci. Paris S{\'e}r. 1}, 288(2):103--105, 1979.

\bibitem{LV12}
D.~G. Aronson and J.~L. V{\'a}zquez.
\newblock Eventual $C^\infty$-regularity and concavity for flows in
  one-dimensional porous media.
\newblock {\em Arch. Ration. Mech. Anal.}, 99(4):329--348, 1987.

\bibitem{bedrossian2011local}
J. Bedrossian, N. Rodr{\'\i}guez and A.~L. Bertozzi.
\newblock Local and global well-posedness for aggregation equations and
  patlak--keller--segel models with degenerate diffusion.
\newblock {\em Nonlinearity}, 24(6):1683, 2011.






\bibitem{bertozzi2009existence}
A.~L. Bertozzi and D. Slepcev.
\newblock Existence and uniqueness of solutions to an aggregation equation with
  degenerate diffusion.
\newblock {\em Commun. Pure Appl. Anal.}, 9(6):1617, 2009.



\bibitem{TBL}
A.~ L.~Bertozzi, C. M.~Topaz and M.~A. Lewis,
\newblock A nonlocal continuum model for biological aggregation.
\newblock {\em Bull. Math. Biol.}, 68(7):1601--1623, 2006.

\bibitem{BGHP}
M.~Bertsch, M.~E.~Gurtin, D.~Hilhorst and L.~A.~Peletier,
\newblock On interacting populations that disperse to avoid crowding: The
  effect of a sedentary colony.
\newblock {\em J. Math. Biol.}, 19(1):1--12, 1984.

\bibitem{bertsch}
M. Bertsch and D. Hilhorst,
\newblock A density dependent diffusion equation in population dynamics:
  stabilization to equilibrium.
\newblock {\em SIAM J. Math. Anal.}, 17(4):863--883, 1986.

\bibitem{blanchet2009critical}
A. Blanchet, J.~A. Carrillo and P. Lauren{\c{c}}ot.
\newblock Critical mass for a patlak--keller--segel model with degenerate
  diffusion in higher dimensions.
\newblock {\em Calc. Var. Partial Differential Equations},
  35(2):133--168, 2009.

\bibitem{caffarelli1979regularity}
L.~A. Caffarelli and A. Friedman.
\newblock Regularity of the free boundary for the one-dimensional flow of gas
  in a porous medium.
\newblock {\em Amer. J. Math.}, 101(6):1193--1218, 1979.

\bibitem{CFregularity}
L.~A. Caffarelli and A. Friedman,
\newblock Regularity of the free boundary of a gas flow in an $n$-dimensional
  porous medium.
\newblock {\em Indiana Univ. Math. J.}, 29(3):361--391, 1980.


\bibitem{CVWlipschitz}
L.~A. Caffarelli, J.~L. V{\'a}zquez, and N.~I. Wolanski,
\newblock Lipschitz continuity of solutions and interfaces of the
  $n$-dimensional porous medium equation.
\newblock {\em Indiana Univ. Math. J.}, 36(2):373--401, 1987.

\bibitem{C1alpha}
L.~A. Caffarelli and N.~I. Wolanski,
\newblock $C^{1,\alpha}$ regularity of the free boundary for the $n$-dimensional
  porous media equation.
\newblock {\em Comm. Pure Appl. Math.}, 43(7):885--902,
  1990.
  
  
  \bibitem{carrillo2019aggregation}
J.~A. Carrillo, K. Craig and Y. Yao,
\newblock Aggregation-diffusion equations: dynamics, asymptotics, and singular
  limits.
\newblock {\em Active Particles, Volume 2}, pages 65--108. Springer, 2019.

\bibitem{carrillo2019}
J.~A. Carrillo and R.~S. Gvalani,
\newblock Phase transitions for nonlinear nonlocal aggregation-diffusion
  equations.
\newblock {\em Comm. Math. Phys.},  382(1):485-545, 2021.

\bibitem{carrillo2001entropy}
J.~A. Carrillo, A. J{\"u}ngel, P.~A. Markowich, G. Toscani
  and A. Unterreiter,
\newblock Entropy dissipation methods for degenerate parabolic problems and
  generalized sobolev inequalities.
\newblock {\em Monatsh. Math.}, 133(1):1--82, 2001.



\bibitem{chayes2013aggregation}
L. Chayes, I. Kim and Y. Yao,
\newblock An aggregation equation with degenerate diffusion: Qualitative
  property of solutions.
\newblock {\em SIAM J. Math. Anal.}, 45(5):2995--3018, 2013.



\bibitem{di1982continuity}
E.~DiBenedetto,
\newblock Continuity of weak solutions to certain singular parabolic equations.
\newblock {\em Ann. Mat. Pura Appl.}, 130(1):131--176, 1982.

\bibitem{dib83}
E.~DiBenedetto,
\newblock Continuity of weak solutions to a general porous medium equation.
\newblock {\em Indiana Univ. Math. J.}, 32(1):83--118, 1983.


\bibitem{LV30}
K.~H{\"o}llig and H.~O. Kreiss,
\newblock $C^\infty$-regularity for the porous medium equation.
\newblock {\em Math. Z.,}, 192(2):217--224, 1986.

\bibitem{hwang2019continuity}
S. Hwang and Y.~P. Zhang,
\newblock Continuity results for degenerate diffusion equations with 
  $ L^{p}_t L^{q}_{x}  $ drifts.
\newblock {\em Nonlinear Anal.}, 211:112413, 2021.


  
\bibitem{kienzler2018flatness}
C. Kienzler, H. Koch and J.~L. V{\'a}zquez,
\newblock Flatness implies smoothness for solutions of the porous medium
  equation.
\newblock {\em Calc. Var. Partial Differential Equations},
  57(1):18, 2018.

\bibitem{kim2018fb}
I.~C. Kim and Y.~P. Zhang,
\newblock Porous medium equation with a drift: Free boundary regularity.
\newblock {\em Arch. Ration. Mech. Anal.}, 1--52, 2021.

\bibitem{zhang2017regularity}
I.~C. Kim and Y.~P. Zhang,
\newblock Regularity properties of degenerate diffusion equations with drifts.
\newblock {\em SIAM J. Math. Anal.}, 50(4):4371--4406, 2018.

\bibitem{kimlei}
I.~C. Kim and H.~K. Lei,
\newblock Degenerate diffusion with a drift potential: A viscosity solutions
  approach.
\newblock {\em Discrete Contin. Dyn. Syst.}, 27(2):767--786, 2010.

\bibitem{Knerr}
B.~F. Knerr,
\newblock The porous medium equation in one dimension.
\newblock {\em Trans. Amer. Math. Soc.},
  234(2):381--415, 1977.

\bibitem{koch}
H. Koch,
\newblock {\em Non-Euclidean singular integrals and the porous medium
  equation}.
\newblock PhD thesis, Verlag nicht ermittelbar, 1998.

\bibitem{LSU}
O.~A. Lady{\v{z}}enskaja, V.~A. Solonnikov, and N.~N.
  Ural'ceva.
\newblock {\em Linear and quasi-linear equations of parabolic type},
\newblock American Mathematical Soc., 1988.

\bibitem{LV}
K.~A. Lee and J.~L.~V{\'a}zquez,
\newblock Geometrical properties of solutions of the porous medium equation for
  large times.
\newblock {\em Indiana Univ. Math. J.}, pages 52(4):991--1016, 2003.

\bibitem{numerics}
L. Monsaingeon,
\newblock Numerical investigation of the free boundary regularity for a
  degenerate advection-diffusion problem.
\newblock {\em Interfaces Free Bound.}, 19(3):371--391, 2017.

\bibitem{traveling}
L. Monsaingeon, A. Novikov and J.-M. Roquejoffre,
\newblock Traveling wave solutions of advection--diffusion equations with
  nonlinear diffusion.
\newblock {\em Ann. Inst. H. Poincar{\'e} Anal. Non Lin{\'e}aire}, 30(4):705--735, 2013.


\bibitem{book}
J.~L. V{\'a}zquez,
\newblock {\em The porous medium equation: mathematical theory}.
\newblock Oxford University Press, 2007.


\bibitem{zhang2018continuity}
Y.~P. Zhang,
\newblock On continuity equations in space-time domains.
\newblock {\em Discrete Contin. Dyn. Syst.}, 38(10):4837--4873,
  2018.

\bibitem{zhang2018class}
Y.~P. Zhang,
\newblock On a class of diffusion-aggregation equations.
\newblock {\em Discrete Contin. Dyn. Syst.}, 40(2):907, 2020.

\end{thebibliography}


\end{document}